\renewcommand\a{\alpha}
\renewcommand\b{\beta}
\newcommand\g{\gamma}
\renewcommand\d{\delta}
\newcommand\la{\lambda}
\newcommand\e{\eta}
\renewcommand\th{\theta}
\newcommand\io{\iota}
\newcommand\s{\sigma}
\newcommand\x{\chi}
\newcommand\f{\phi}
\newcommand\vf{\varphi}
\renewcommand\t{\tau}
\renewcommand\r{\rho}
\newcommand\Om{\Omega}
\newcommand\w{\omega}
\newcommand\vD{\varDelta}
\newcommand\F{\Phi}
\newcommand\vL{\varLambda}
\newcommand\vth{\vartheta}
\newcommand\vT{\varTheta}
\newcommand\vG{\varGamma}
\newcommand\ve{\varepsilon}
\newcommand{\ZZ}{\mathbb Z}
\newcommand\Fq{{\mathbf F}_q}
\newcommand\Ql{\bar{\mathbf Q}_l}
\newcommand\BP{\mathbf P}
\newcommand\BQ{\mathbf Q}
\newcommand\BF{\mathbf F}
\newcommand\BC{\mathbf C}
\newcommand\BZ{\mathbf Z}
\newcommand\BH{\mathbf H}
\newcommand\Bm{\mathbf m}
\newcommand\Bk{\mathbf k}
\newcommand\Bla{\boldsymbol\lambda}
\newcommand\Bmu{\boldsymbol\mu}
\newcommand\CA{\mathcal{A}}
\newcommand\CB{\mathcal{B}}
\newcommand\ZC{\mathcal{C}}
\newcommand\CH{\mathcal{H}}
\newcommand\CE{\mathcal{E}}
\newcommand\CL{\mathcal{L}}
\newcommand\DD{\mathcal{D}}
\newcommand\CM{\mathcal{M}}
\newcommand\CO{\mathcal{O}}
\newcommand\CP{\mathcal{P}}
\newcommand\CU{\mathcal{U}}
\newcommand\CV{\mathcal{V}}
\newcommand\CZ{ \mathcal{Z}}
\newcommand\CX{ \mathcal{X}}
\newcommand\CY{ \mathcal{Y}}
\newcommand\CW{ \mathcal{W}}
\newcommand\Fa{\mathfrak a}
\newcommand\Fg{\mathfrak g}
\newcommand\Fl{\mathfrak l}
\newcommand\iv{^{-1}}
\newcommand\wh{\widehat}
\newcommand\wt{\widetilde}
\newcommand\wg{^{\wedge}}
\newcommand\ol{\overline}
\newcommand\hra{\hookrightarrow}
\newcommand\IC{\operatorname{IC}}
\newcommand\Hom{\operatorname{Hom}}
\newcommand\End{\operatorname{End}}
\newcommand\Aut{\operatorname{Aut}}
\newcommand\Ind{\operatorname{Ind}}
\newcommand\supp{\operatorname{supp}\,}
\newcommand\Lie{\operatorname{Lie}}
\newcommand\Tr{\operatorname{Tr}\,}
\newcommand\ch{\operatorname{ch}}
\newcommand\Ad{\operatorname{Ad}}
\newcommand\ad{\operatorname{ad}}
\newcommand\reg{_{\operatorname{reg}}}
\renewcommand\ss{\operatorname{ss}}
\newcommand\unip{\operatorname{uni}}
\newcommand\uni{_{\operatorname{uni}}}
\newcommand\nil{_{\operatorname{nil}}}
\newcommand\id{\operatorname{id}}
\newcommand\lp{\operatorname{\!\langle\!}}
\newcommand\rp{\operatorname{\!\rangle}}
\renewcommand\Im{\operatorname{Im}}
\newcommand\odd{\operatorname{odd}}
\newcommand\sym{\operatorname{sym}}
\newcommand\ex{\operatorname{ex}}
\newcommand\en{\operatorname{en}}
\newcommand\dw{\dot w}
\newcommand{\isom}{\,\raise2pt\hbox{$\underrightarrow{\sim}$}\,}
\numberwithin{equation}{section}
\newtheorem{thm}{Theorem}[section]
\newtheorem{lem}[thm]{Lemma}
\newtheorem{cor}[thm]{Corollary}
\newtheorem{prop}[thm]{Proposition}
\def \para#1{\par\medskip\textbf{#1}
              \addtocounter{thm}{1}}
\def \remark#1{\par\medskip\noindent
                \textbf{Remark #1}
                \addtocounter{thm}{1}}
\def \remarks#1{\par\medskip\noindent
                \textbf{Remarks #1}
                \addtocounter{thm}{1}}
\begin{document}
\setlength{\baselineskip}{4.9mm}
\setlength{\abovedisplayskip}{4.5mm}
\setlength{\belowdisplayskip}{4.5mm}
\renewcommand{\theenumi}{\roman{enumi}}
\renewcommand{\labelenumi}{(\theenumi)}
\renewcommand{\thefootnote}{\fnsymbol{footnote}}
\renewcommand{\thefootnote}{\fnsymbol{footnote}}
\allowdisplaybreaks[2]
\parindent=20pt
\medskip
\begin{center}
 {\bf Exotic symmetric space over a finite field, II} 
\\
\vspace{1cm}
Toshiaki Shoji\footnote{
partly supported by JSPS Grant-in Aid for Scientific Research (A) 20244001.}
 and Karine Sorlin\footnote{ supported by 
ANR JCJC REPRED, ANR-09-JCJC-0102-01.}
\\
\vspace{0.3cm}
\end{center}
\title{}

\begin{abstract}
This paper is the second part of the papers in the same title.
In this paper, we prove a conjecture of Achar-Henderson, 
which asserts that the Poincar\'e polynomials of the intersection
cohomology complex associated to the closure of $Sp_{2n}$-orbits
in the Kato's exotic nilpotent cone coincide with the modified 
Kostka polynomials indexed by double partitions, introduced by 
the first author.  Actually this conjecture was recently proved 
by Kato by a different method.  Our approach is based on the
theory of character sheaves on the exotic symmetric space.     

\end{abstract}
\maketitle
\markboth{SHOJI AND SORLIN}{ EXOTIC SYMMETRIC SPACE, II}
\pagestyle{myheadings}

\par\bigskip

\begin{center}
{\sc Introduction}
\end{center}
\par\medskip
This paper is a continuation of [SS]. 
We basically follow the notation in [SS].
In particular, $V$ is an $2n$-dimensional vector space over 
$\Bk$, an algebraic closure of a finite field $\Fq$ with 
$\ch \Bk \ne 2$, and $G = GL(V), H = G^{\th} \simeq Sp_{2n}$ 
for an involutive automorphism $\th$ on $G$.  Let 
$G^{\io\th} = \{ g \in G \mid \th(g) = g\iv \}$ (here 
$\io: G \to G, g \mapsto g\iv$) and $G^{\io\th}\uni$ be the 
set of unipotent elements in $G^{\io\th}$.   We denote 
$\CX = G^{\io\th} \times V$ and $\CX\uni = G^{\io\th}\uni \times V$, 
on which $H$ acts diagonally.
$\CX\uni$ is isomorphic to the exotic nilpotent cone introduced 
in [Ka1], and the set of $H$-orbits in $\CX\uni$ is naturally parametrized 
by the set $\CP_{n,2}$ of double partitions of $n$.
Let $\CO_{\Bla}$ be the orbit in $\CX\uni$ corresponding to 
$\Bla \in \CP_{n,2}$, and we consider the intersection cohomology 
$K = \IC(\ol\CO_{\Bla}, \Ql)$ associated to $\CO_{\Bla}$. 
Achar and Henderson conjectured in [AH] that
$\CH^jK = 0$ unless $j \equiv 0 \pmod 4$ and that 
\begin{equation*}
\tag{*}
\sum_{i \ge 0}(\dim \CH^{4i}_zK)t^{2i} = t^{-a(\Bla)}\wt K_{\Bla,\Bmu}(t)
\end{equation*}
for $z \in \CO_{\Bmu} \subset \ol\CO_{\Bla}$, 
where $\wt K_{\Bla, \Bmu}(t)$ is a modified Kostka polynomial 
associated to double partitions $\Bla, \Bmu$, introduced by 
the first author in [S2], and $a(\Bla)$ is a certain integer 
(see 5.1 for the notation).   
The main objective in this paper is the proof of their conjecture, 
based on the theory of character sheaves.  
Note that the conjecture (in the case where $\Bk = \BC$) was 
recently proved by Kato 
[Ka3], [Ka4] by a totally different method. 
\par
Our strategy for the proof is as follows; in [SS], 
we have constructed a complex $K_{T,\CE}$ on 
$\CX$, where 
$T$ is the $\th$-stable maximal torus of $G$ consisting of 
diagonal matrices, and $\CE$ is a tame local system on $T^{\io\th}$.
The construction in [SS] makes sense if we replace $T$ by 
an $F$-stable, $\th$-stable maximal torus contained in a $\th$-stable
(not necessarily $F$-stable) Borel subgroup of $G$.
By changing the notation, let $(T,\CE)$ be such a pair. 
Assume given an isomorphism $\vf_0: F^*\CE \isom \CE$. 
Then one can show that there exists a canonical isomorphism 
$\vf: F^*K_{T,\CE} \isom K_{T,\CE}$. 
We define an $H^F$-invariant function $\x_{T,\CE}$ on $\CX^F$ as 
the characteristic function $\x_{K_{T,\CE}, \vf}$. 
Then we define a Green function $Q_T : \CX\uni^F \to \Ql$ 
as the restriction of $\x_{T,\CE}$ on $\CX\uni^F$, which is shown 
to be independent of the choice of $\CE$. 
The Green functions have a similar role as 
in the theory of character sheaves  by Lusztig [L3]. 
We prove a character formula for $\x_{T,\CE}$, which describes the 
values of $\x_{T,\CE}$ in terms of various Green functions  associated 
to the centralizer of semisimple elements $s \in G^{\io\th}$ in $G$.
By using the character formula, we prove the orthogonality relations 
for $\x_{T,\CE}$, and also for Green functions $Q_T$.  
By the Springer correspondence established in [SS], the orthogonality 
relations for Green functions imply the orthogonality relations 
for the characteristic functions of the intersection cohomologies 
$\IC(\ol \CO_{\Bla},\Ql)$.   This fits to the characterization of 
modified Kostka polynomials ([S2]) in terms of certain orthogonality 
relations, and (*) follows from this, combined with a certain purity
result. 
\par
In the last section, we prove that the characteristic functions of
$F$-stable character sheaves give a basis of the space of 
$H^F$-invariant functions on $\CX^F$, which supports the conjecture 
in Introduction in [SS] that our character sheaves coincide with 
those proposed in [HT].   
\par\bigskip
{\bf Contents}
\par\medskip
1. \ Green functions 
\par
2. \ Character formulas 
\par
3. \ Orthogonality relations 
\par
4. \ A purity result 
\par
5. \ Kostka polynomials 
\par
6. \ $H^F$-invariant functions on $\CX^F$

\par\bigskip
\section{Green functions}

\para{1.1.}
Let 
$G = GL(V)$, $H = G^{\th} \simeq Sp_{2n}$ be as in 
Introduction.  Then 
$G, V$ have natural $\Fq$-structures, and denote by $F$ 
the corresponding Frobenius maps. The map $\th$  
commutes with $F$.  The $\th$-stable pair
$(B, T)$ in [SS, 1.2] is an $F$-stable
pair, which we denote in this paper by $(B_0, T_0)$ 
to distinguish their $\Fq$-structures. 
$N_G(T_0)/T_0$ is a Weyl group of $G$ 
which is isomorphic to $S_{2n}$.  
Let $W_n = N_H(T^{\th}_0)/T^{\th}_0$.
Then $W_n$ is a Weyl group of type $C_n$.
The image of $N_H(T_0) \subset N_G(T_0)$ under the map
$N_G(T_0) \to N_G(T_0)/T_0$ is isomorphic to 
$N_H(T_0)/T_0^{\th} \simeq N_H(T_0^{\th})/T_0^{\th}$, and 
the inclusion map  
$N_H(T_0) \to N_G(T_0)$ induces an embedding  
$W_n \hra S_{2n}$.  $F$ acts trivially on $W_n$
and on $S_{2n}$.
Let $\CW = N_H(T_0^{\io\th})/Z_H(T_0^{\io\th})$, which is isomorphic to $S_n$.
Since $W_n \simeq S_n \ltimes (\BZ/2\BZ)^n$, $w \in W_n$ can be written 
as $w = \s\t$ with $\s \in S_n$, $\t \in (\BZ/2\BZ)^n$.
We write $\t \in (\BZ/2\BZ)^n$ as $\t = (\t(1), \dots, \t(n))$ 
with $\t(i) = \pm 1$. 
Let $\{ e_i, f_i\}$ be a symplectic basis of $V$ as in [SS], consisting 
of eigenvectors for $T_0$.

\par
The $G^F$-conjugacy classes of $F$-stable maximal tori in $G$ are
parametrized by the conjugacy classes of $S_{2n}$.  For each 
$w \in S_{2n}$ and its representative $\dw \in N_G(T_0)$,  
choose $g \in G$ such that $g\iv F(g) = \dw$.
Then $T_w = gT_0g\iv$ gives a representative of the $G^F$-conjugacy class 
of $F$-stable maximal tori  
corresponding to the conjugacy class in $S_{2n}$ containing $w$.
If $T$ is an $F$-stable, $\th$-stable maximal torus contained 
in an $\th$-stable Borel subgroup of $G$, then $T$ is $H$-conjugate 
to $T_0$, and $T$ can be written as $T = T_w$ with $w \in W_n$.
In this case, $T^{\io\th}$ is $H$-conjugate to $T_0^{\io\th}$,  
and the $H^F$-conjugacy classes of $F$-stable, 
$\th$-anisotropic maximal tori in $G$ are 
parametrized by the conjugacy classes
of $N_H(T_0^{\io\th})/Z_H(T_0^{\io\th}) \simeq S_n$.
For $w = \s\t, w' = \s'\t' \in W_n$, $T_w^{\io\th}$ is $H^F$-conjugate 
to $T_{w'}^{\io\th}$ if and only if $\s, \s'$ are conjugate in 
$S_n$.  Note that $T_w^{\io\th}$ is isomorphic to an $F$-stable maximal
torus $S_{\s}$ in $GL_n$ (defined for $\s \in S_n$ similar to $T_w$), 
and the correspondence $T_w^{\io\th} \mapsto S_{\s}$ is compatible 
with the parametrization of $GL_n^F$-conjugacy classes of $F$-stable 
maximal tori in $GL_n$. 

\para{1.2.}
In the following we denote the objects $M_n, M_I$, etc. in [SS, 3.2]
by $M_{n,0}, M_{I,0}$, etc. by attaching 0. 
For a given $w \in W_n = N_H(T^{\th}_0)/T_0^{\th}$, take 
a representative $\dw \in N_H(T_0^{\th})$.  We choose an element 
$h \in H$ such that $h\iv F(h) = \dw$.  Put 
$T = hT_0h\iv, B = hB_0h\iv, M_n = h(M_{n,0})$.   Thus $M_n$ is 
a maximal isotropic subspace of $V$ stable by $B$. 
Let $I$ be a subset of $[1,n]$ such that $|I| = m$.
We put $M_I = h(M_{I,0})$, and consider the maps 
$\psi: \wt\CY \to \CY$, 
$\psi_I: \wt\CY_I \to \CY_m^0$ as in [SS, 3.2], 
defined in terms of $T, B, M_n, M_I$ instead of 
$T_0, B_0, M_{n,0}, M_{I,0}$.  
Let $\CE$ be a tame local system on $T^{\io\th}$.  As in [SS, 3.5], 
we consider the complex $(\psi_I)_*\a_I^*\CE$.  
We define a variety $\wt\CY^{\bullet}$ by 
\begin{equation*}
\wt\CY^{\bullet} = \{ (x,v, gT^{\th}) \in G^{\io\th}\reg \times V 
          \times H/T^{\th} \mid g\iv xg \in T^{\io\th}\reg, 
                  g\iv v \in M_n\}, 
\end{equation*}
and define a subvariety $\wt\CY_I^{\bullet}$ 
similarly, but  by replacing $M_n$ by $M_I$ 
in the above formula. 
We define maps $\psi^{\bullet}: \wt\CY^{\bullet} \to \CY,  
\psi_I^{\bullet}: \wt\CY_I^{\bullet} \to \CY_m^0$ by 
$(x,v, gT^{\th}) \mapsto (x,v)$.
Note that 
$Z_H(T^{\io\th}) \simeq \prod_{1 \le i \le n}SL_2$. 
We denote by $U_2$ the unipotent radical of a Borel subgroup of $SL_2$. 
Then $(x,v,gT^{\th}) \mapsto (x,v, g(B^{\th}\cap Z_H(T^{\io\th}))$
gives a map $\wt\g_I : \wt\CY^{\bullet}_I \to \wt\CY_I$, which gives rise to
a vector bundle over $\wt\CY_I$ with fibre isomorphic to 
$U_2^n$.   
Let $\a_0^{\bullet} : \wt\CY^{\bullet} \to T^{\io\th}$ be the map
defined by $(x,v, gT^{\th}) \mapsto g\iv xg$, and define 
$\a_I^{\bullet}: \wt\CY_I^{\bullet} \to T^{\io\th}$ by its 
restriction on $\wt\CY_I^{\bullet}$.
Then we have the following 
commutative diagram 
\begin{equation*}
\begin{CD}
T^{\io\th} @<\a_I^{\bullet}<<  \wt\CY_I^{\bullet} 
                          @>\psi^{\bullet}_I>> \CY_m^0 \\
 @V\id VV       @VV\wt\g_I V                    @VV\id V  \\
T^{\io\th} @<\a_I <<  \wt\CY_I    @>\psi_I>>          \CY_m^0.
\end{CD}
\end{equation*} 
It follows from this that we have a canonical isomorphism 
$(\psi_I^{\bullet})_!(\a_I^{\bullet})^*\CE[-2n] 
   \isom (\psi_I)_*\a_I^*\CE$
for each $I$ since $\dim U_2^n = n$, and we have 
\begin{equation*}
\tag{1.2.1}
\psi^{\bullet}_!(\a_0^{\bullet})^*\CE[-2n] \isom \psi_*\a_0^*\CE.
\end{equation*} 
\par
Let $\wt\CY^{\bullet}_{F(I)}$ be a similar variety as $\wt\CY_I^{\bullet}$,
defined in terms of $T, F(M_I)$ instead of $T, M_I$. 
The map 
$(x,v, gT^{\th}) \mapsto (F(x), F(v), F(g)T^{\th})$ gives 
a map $F: \wt\CY_I^{\bullet} \to \wt\CY_{F(I)}^{\bullet}$.  
(Here we use the notation $F(I)$ just as a symbol, and  
it does not mean a subset of $[1,n]$.  In fact,    
$F(M_I) = h(\dw(M_{I,0}))$ and $F(M_I) \not\subset M_n$ 
in general.)
Now $w$ is written as $w = \s\t \in W_n$  
with $\s \in S_n, \t \in (\BZ/2\BZ)^n$.
We define $\t_I \in (\BZ/2\BZ)^n$ for each $I$ by the condition   
\begin{equation*}
\t_I(i) = \begin{cases}
            -1 &\text{ if $i \in I$ and $\t(\s\iv(i)) = -1$ }, \\
             1 &\text{ otherwise}.
          \end{cases}
\end{equation*}
Let $\dot\t_I \in N_H(T_0^{\th})$  be a representative of $\t_I$ which is
a permutation of the basis $\{ e_i, f_i\}$ of $V$.  Then we have 
$\dot\t_I\dw(M_{I,0}) = M_{\s(I),0}$, where $\s(I)$ is a subset of $[1,n]$.
Put $h_I = h\dot\t_Ih\iv \in N_H(T^{\th})$.  
Then $h_I \in Z_H(T^{\io\th})$.
We have $h_IF(M_I) = M_{\s(I)}$.   
Hence one can define a map 
$b_I : \wt\CY_{F(I)}^{\bullet} \to \wt\CY_{\s(I)}^{\bullet}$
by $(x,v, gT^{\th}) \mapsto (x,v, gh_I\iv T^{\th})$, and 
we obtain a map $b_IF: \wt\CY_I^{\bullet} \to \wt\CY_{\s(I)}^{\bullet}$.
\par
Note that $\CY_m$ coincides with the original variety 
$\bigcup_{g \in H}g((T_0)\reg^{\io\th} \times M_{m,0})$, and 
$\CY_m^0 = \CY_m \backslash \CY_{m-1}$. 
Hence $\CY_m, \CY_m^0$ are  $F$-stable. 
Let us consider the variety 
$\wt\CY_m^{+,\bullet} = (\psi^{\bullet})\iv(\CY_m^0)$.  
Then as in the case of $\wt\CY_m^+$ (see [SS, 3.4]), 
we have $\wt\CY_m^{+,\bullet} = \coprod_I\wt\CY_I^{\bullet}$,
where $I$ runs over all the subsets of $[1,n]$ such that $|I| = m$, 
and the set $\wt\CY_I^{\bullet}$ forms connected components of 
$\wt\CY_m^{+,\bullet}$.
It follows from the above discussion, that one can define 
a map $F' = \coprod_Ib_IF$ on $\wt\CY_m^{+,\bullet}$.
We have a commutative diagram (note that $h_I \in Z_H(T^{\io\th})$),
where $\psi_m^{\bullet}$ is the restriction of $\psi^{\bullet}$ on
$\wt\CY_m^{+,\bullet}$.
\begin{equation*}
\tag{1.2.2}
\begin{CD}
T^{\io\th} @<\a_0^{\bullet}<<   \wt\CY_m^{+,\bullet}  
                 @>\psi_m^{\bullet}>> \CY_m^0 \\
@VF VV                 @VVF' V                   @VVF V    \\
T^{\io\th}  @<\a_0^{\bullet}<<   \wt\CY_m^{+,\bullet}  
              @>\psi_m^{\bullet} >> \CY_m^0.
\end{CD}
\end{equation*}
\par
Assume that $\CE$ is such that $F^*\CE \simeq \CE$, and   
fix an isomorphism 
$\vf_0: F^*\CE \isom \CE$. 
Put $K^{\bullet}_{m,T,\CE} = (\psi_m^{\bullet})_!(\a_0^{\bullet})^*\CE$
and $K_{m,T,\CE} = (\psi_m)_*\a_0^*\CE$.
Then by (1.2.2), $\vf_0$ induces an isomorphism
$F^*(K^{\bullet}_{m,T,\CE}) \isom K^{\bullet}_{m,T,\CE}$.
A similar formula as (1.2.1) holds by replacing $\psi^{\bullet}, \psi$
by $\psi^{\bullet}_m, \psi_m$, and it induces  
an isomorphism
$\vf'_m: F^*(K_{m,T,\CE})\isom K_{m,T,\CE}$. 

\para{1.3.}
By (3.5.2) (and the discussion in the proof of Proposition 3.6) 
in [SS], $(\psi_m)_*\a_0^*\CE$ is expressed as 
\begin{equation*}
(\psi_m)_*\a_0^*\CE \simeq \bigoplus_{0 \le i \le n-m}
              \CL_{m,i}[-2i],
\end{equation*}
where 
$\CL_m = \bigoplus_{\r \in \CA_{\Bm,\CE}\wg} \wt V_{\r}\otimes \CL_{\r}$
with a simple local system $\CL_{\r}$ on $\CY_m^0$, 
and $\CL_{m,i}$ is a direct sum of 
$\sharp\{ J \subset [1,n-m]\mid |J| = i\}$ copies of $\CL_m$. 
It follows that $\vf_m'$ induces an isomorphism 
$F^*\CL_{m,i} \isom \CL_{m,i}$ for each $i$, and in particular, 
an isomorphism $\vf''_m: F^*\CL_m \isom \CL_m$ by taking $i = n-m$. 
\par
Let $\CX_m$ be the variety defined in [SS, 3.1], which is the 
closure of $\CY_m$ in $\CX = G^{\io\th} \times V$.
We consider a semisimple perverse sheaf $K_{T,\CE}$ on $\CX$ 
defined as in (4.1.1) in [SS], 
\begin{equation*}
\tag{1.3.1}
K_{T,\CE} = \bigoplus_{0 \le m \le n}\bigoplus_{\r \in \CA_{\Bm, \CE}\wg}
           \wt V_{\r} \otimes \IC(\CX_m, \CL_{\r})[d_m],
\end{equation*}
where $d_m = \dim \CX_m$.
Thus $K_{T,\CE}$ can be written as 
\begin{equation*}
K_{T,\CE} = \bigoplus_{0 \le m \le n}\IC(\CX_m, \CL_m)[d_m], 
\end{equation*}
with a semisimple local system $\CL_m$ on $\CY_m^0$.
Here $\vf_m''$ is extended to a unique isomorphism 
$
\vf_m: F^*\IC(\CX_m,\CL_m)[d_m] \isom \IC(\CX_m,\CL_m)[d_m].
$ 
We now define an isomorphism $\vf: F^*K_{T,\CE} \isom K_{T,\CE}$
by $\vf = \bigoplus_{0 \le m \le n}\vf_m$.

\para{1.4.}
Following [SS, 4.1], we consider the diagram 
\begin{equation*}
\begin{CD}
T^{\io\th} @<\a <<  \wt\CX @>\pi >> \CX, 
\end{CD}
\end{equation*}
where $\CX = G^{\io\th} \times V$ and 
\begin{equation*}
\tag{1.4.1}
\wt\CX = \{ (x,v, gB^{\th}) \in G^{\io\th} \times V \times H/B^{\th}
                \mid g\iv xg \in B^{\io\th}, g\iv v \in M_n\},
\end{equation*}
with $\pi: (x,v, gB^{\th}) \mapsto (x,v)$, 
$\a: (x,v, gB^{\th}) \mapsto p(g\iv xg)$, ($p: B^{\io\th} \to T^{\io\th}$
is the natural projection).  
We also consider 
\begin{equation*}
\tag{1.4.2}
\wt\CX\uni = \{ (x,v, gB^{\th}) \in G^{\io\th}\uni \times 
 V \times H/B^{\th} \mid g\iv xg \in B^{\io\th}, g\iv v \in M_n\}
\end{equation*}
and a map $\pi_1: \wt\CX\uni \to \CX\uni = G^{\io\th}\uni \times V$
by $(x,v, gB^{\th}) \mapsto (x,v)$.
Hence $\wt\CX, \wt\CX\uni$ are the same objects as those given in 
[SS, 3.1, 2.4],  
defined by replacing $B_0, M_{n,0}$ by $B, M_n$.
Let $\wt K_{B, \CE} = \pi_*\a^*\CE[\dim \CX]$.
Note that $\wt K_{B,\CE}, K_{T,\CE}$ are respectively isomorphic 
to the corresponding objects given in [SS]. 
Hence by [SS, Theorem 4.2] we have $\wt K_{B,\CE} \simeq K_{T,\CE}$.
For each $0 \le m \le n$, let
$\CX_m = \bigcup_{g \in H}g(B^{\io\th} \times M_m)$ and 
put $\CX_m^0 = \CX_m \backslash \CX_{m-1}$.
Thus $\CX_m, \CX_m^0$ are the same objects as those defined 
in [SS, 4.3]. 
Put $\wt\CX_m^+ = \pi\iv(\CX_m^0)$.
The decomposition $\wt\CX_m^+ = \coprod_I\wt\CX_I$ in 
[SS, Lemma 4.4] is applicable to our setup, where $\wt\CX_I$ 
are defined in terms of $B, M_I$.  More generally, 
we consider a $\th$-stable Borel subgroup $B'$ containing 
$T$ and ${B'}^{\th}$-stable
maximal isotropic subspace $M_n'$ with respect to a symplectic basis 
$\{ e_i', f_i'\}$ consisting of eigenvectors for $T$.   Then 
the definition of $\wt\CX_I$ in [SS, 4.3] makes sense in this situation, 
and we denote the corresponding object by $\wt\CX_{I, B'}$, i.e.,
\begin{equation*}
\wt\CX_{I,B'} = \{ (x,v, g{B'}^{\th}) \in \wt\CX_m^+ 
                     \mid (x,v, g{B'}^{\th}) \mapsto I \},
\end{equation*}
where we understand that $\wt\CX_m^+$ is defined in terms of 
$B', M_n'$.
We fix $I \subset [1,n]$, and consider $\wt\CX_I = \wt\CX_{I,B}$.
The Frobenius action $(x,v, gB^{\th}) \mapsto 
(F(x), F(v), F(g)FB^{\th})$ gives a map 
$F: \wt\CX_{I,B} \to \wt\CX_{F(I), FB}$, 
where $\wt\CX_{F(I), FB}$ denotes the variety defined 
by the symplectic basis $\{ F(he_i), F(hf_i)\}$ in $V$.
Let $h_I \in N_H(T^{\th}) \cap Z_H(T^{\io\th})$ be as in 1.2.
One can define an isomorphism 
$b'_I : \wt\CX_{F(I), FB} \to \wt\CX_{\s(I), B'}$ 
by $(x,v,gFB^{\th}) \mapsto (x,v, gh_I\iv {B'}^{\th})$, 
where $B' = h_I(FB) h_I\iv$.   
Thus we have a map $b'_IF: \wt\CX_{I,B} \to \wt\CX_{\s(I), B'}$.
\par
We define maps  
$\pi_I: \wt\CX_{I,B} \to \CX_m^0, \wt\a_I : \wt\CX_{I, B} \to T^{\io\th}$
similar to the case $\wt\CX$.  
Since $h_I \in Z_H(T^{\io\th})$, we have the following commutative diagram
\begin{equation*}
\begin{CD}
T^{\io\th} @<\wt\a_I << \wt\CX_{I,B}  @>\pi_I >>  \CX_m^0 \\
  @V F VV          @Vb_I'F VV         @VV F V  \\
T^{\io\th}  @<\wt\a'_{\s(I)} <<  \wt\CX_{\s(I), B'}  @>\pi'_{\s(I)} 
   >>  \CX_m^0,
\end{CD}
\end{equation*}
where $\pi'_{\s(I)}, \wt\a'_{\s(I)}$ are defined similar to 
$\pi_I, \wt\a_I$, but by using $B'$ instead of $B$.
We consider the complex $(\pi_I)_*(\wt\a_I)^*\CE$ on $\CX_m^0$, 
which we denote by $\wt K_{I,B, \CE}$. 
Similarly we define 
$\wt K_{\s(I), B', \CE} = (\pi'_{\s(I)})_*(\wt\a'_{\s(I)})^*\CE$.  
By the above commutative diagram, $\vf_0: F^*\CE \isom \CE$ induces 
an isomorphism 
$\wt\vf_I : F^*\wt K_{\s(I), B',\CE} \isom \wt K_{I, B,\CE}$.
On the other hand,  we consider the complex 
$(\psi^{\bullet}_I)_*(\a^{\bullet}_I)^*\CE[-2n]$ 
on $\CY_m^0$, which is decomposed as in (3.4.3), (3.5.1) in [SS].
Thus there exists a unique DGM-extension to $\CX_m^0$, 
which we denote by $K^{\bullet}_{I,T, \CE}$.  
The discussion in 1.3 shows that 
$b_IF$ gives a map $\wt\CY^{\bullet}_I \to \wt\CY^{\bullet}_{\s(I)}$, 
and  $\vf_0$ induces an isomorphism 
$\vf_I : F^*K^{\bullet}_{\s(I),T, \CE} \to K^{\bullet}_{I,T, \CE}$. 
The proof of Theorem 4.2 in [SS], together with (1.2.1), 
 shows that there exists
an isomorphism 
$K^{\bullet}_{I,T, \CE}[-2n] \isom \wt K_{I, B,\CE}$ which we denote 
by $g_B$. The map $g_{B'}$ is defined similarly.
Then the following diagram commutes;
\begin{equation*}
\tag{1.4.1}
\begin{CD}
F^*K^{\bullet}_{\s(I),T, \CE}[-2n] @> \vf_I >> K^{\bullet}_{I,T, \CE}[-2n]  \\
@V F^*(g_{B'}) VV               @VV g_BV      \\
F^*\wt K_{\s(I), B',\CE}   @> \wt\vf_I >>  \wt K_{I,B,\CE} 
\end{CD}
\end{equation*}
 
\para{1.5.}
Let $\DD X$ be the derived category of $\Ql$-constructible sheaves
on a variety $X$ over $\Bk$. Assume that $X$ is defined over $\Fq$
with Frobenius map $F: X \to X$. 
Recall that for a given complex $K \in \DD X$ with an isomorphism 
$\f : F^*K \isom K$, 
the characteristic function $\x_{K,\f} : X^F \to \Ql$ is defined 
as
\begin{equation*}
\x_{K,\f}(x) = \sum_i(-1)^i\Tr(\f^*, \CH_x^iK) \qquad (x \in X^F), 
\end{equation*}
where $\f^*$ is the induced isomorphism on $\CH^i_xK$.
\par
Returning to our original setup, we consider a tame local
system $\CE$ on $T^{\io\th}$ such that $F^*\CE \simeq \CE$.
Since the isomorphism $F^*\CE \isom \CE$ is unique up to scalar, 
we fix $\vf_0: F^*\CE \isom \CE$ by the condition that 
the induced map of $\vf_0$ on the stalk $\CE_e$ at the unit element 
$e \in (T^{\io\th})^F$
is identity.   
We consider the characteristic function 
$\x_{K_{T,\CE},\vf}$ induced from this $\vf_0$, 
and denote it by $\x_{T,\CE}$.
Since $K_{T,\CE}$ is an $H$-equivariant perverse sheaf, 
$\x_{T,\CE}$ is an $H^F$-invariant function on 
$\CX^F = (G^{\io\th} \times V)^F$.
We define a function $Q_T = Q_T^G$ as the restriction of $\x_{T,\CE}$ on 
$\CX\uni^F = (G^{\io\th}\uni \times V)^F$, 
and call it a Green function on 
$\CX\uni^F$. 
This definition makes sense since the following proposition holds.

\begin{prop} 
The restriction of $\x_{T,\CE}$ on $\CX\uni^F$ is independent of 
the choice of $\CE$.  Hence it coincides with the restriction of 
$\x_{T,\Ql}$ on it.
\end{prop}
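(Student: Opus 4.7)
The strategy is to reduce the computation of $\x_{T,\CE}$ on $\CX\uni^F$ to one that manifestly does not involve $\CE$, exploiting the fact that the $\a$-coordinate of any point in $\pi\iv(\CX\uni)$ is forced to be the identity of $T^{\io\th}$.

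First, by [SS, Theorem 4.2] together with the discussion in 1.4, the semisimple perverse sheaf $K_{T,\CE}$ is isomorphic to $\wt K_{B,\CE} = \pi_*\a^*\CE[\dim\CX]$, and the commutative diagram (1.4.1) implies that this isomorphism intertwines the Frobenius structure $\vf$ assembled in 1.3 from $\vf_0$ with the Frobenius structure on $\wt K_{B,\CE}$ induced directly from $\vf_0$ via pullback and pushforward. Hence it suffices to compute the characteristic function of $\pi_*\a^*\CE$, equipped with its natural Frobenius action, at points of $\CX\uni^F$.

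Next, let $i:\CX\uni\hra\CX$ denote the closed inclusion, let $\wt\CX\uni = \pi\iv(\CX\uni)$ with inclusion $j:\wt\CX\uni\hra \wt\CX$, and let $\pi_1:\wt\CX\uni \to \CX\uni$ be the restriction of $\pi$. Since $\pi$ is proper (its fibres are closed subvarieties of $H/B^{\th}$), proper base change yields an isomorphism
\begin{equation*}
i^*\pi_*\a^*\CE \isom (\pi_1)_*(\a\circ j)^*\CE,
\end{equation*}
compatible with Frobenius. The essential observation is that for $(x,v,gB^{\th}) \in \wt\CX\uni$, the element $g\iv xg$ lies in $B^{\io\th}$ and is unipotent, hence its image under the projection $p:B^{\io\th}\to T^{\io\th}$ is the unit $e$. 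Therefore $\a\circ j$ is the constant map with value $e$, so $(\a\circ j)^*\CE$ is canonically the constant sheaf with stalk $\CE_e$. Since $\CE$ is a rank one tame local system, $\CE_e \simeq \Ql$ canonically, and under our normalization $\vf_0|_{\CE_e} = \id$ the induced Frobenius is the identity. Thus $(\pi_1)_*(\a\circ j)^*\CE$ with its Frobenius structure is canonically isomorphic to $(\pi_1)_*\Ql$ with the standard Frobenius action, independent of $\CE$. Passing to characteristic functions gives the proposition.

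The main obstacle I expect is bookkeeping the Frobenius compatibility in the first step: the isomorphism $\wt K_{B,\CE}\simeq K_{T,\CE}$ must intertwine two \emph{a priori} different Frobenius structures, namely the pushforward one on $\pi_*\a^*\CE$ and the piecewise one $\vf = \bigoplus_m\vf_m$ built from the IC decomposition (1.3.1). This is essentially encoded in the functorial diagram (1.4.1), but requires unwinding the construction of each $\vf_m$ from $\vf_m''$ on the open stratum $\CY_m^0$ and comparing it with the restriction of the pushforward Frobenius; all other steps (proper base change, the unipotent reduction to the point $e\in T^{\io\th}$, and the normalization argument) are formal.
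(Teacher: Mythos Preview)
Your intuition that $\a$ collapses to the identity $e$ on the unipotent fibre is correct, and this is indeed the ultimate source of the independence from $\CE$. However, the argument as written has a genuine gap, and it is precisely the one you flag in your last paragraph but then dismiss too quickly.

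The Borel subgroup $B = hB_0h\iv$ is \emph{not} $F$-stable in general: since $h\iv F(h) = \dw$, one has $F(B) = h\dw B_0\dw\iv h\iv$, which differs from $B$ unless $w$ normalizes $B_0$. Consequently the variety $\wt\CX$ (and likewise $\wt\CX\uni$) carries no Frobenius endomorphism, $\pi$ and $\pi_1$ are not $F$-equivariant, and there is simply no ``Frobenius structure on $\wt K_{B,\CE}$ induced directly from $\vf_0$ via pullback and pushforward'' to speak of. Your appeal to proper base change and to the ``standard Frobenius action'' on $(\pi_1)_*\Ql$ therefore has no meaning as stated. This is exactly why the isomorphism $\vf$ in 1.2--1.3 is built so laboriously from the modified Frobenius $F' = \coprod_I b_IF$ on $\wt\CY_m^{+,\bullet}$, and why diagram (1.4.1) compares complexes attached to \emph{two different} Borel subgroups $B$ and $B' = h_I(FB)h_I\iv$.

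The paper's proof carries out a version of your idea that actually accounts for this. Working stratum by stratum, it reduces (via (1.4.1) extended to $\CX\uni$) to showing that the map $\Psi_I$ on $(\pi_{1,I,B})_*\Ql$ is independent of $\CE$. To do this it factors $\pi_m$ through the fibre product $\CZ'_m = \CX_m^0 \times_{T^{\io\th}/S_n} T^{\io\th}$ over the Steinberg quotient, proves the key tensor decomposition $K^{\sharp}_{I,\CE} \simeq (q_m')^*\CE \otimes K^{\sharp}_{I,\Ql}$ on $\CZ'_m$ (formula (1.6.10)), and then observes that on the unipotent fibre $\CZ'\uni \simeq \CX\uni \times \{e\}$ the factor $(q_m')^*\CE$ becomes the stalk $\CE_e$ on which $\vf_0$ is the identity by normalization. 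The Steinberg factorization is what makes the ``separation of the $\CE$-part'' rigorous in the absence of a global Frobenius on $\wt\CX$. It is worth noting that the paper itself remarks that Lusztig's original argument for the analogous statement in the reductive case contained a gap of a similar nature, later repaired in [L5]; the difficulty is real, not merely bookkeeping.
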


\begin{proof}
The corresponding fact in the case of reductive groups was 
first proved by Lusztig in [L3, (8.3.2)]. However its proof
contained a gap. The complete proof was later given in [L5]
in a more general setup for disconnected reductive groups. 
(See also [Le] for the Lie algebra analogue.)  In the discussion 
below, we modify the argument in [L5].  
Let $\pi_1: \wt\CX\uni \to \CX\uni$ be the map as in 1.4, and 
$\pi_{1, I, B}$ the restriction of $\pi_1$ on 
$\wt\CX_{I,B} \cap \wt\CX\uni$.
Since there is a canonical isomorphism 
$\wt K_{I,B, \CE}|_{\CX\uni} \simeq (\pi_{1,B,I})_*\Ql$
for any $\CE$, the diagram (1.4.1) is extended to the following 
commutative diagram.
\begin{equation*}
\begin{CD}
K^{\bullet}_{I,T,\CE}[-2n]|_{\CX\uni} @>g_B>>  \wt K_{I, B,\CE}|_{\CX\uni} 
                   @> \sim >> (\pi_{1,I,B})_*\Ql  \\
@A\vf_I AA     @AA\wt\vf_I A   @AA\Psi_I A    \\
F^*K^{\bullet}_{\s(I), T,\CE}[-2n]|_{\CX\uni}  @>F^*(g_{B'})>>   
   F^*\wt K_{\s(I), B',\CE}|_{\CX\uni}
                    @>\sim >>  F^*(\pi_{1,\s(I), B'})_*\Ql.   
\end{CD}
\end{equation*}
The construction of the map $\vf: F^*K_{T,\CE} \isom K_{T,\CE}$ in 
1.2, 1.3 shows 
that $\vf$ is determined completely from $\vf_I$ for each $I$.  
Hence in order to prove the proposition, it is enough to show that
\par\medskip\noindent
(1.6.1) \  The map $\Psi_I$ is independent of the choice of $\CE$.
\par\medskip

We show (1.6.1).  
In the discussion below, the objects 
$\wt\CX, \wt\CX_I$, etc. are
referred to $T, B$ unless otherwise stated.
Weyl groups appearing here such as $S_{2n}, W_n, \CW \simeq S_n$ are 
considered with respect to $T$.
Let $\wt\w : G \to T/S_{2n}$ be the Steinberg map
with respect to $G$.  We embed $S_n \simeq \CW$ in $S_{2n}$ as in 1.1.  Then
we have $T^{\io\th}/S_n \hookrightarrow T/S_{2n}$, and 
$\wt\w$ induces the Steinberg map $\w : G^{\io\th} \to T^{\io\th}/S_n$.
We have a commutative diagram
\begin{equation*}
\tag{1.6.2}
\begin{CD}
\wt \CX  @>\a >>  T^{\io\th}   \\
@V\pi VV          @VV\w V  \\  
\CX     @>\w_1>>  T^{\io\th}/S_n,
\end{CD}
\end{equation*}
where $\w_1$ is the composite of the projection 
$\CX \to G^{\io\th}$ with $\w$.
Replacing $\wt\CX, \CX$ by $\wt\CY,  \CY$, 
we have a similar 
diagram 
\begin{equation*}
\tag{1.6.3}
\begin{CD}
\wt\CY @>\a_0>>  T^{\io\th}\reg \\
@V\psi VV             @VV \w V     \\
\CY   @>\w_1>>  T^{\io\th}\reg/S_n.
\end{CD}
\end{equation*}
Let $\CZ = \CY \times_R T^{\io\th}\reg$ be the fibre product of $\CY$ 
and $T^{\io\th}$ over $R = T^{\io\th}\reg/S_n$, and 
$\vD: \wt\CY \to \CZ$ be the natural map.  Then $\psi$ is decomposed as
$\psi = p\circ \vD$, where $p$ is the projection $\CZ \to \CY$. 
Since $\CY = \coprod_{0 \le m \le n}\CY_m^0$,
we have $\CZ = \coprod_{0 \le m \le n}\CZ_m^0$, where 
$\CZ_m^0 = \CY_m^0 \times_RT^{\io\th}\reg$. 
Recall that $\wt\CY = \coprod_{0 \le m \le n}\wt\CY_m^+$. 
Then we have a similar commutative diagram as (1.6.3), by replacing
$\wt\CY, \CY$ by $\wt\CY_m^+, \CY_m^0$, and 
$\psi_m : \wt\CY_m^+ \to \CY_m^0$ is decomposed as 
$\psi_m = p_m\circ \vD_m$, where $\vD_m: \wt\CY_m^+ \to \CZ_m^0$ is 
the natural map and $p_m: \CZ_m^0 \to \CY_m^0$ is the projection.
Now $\wt\CY_m^+$ is decomposed as $\wt\CY_m^+ = \coprod_I\wt\CY_I$, 
and we have a map $\xi_I: \wt\CY_I \to \wh \CY_I$ (see [SS, 3.2]).
Then we have a commutative diagram 

\vspace{-1cm}
\begin{figure}[h]
\hspace*{1cm}
\setlength{\unitlength}{1mm}
\begin{picture}(80,80)(-6,40)
\put(-6,100) {$\wh\CY_I$}
\put(5, 101) {\vector (1,0) {30}}
\put(18,103){$_{\b_I}$}
\put(40, 100){$T\reg^{\io\th}$}
\put(-4, 96) {\vector (0,-1) {20}}
\put(-6, 70){$\CY_m^0$}
\put(42, 96) {\vector (0,-1) {20}}
\put(44, 86){$_{\w}$}
\put(5, 72){\vector (1,0) {30}}
\put(40, 70){$T^{\io\th}\reg/S_n$,}

\put(15, 85){$\CZ_m^0$}
\put(8, 94){$_{\wh\vD_I}$}
\put(26,94){$_{q_m}$}
\put(4, 80){$_{p_m}$}
\put(18, 74){$_{\w_1}$}
\put(-8, 86){$_{\e_I}$}
\put(0, 96){ \vector (3, -2) {12}}
\put(24, 88){\vector (3,2) {14}}
\put(14, 82){\vector (-3,-2) {12}}

\put(-48,86){(1.6.4)}
\end{picture}
\end{figure}

\vspace{-2.5cm}

\noindent
where $\wh\vD_I : \wh\CY_I \to \CZ^0_m$ is the natural map 
and $q_m$ is the projection.  Note that 
$p_m$ (resp. $\e_I$) is a finite Galois covering with group 
$S_n$ (resp. $S_I$).  
Since $\e_I$ is proper, $\wh\vD_I$ is also proper. 
Since $\dim \wh\CY_I = \dim \CY_m^0 = \dim \CZ_m^0$, 
$\wh\vD_I(\wh\CY_I) = \vD_I(\wt\CY_I)$ is an irreducible 
component of $\CZ_m^0$.  Since $\wh\vD_I(\wh\CY_I)$ are mutually 
disjoint, 
$\CZ_m^0 = \coprod_I\vD_I(\wt\CY_I)$ 
gives a decomposition of $\CZ_m^0$ into irreducible components.
The map $\wh\vD_I$ is $S_I$-equivariant, and the restriction of 
$p_m$ on $\wh\vD_I(\wh\CY_I)$ gives a finite Galois covering with 
group $S_I$.  It follows that $\wh\vD_I$ gives an isomorphism 
$\wh\CY_I \simeq \wh\vD(\wh\CY_I)$. 
\par
We consider the complex $(\vD_I)_*\a_I^*\CE$ on $\CZ^0_m$. 
Since  $(\xi_I)_*\a_I^*\CE \simeq H^{\bullet}(\BP_1^{I'})\otimes \b_I^*\CE$ 
(where $I'$ is the complement of $I$ in $[1,n]$, cf. 3.4, 3.5 in [SS]),  
we see that  
\begin{equation*}
\tag{1.6.5}
(\vD_I)_*\a_I^*\CE
         \simeq H^{\bullet}(\BP_1^{I'}) \otimes q_m^*\CE|_{\vD_I(\wt\CY_I)}. 
\end{equation*}
It follows that 
\begin{equation*}
\tag{1.6.6}
(\vD_m)_*\a_0^*\CE|_{\wt\CY_m^+} \simeq 
         H^{\bullet}(\BP_1^{n-m})\otimes q_m^*\CE.
\end{equation*}
Let $\CZ_m = \CY_m \times_R T^{\io\th}\reg$.  Then $\CZ_m$ is closed in 
$\CZ$, and $\CZ_m^0$ is an open dense smooth subset of 
$\CZ_m$.   We consider the intersection cohomology 
$\IC(\CZ_m, q_m^*\CE)$ on $\CZ_m$.
By using a similar argument as in the proof of Proposition 3.6 in [SS], 
(1.6.6) implies that 
\begin{equation*}
\tag{1.6.7}
\vD_*\a_0^*\CE[\dim \CZ] \simeq \bigoplus_{0 \le m \le n}
                \IC(\CZ_m, q_m^*\CE)[\dim \CZ_m]. 
\end{equation*}

\par
We now consider the fibre product $\CZ' = \CX \times_{R'} T^{\io\th}$, 
where $R' = T^{\io\th}/S_n$.  Since $\CX = \coprod_m \CX_m^0$, 
we have $\CZ' = \coprod_m (\CX_m^0 \times _{R'}T^{\io\th})$.
Then $\CZ_m' = \CX_m^0\times_{R'}T^{\io\th}$ contains $\CZ_m$ as an open
dense subset.   
We have a commutative diagram 
\begin{equation*}
\begin{CD}
\wt\CX_m^+ @>\a>>  T^{\io\th}  \\
@V \pi_m VV             @VV\w V      \\
\CX_m^0  @>>>  T^{\io\th}/S_n,
\end{CD}
\end{equation*}
where $\pi_m$ is the restriction of $\pi$ on $\wt\CX_m^+$.
Let $\vD_m': \wt\CX_m^+ \to \CZ'_m$ be the natural map, and 
$p_m': \CZ'_m \to \CX_m^0$ be the first projection so that 
$\pi_m = p_m'\circ \vD_m'$.
We show that
\begin{equation*}
\tag{1.6.8}
(\vD_m')_*\a^*\CE|_{\wt\CX_m^+} \simeq H^{\bullet}(\BP_1^{n-m})\otimes 
                \IC(\CZ'_m, q_m^*\CE).
\end{equation*}
Put $K = (\vD_m')_*\a^*\CE|_{\wt\CX^+_m}$.  Since $\vD_m'$ is proper, 
by the decomposition theorem $K$ is a direct sum of complexes 
of the form $A[i]$ for a simple perverse sheaf $A$ on 
$\CZ'_m$ with some shift $i$.
Since  $K|_{\CZ^0_m} \simeq (\vD_m)_*\a_0^*\CE|_{\wt\CY_m^+}$, 
$K|_{\CZ^0_m}$ is decomposed as in (1.6.6). 
Hence in order to prove (1.6.8), it is enough to show that 
$\supp A \cap \CZ^0_m \ne \emptyset$ for any direct summand $A[i]$
of $K$. Here $(\pi_m)_*\a^*\CE|_{\wt\CX_m^+} \simeq (p_m')_*K$, 
and $(p_m')_*K$ is written as a direct sum of $(p_m')_*A[i]$.  
Since $\w$ is a finite morphism, $p_m'$ is also finite.  
Again by the decomposition theorem, $(p_m')_*A[i]$ is a 
direct sum of the form $B[j]$ for a simple perverse sheaf 
$B$ on $\CX_m^0$.  Since $p_m'$ is finite, if 
$\dim \supp A < \dim \CZ'_m$, we must have 
$\dim \supp B < \dim \CX_m^0 = \dim \CZ'_m$ for any $B$ appearing 
in $(p_m')_*A[i]$. 
But Proposition 4.8 in [SS]  implies that any simple perverse sheaf 
(up to shift) appearing in the decomposition 
of $(\pi_m)_*\a^*\CE|_{\wt\CX_m^+}$ 
has its support $\CX_m^0$.   It follows that 
$\dim \supp A = \dim \CZ'_m$ for any $A$, and (1.6.8) holds. 

\par
Let $\vD_I': \wt\CX_I \to \CZ_m'$ be the restriction of 
$\vD_m'$ on $\wt\CX_I$.  
Then $\vD_I'$ is proper,  and $\CZ'_m = \bigcup_I\vD_I'(\wt\CX_I)$
gives a decomposition into irreducible components.  
By comparing (1.6.5) and (1.6.8), 
we have 
\begin{equation*}
\tag{1.6.9}
(\vD_I')_*\a^*\CE|_{\wt\CX_I} \simeq H^{\bullet}(\BP_1^{I'})
     \otimes \IC(\vD_I'(\wt\CX_I), q_m^*\CE|_{\vD_I(\wt\CY_I)}).
\end{equation*} 
Put $K_{I,\CE}^{\sharp} = (\vD_I')_*\a^*\CE|_{\wt\CX_I}$. 
Let $q_m': \CZ_m' \to T^{\io\th}$ be the second projection. 
Here we note the isomorphism
\begin{equation*}
\tag{1.6.10}
K^{\sharp}_{I,\CE} \simeq (q_m')^*\CE \otimes K^{\sharp}_{I,\Ql}.
\end{equation*}
In fact by the projection formula, 
\begin{equation*}
\begin{split}
(q'_m)^*\CE \otimes (\vD_I')_*\a^*\Ql
   &\simeq (\vD_I')_*((\vD_I')^*(q_m')^*\CE \otimes \a^*\Ql) \\
   &\simeq (\vD_I')_*(\a^*\CE \otimes \a^*\Ql)
   \simeq (\vD_I')_*\a^*\CE,
\end{split}
\end{equation*}
where $\a^*\CE, \a^*\Ql$ are regarded as local systems on 
$\wt\CX_I$.
Hence (1.6.10) holds. 
\par
The construction of $K^{\sharp}_{I,\CE}$ makes sense for 
any $\th$-stable Borel subgroup $B'$ containing $T$.  We denote 
this complex by $K^{\sharp}_{I,B',\CE}$ to denote the dependence for $B'$.
Since $\pi_I = p'_m\circ \vD'_I$, we have 
$\wt K_{I,B,\CE} \simeq (p'_m)_*K^{\sharp}_{I, B,\CE}$, 
and similarly, 
$\wt K_{\s(I),B',\CE} \simeq (p_m')_*K^{\sharp}_{\s(I),B',\CE}$. 
In a similar way as the construction of 
$\wt\vf_I: F^*\wt K_{\s(I),B',\CE} \isom \wt K_{I,B,\CE}$, one can 
define an isomorphism  
$\F_I : F^*K^{\sharp}_{\s(I),B',\CE} \isom K^{\sharp}_{I, B,\CE}$
such that $(p'_m)_*(\F_I) = \wt\vf_I$ by using (1.6.5) and (1.6.9).
By (1.6.10), $\F_I$ can be expressed as 
\begin{equation*}
\F_I = \F_{\CE} \otimes \F_0 : F^*(q'_m)^*\CE \otimes 
       F^*K^{\sharp}_{\s(I),B',\Ql}  \isom (q'_m)^*\CE\otimes 
                   K^{\sharp}_{I, B,\Ql},
\end{equation*}
where $\F_{\CE} : F^*(q'_m)^*\CE \isom (q'_m)^*\CE$ and 
$\F_0: F^*K^{\sharp}_{\s(I),B',\Ql} \isom K^{\sharp}_{I,B,\Ql}$ are the  
natural maps induced from $F^*\CE \isom \CE$ and $F^*\Ql \isom \Ql$. 
In particular, $\F_0$ is independent of the choice of $\CE$. 
In order to prove (1.6.1), it is enough to show that $\F_I|_{\CZ'\uni}$
is independent of the choice of $\CE$.  (Here  
$\CZ\uni' = \CX\uni \times_{R'} T^{\io\th}$, and $\F|_{\CZ'\uni}$ 
denotes, for a map $\F$ on complexes on $\CZ'_m$, 
the map on the complexes restricted on $\CZ_m' \cap \CZ\uni'$ 
induced from $\F$.)
We have $\F_I|_{\CZ'\uni} \simeq \F_{\CE}|_{\CZ'\uni}\otimes
          \F_0|_{\CZ'\uni}$ and $\F_0|_{\CZ'\uni}$ is independent 
of $\CE$. 
Here $\CZ'\uni \simeq \CX\uni \times_{R'}\{ e\} \simeq \CX\uni$, 
where $e$ is the unit
element in $T^{\io\th}$.  
By definition, $\vf_0: F^*\CE \isom \CE$ gives an identity map on 
the stalk $\CE_e\simeq \Ql$.  It follows that the map 
$\F_{\CE}|_{\CZ'\uni}$ gives an identity map on $\CZ'\uni$, hence 
is independent of $\CE$.  This proves (1.6.1) and so the proposition 
follows.   
\end{proof}

\para{1.7.}
Take $w \in \wt\CW_{\CE} = \CW_{\CE}\ltimes (\BZ/2\BZ)^{n}$.
Under the embedding $\wt\CW_{\CE} \subset \wt\CW \simeq W_n$, 
we regard $w$ as an element of 
$W_n = N_H(T_0^{\th})/T_0^{\th} = N_H(T_0)/T_0^{\th}$, 
and let $T = T_w = hT_0h\iv$ for 
$h \in H$ such that $h\iv F(h) = \dot w \in N_H(T_0)$.  
We consider the complex $K_{T,\CE}$ and the map
$\vf: F^*K_{T,\CE} \isom K_{T,\CE}$.  
Let $\CE_0$ be the tame local system on $T_0^{\io\th}$
defined by $\CE_0 = (\ad h)^*\CE$.  
For later use, we shall describe the map $\vf$ by making 
use of $K_{T_0, \CE_0}$.
We follow the notation in 1.2. 
We write the varieties $\wt\CY^{\bullet}, \wt\CY_I^{\bullet}$, etc. 
in 1.2, as 
$\wt\CY_{T}^{\bullet},  \wt\CY_{I,T}^{\bullet}$, etc. 
 to indicate their dependence on  $T$.
We consider the variety $\wt\CY_{I,T_0}^{\bullet}$, which 
is defined by using $T_0$ and $M_{I,0}$.  Thus 
$\wt\CY_{I,T_0}^{\bullet}$ has a natural Frobenius 
action $F : (x,v, gT^{\th}_0) \mapsto (F(g), F(v), F(g)T^{\th}_0)$.
The map $(x,v,gT^{\th}) \mapsto (x,v, ghT_0^{\th})$ gives
an isomorphism $\d_I : \wt\CY_{I,T}^{\bullet} \to \wt\CY^{\bullet}_{I,T_0}$
commuting with the projections to $\CY_m^0$.
We define a map 
$a_I: \wt\CY_{I, T_0}^{\bullet} \to \wt\CY_{\s(I),T_0}^{\bullet}$
by $(x,v,gT^{\th}_0) \mapsto (x,v, g(\dot\t_I\dw)\iv T_0^{\th})$. 
Note that $a_I$ is well-defined since $\dot\t_I\dw(M_{I,0}) = M_{\s(I),0}$.
Then we have a commutative diagram
\begin{equation*}
\tag{1.7.1}
\begin{CD}
\wt\CY_{I,T}^{\bullet}  @>\d_I>> \wt\CY^{\bullet}_{I, T_0} \\
   @Vb_IF VV                           @VVa_IF V    \\
\wt\CY_{\s(I),T}^{\bullet}  @>\d_{\s(I)}>>  \wt\CY_{\s(I), T_0}^{\bullet}.
\end{CD}
\end{equation*}
Recall the complex $K^{\bullet}_{I,T,\CE}$ in 1.4, 
and define $K^{\bullet}_{I, T_0,\CE_0}$ similarly.
Since $\d_I^*K^{\bullet}_{I,T_0,\CE_0}$ is canonically isomorphic to 
$K^{\bullet}_{I,T,\CE}$, 
the isomorphism $\d_I$ induces an isomorphism 
$\d_I': K^{\bullet}_{I, T,\CE} \isom K^{\bullet}_{I, T_0,\CE_0}$.
By (1.7.1), we have a commutative diagram
\begin{equation*}
\tag{1.7.2}
\begin{CD}
F^*K^{\bullet}_{\s(I), T,\CE} @>F^*\d'_{\s(I)} >> 
                          F^*K^{\bullet}_{\s(I), T_0,\CE_0} \\
@V\vf_IVV                             @VV\vf'_I\circ F^*(\th_I) V  \\
K^{\bullet}_{I, T, \CE}  @>\d'_I>>  K^{\bullet}_{I, T_0, \CE_0},
\end{CD}
\end{equation*}
where $\vf_I': F^*K^{\bullet}_{I,T_0,\CE_0} \to K^{\bullet}_{I,T_0,\CE_0}$ 
is the natural map
induced from the Frobenius map on $\wt\CY^{\bullet}_{I,T_0}$, 
and $\th_I: K^{\bullet}_{\s(I), T_0,\CE_0} \to K^{\bullet}_{I,T_0,\CE_0}$ 
is the canonical isomorphism induced from $a_I$. 
Since $K^{\bullet}_{m, T,\CE} = \bigoplus_IK^{\bullet}_{I, T,\CE}$ and 
similarly for $K^{\bullet}_{m,T_0,\CE_0}$,  
we have an isomorphism 
$\d'_m: K^{\bullet}_{m,T,\CE} \isom K^{\bullet}_{m, T_0,\CE_0}$
induced from various $\d_I'$.  We have a commutative diagram as in (1.7.2)
with respect to $F^*K^{\bullet}_{m,T,\CE}, F^*K^{\bullet}_{m,T_0,\CE_0}$, 
$K^{\bullet}_{m,T\CE}, K^{\bullet}_{m,T_0,\CE_0}$.  
Then the map 
$\vf_m = \sum_I\vf_I  :F^*K^{\bullet}_{m, T,\CE} \isom K^{\bullet}_{m, T,\CE}$ 
is transferred 
to the map $\vf'_m\circ F^*(\th_{w\iv}):  F^*K^{\bullet}_{m, T_0,\CE_0} 
              \isom K^{\bullet}_{m, T_0,\CE_0}$, 
through $\d_m'$ and $F^*\d_m'$, 
where $\vf_m'$ is the isomorphism induced from the Frobenius map on 
$\wt\CY_{m, T_0}^{+,\bullet}$, and $\th_{w\iv} = \sum_I\th_I$. 
\par
Recall that $\CA_{\CE_I} = \End ((\e_I)_*\CE_I)$ and its extended 
algebra $\wt\CA_{\CE_I}$ in [SS, 3.5] for $\CE = \CE_0$. 
Note that $\CA_{\CE_I} \simeq \Ql[\CW_{\CE_I}]$ and 
$\wt\CA_{\CE_I} \simeq \Ql[\wt\CW_{\CE_I}]$. 
In [SS, 3.5], we have defined an algebra homomorphism
$\wt\CA_{\CE_I} \to \End((\psi_I)_*\a_I^*\CE_0)$.  Since 
$(\psi_I)_*\a_I^*\CE \simeq K_{I,T,\CE_0}^{\bullet}$ up to shift, 
we have an algebra homomorphism 
$\wt\CA_{\CE_I} \to \End(K^{\bullet}_{I,T_0,\CE_0})$.  
This action is described as follows; 
for each $w = \s\t \in \wt\CW_I \simeq S_I \ltimes (\BZ/2\BZ)^n$
($\s \in S_I, \t \in (\BZ/2\BZ)^n$),
put $w' = \s\t'$, where $\t'$ is the projection of $\t$ on 
$(\BZ/2\BZ)^{I'}$.  Then the map 
$a_w: (x,v, gT_0^{\th}) \mapsto (x,v, g w'T_0^{\th})$ gives an 
isomorphism on $\wt\CY_I^{\bullet}$, and $w \mapsto a_w$ gives 
a homomorphism $\wt\CW_I \to \Aut (\wt\CY_I^{\bullet})$. 
If $w \in \wt\CW_{\CE_I}$, $a_w$
induces an isomorphism on $K_{I, T_0, \CE}^{\bullet}$, and this gives 
the representation $\wt\CA_{\CE_I} \to \End(K^{\bullet}_{I, T_0, \CE})$.
The representation $\wt\CA_{\CE} \to \End (K_{m, T_0,\CE}^{\bullet})$
is obtained by inducing up the representation 
of $\wt\CA_{\CE_I}$ on $K^{\bullet}_{I,T_0, \CE_0}$ 
for $I = [1,m]$ to $\wt\CA_{\CE}$.
For each $w \in \wt\CW_{\CE}$ we denote by 
$\th_w \in \Aut(K^{\bullet}_{m,T_0,\CE_0})$ the image of $w$ under this map.
Then it is clear that $\th_{w\iv}$ in this context is exactly the same as 
$\th_{w\iv} = \sum_I \th_I$ defined before. 

\par
The isomorphism $\d'_m$ induces an isomorphism 
$K_{m,T,\CE} \isom K_{m,T_0,\CE_0}$. By a similar argument as in 1.3, 
one can define an isomorphism $\d': K_{T,\CE} \isom K_{T_0,\CE_0}$
extending $\d_m'$.
Let $\vf_T$ be the isomorphism $\vf : F^*K_{T,\CE} \isom K_{T,\CE}$, 
and $\vf_{T_0}$ the corresponding isomorphism for $K_{T_0,\CE_0}$. 
The automorphism $\th_w$ on $K^{\bullet}_{m,T_0,\CE_0}$ induces 
a unique automorphism on $K_{T_0,\CE_0}$ which we denote by the 
same symbol. 
Again by a similar argument as in 1.3, we have the 
following commutative diagram (note that $T = T_w$).
\begin{equation*}
\tag{1.7.3}
\begin{CD}
F^*K_{T,\CE} @>F^*\d'>>  F^*K_{T_0,\CE_0} \\
@V\vf_T VV            @VV \vf_{T_0}\circ F^*(\th_{w\iv}) V      \\
K_{T,\CE}   @>\d'>>      K_{T_0,\CE_0}.  
\end{CD}
\end{equation*}

\remark{1.8.}
Under the setup in Section 1 of [SS], we consider 
the complex $\pi_*\a^*\CE[\dim G^{\io\th}]$ on $G^{\io\th}$ 
associated to a tame local system $\CE$ on $T^{\io\th}$, which is
isomorphic to the complex $K_{T,\CE}$ by Theorem 1.16 in [SS].
We denote this $K_{T,\CE}$ as $K^{\sym}_{T,\CE}$ to distinguish 
$K_{T,\CE}$ on $G^{\io\th} \times V$.   
By definition, we have 
\begin{equation*}
\tag{1.8.1}
K^{\sym}_{T,\CE}[2n] \simeq 
   K_{T,\CE}|_{G^{\io\th} \times \{0\}}.
\end{equation*} 
As in the case of $K_{T,\CE}$, one can define $K^{\sym}_{T,\CE}$
for an $F$-stable maximal torus $T$ containing $\th$-stable maximal 
anisotropic torus, and an $F$-stable tame local system $\CE$ on 
$T^{\io\th}$. Then for each $\vf_0: F^*\CE \isom \CE$, one can construct 
an isomorphism $\vf : F^*K^{\sym}_{T,\CE} \isom K^{\sym}_{T,\CE}$, 
and we define a function $\x^{\sym}_{T,\CE}$ on $(G^{\io\th})^F$.
Clearly, $\vf$ is the restriction of the corresponding map for 
$K_{T,\CE}$ under (1.8.1), hence $\x_{T,\CE}^{\sym}$ coincides with 
$\x_{T,\CE}|_{(G^{\io\th} \times \{0\})^F}$. 
In particular, the statement in Proposition 1.6 holds also
for $\x^{\sym}_{T,\CE}$. 
We define a Green function 
$Q_T^{\sym}$ as the restriction of $\x^{\sym}_{T,\CE}$ on 
$(G^{\io\th}\uni)^F$, which is an $H^F$-invariant function on 
$(G^{\io\th}\uni)^F$.

\par\bigskip
\section{Character formulas}   

\para{2.1.}
We follow the notation in 1.2. 
Let $\ol\psi_m : \psi\iv(\CY_m) \to \CY_m$ be the restriction of
$\psi : \wt\CY \to \CY$ on $\psi\iv(\CY_m)$.  $\CY_{m-1}$ 
is a closed subset of $\CY_m$ and let 
$j': \CY_m^0 = \CY_m \backslash \CY_{m-1} \to \CY_m$
be the inclusion map.   Let $\wt\CE_m$ (resp. $\wt\CE'_m$) 
be the restriction of  
$\a_0^*\CE$ on $\psi\iv(\CY_m)$ (resp. on $\wt\CY_m^+$).
Then 
$(j'_!(\psi_m)_*\wt\CE_m', (\ol\psi_m)_*\wt\CE_m, 
   (\ol\psi_{m-1})_*\wt\CE_{m-1})$
is a canonical distinguished triangle in $\DD\CY_m$.
By [SS, (3.6.1), (3.6.3)], 
$(\ol\psi_m)_*\wt\CE_m$, $(\ol\psi_{m-1})_*\wt\CE_{m-1}$
are semisimple complexes, whose simple components are
constructible sheaves with even degree shifts.  
It follows that ${}^pH^i((\ol\psi_m)_*\wt\CE_m) = 0$ for odd $i$, 
where ${}^pH^iK$ is the $i$-th perverse cohomology of $K \in \DD \CY_m$.
The discussion in the proof of Proposition 3.6 in [SS] shows that 
the map 
$R^{2i}(\ol\psi_m)_*\wt\CE_m \to R^{2i}(\ol\psi_{m-1})_*1\wt\CE_{m-1}$
is surjective.   This implies that 
${}^pH^i(j'_!(\psi_m)_*\wt\CE_m') = 0$ for odd $i$. 
Hence the perverse cohomology long exact sequence associated
to the above distinguished triangle gives rise to a short exact 
sequence for each even degree part.   
It follows that ${}^pH^{2i}(j'_!(\psi_m)_*\wt\CE_m')$ is 
a semisimple complex given as 
\begin{equation*}
\tag{2.1.1}
{}^pH^{2i}(j'_!(\psi_m)_*\wt\CE_m') 
    \simeq \bigoplus_{\r \in \CA_{\Bm,\CE}\wg}
         \Ind_{\wt\CA_{\Bm,\CE}}^{\wt\CA_{\CE}}
    (H^{2i}(\BP_1^{n-m})\otimes\r)\otimes\IC(\CY_m,\CL_{\r}),
\end{equation*}
where $\IC(\CY_m,\CL_{\r})$ is a constructible
sheaf. 
Let $\psi^{\bullet} : \wt\CY^{\bullet} \to \CY$ and 
$\psi^{\bullet}_m : \wt\CY^{+\bullet}_m \to \CY_m^0$ 
 be the maps defined in 2.2, and $\ol\psi^{\bullet}_m$ 
be the restriction of $\psi^{\bullet}$
on $(\psi^{\bullet})\iv(\CY_m)$.  By abbreviation we denote 
the restriction of $\a_0^*\CE$ on 
$(\psi^{\bullet})\iv(\CY_m)$ and $\wt\CY_m^{+,\bullet}$ by 
the same symbols $\wt\CE_m, \wt\CE_m'$. 
Then $(j'_!(\psi_m^{\bullet})_*{\wt\CE'_m}, 
           \ol\psi_m^{\bullet}\wt\CE_m, 
               \ol\psi_{m-1}^{\bullet}\wt\CE_{m-1})$
is also a distinguished triangle.
Note that 
$j'_!(\psi_m^{\bullet})_*\wt\CE'_m[-2n]\simeq j'_!(\psi_m)_*\wt\CE_m'$
and $(\ol\psi^{\bullet}_m)_*\wt\CE_m[-2n] \simeq 
          (\ol\psi_m)_*\wt\CE_m$.   
\par
Let $\ol\pi_m$ be the restriction of $\pi: \wt\CX \to \CX$ 
on $\pi\iv(\CX_m)$.  $\CX_{m-1}$ is a closed subset of $\CX_m$,
and let $j$ be the inclusion map from 
$\CX_m^0 = \CX_m \backslash \CX_{m-1}$ to $\CX_m$.
We again denote by $\wt\CE_m$ (resp. $\wt\CE_m'$) the 
restriction of $\a^*\CE$ on $\pi\iv(\CX_m)$ (resp. on $\wt\CX_m^+$).
Then 
$(j_!(\pi_m)_*\wt\CE'_m, (\ol\pi_m)_*\wt\CE_m, 
        (\ol\pi_{m-1})_*\wt\CE_{m-1})$
gives rise to a canonical distinguished triangle on $\CX_m$.
By [SS, (4.9.1)], $(\ol\pi_m)_*\wt\CE_m$ is a semisimple 
complex, whose components are of the form $A[2i]$ for a perverse sheaf 
$A$. 
It follows that ${}^pH^i((\ol \pi_m)_*\wt\CE_m) = 0$ for odd $i$.
We consider the long exact sequence of perverse cohomologies arising 
from the distinguished triangle 
$(j_!(\pi_m)_*\wt\CE'_m, (\ol\pi_m)_*\wt\CE_m, 
        (\ol\pi_{m-1})_*\wt\CE_{m-1})$.
The map ${}^pH^{2i}(j_!(\pi_m)_*\wt\CE_m') \to 
    {}^pH^{2i}((\ol\pi_m)_*\wt\CE_m)$ is injective, 
and 
the map ${}^pH^{2i}((\ol\pi_{m-1})_*\wt\CE_{m-1}) \to 
        {}^pH^{2i+1}(j_!(\pi_m)_*\wt\CE_m')$ is surjective.
Since the restriction of ${}^pH^{2i+1}(j_!(\pi_m)_*\wt\CE_m')$
on $\CY_m$ is zero by the previous discussion, we see that 
${}^pH^{2i+1}(j_!(\pi_m)_*\wt\CE_m') = 0$.
It follows that the long exact sequence turns out to be 
a short exact sequence for ${}^pH^{2i}$-factors.
Thus ${}^pH^{2i}(j_!(\pi_m)_*\wt\CE_m')$ is a semisimple perverse 
sheaf, and by (4.9.1) in [SS], we have
\begin{equation*}
\tag{2.1.2}
{}^pH^{2i}(j_!(\pi_m)_*\wt\CE_m') \simeq 
\bigoplus_{\r \in \CA_{\Bm,\CE}\wg}
         \Ind_{\wt\CA_{\Bm,\CE}}^{\wt\CA_{\CE}}
    (H^{2i}(\BP_1^{n-m})\otimes\r)\otimes\IC(\CX_m,\CL_{\r}),  
\end{equation*}
and ${}^pH^{\odd}(j_!(\pi_m)_*\wt\CE_m') = 0$. 
\par
Put $d_m = \dim \CX_m$, and 
$K_m = j_!(\pi_m)_*\wt\CE_m'[d_m]$, 
$\ol K_m = (\ol\pi_m)_*\wt\CE_m[d_m]$.
Let $\vf: F^*K_{T,\CE} \isom K_{T,\CE}$ be the isomorphism 
defined in 1.3.
Then it follows from the discussion there, $\vf$ induces 
an isomorphism 
$\ol\vf_m: F^*\ol K_m \isom \ol K_m$.
It also induces an isomorphism 
$F^*((\pi_m)_*\wt\CE_m') \isom (\pi_m)_*\wt\CE_m'$.
Since $j$ commutes with $F$, we have an isomorphism 
$F^*K_m \isom K_m$ which is denoted by $\vf_m$. 
Then the above distinguished triangle is compatible with 
the isomorphisms $(\vf_m, \ol\vf_m, \ol\vf_{m-1})$.
By the property of distinguished triangles, we have 
\begin{equation*}
\tag{2.1.3}
(-1)^{d_m}\x_{\ol K_m,\ol\vf_m} = 
(-1)^{d_m}\x_{K_m,\vf_m} + (-1)^{d_{m-1}}\x_{\ol K_{m-1}, \ol\vf_{m-1}}.
\end{equation*}
\par
On the other hand, we consider a natural spectral sequence
\begin{equation*}
E_2^{i,j} = \CH^i({}^pH^jK_m) \Rightarrow \CH^{i+j}K_m
\end{equation*}
in the category of mixed constructible sheaves on $\CX_m$.  Taking 
a stalk $z \in \CX_m$, we get a spectral sequence 
$\CH^i_z({}^pH^jK) \Rightarrow \CH^{i+j}_zK_m$.
The isomorphism $\vf_m: F^*K_m \isom K_m$ induces an isomorphism 
$F^*({}^pH^jK_m) \isom {}^pH^jK_m$ which we denote also by $\vf_m$.
By the above spectral sequence, this implies that 
\begin{equation*}
\x_{K_m,\vf_m} = \sum_{j \ge 0}\x_{{}^pH^jK_m, \vf_m}
\end{equation*} 
Here we note, by [SS, Lemma 3.3], that 
\begin{equation*}
\tag{2.1.4}
d_m = \dim G^{\io\th} + 2m.
\end{equation*}
Hence $d_m-d_{m-1} = 2$, and the characteristic function $\x_{K,\vf}$ for 
$K = K_{T,\CE} = \ol K_n$ can be computed as
\begin{equation*}
\tag{2.1.5}
\x_{K,\vf} = \x_{T,\CE} = \sum_{m = 0}^n \sum_{j \ge 0}\x_{{}^pH^jK_m, \vf_m}.
\end{equation*} 

\para{2.2.}
For a semisimple element $s \in (G^{\io\th})^F$,
we consider $Z_H(s) \times V$.
We follow the notation in [SS, 4.3].  Thus 
$V = V_1\oplus\cdots\oplus V_t$, where $V_i$ is an eigenspace of 
$s$ with $\dim V_i = 2n_i$, and $F$ permutes the eigenspaces. 
$Z_G(s) \simeq G_1 \times \cdots \times G_t$, where 
$G_i \simeq GL(V_i)$ is a $\th$-stable subgroup of $G$
such that $G_i^{\th} \simeq Sp(V_i)$. Hence 
\begin{equation*}
\tag{2.2.1}
(Z_G(s))^{\io\th} \times V \simeq 
        \prod_{i=1}^t (G^{\io\th}_i \times V_i), 
\end{equation*}
and the natural action of $Z_H(s)$ on the left hand side 
is compatible with the natural action of $G_i^{\th}$ on 
$G_i^{\io\th} \times V_i$ of the right hand side under the
isomorphism $Z_H(s) \simeq G_1^{\th} \times\cdots \times G_t^{\th}$.  
\par
Let $T$ be a $\th$-stable, $F$-stable maximal torus of $G$ contained 
in a $\th$-stable (not necessarily $F$-stable) Borel subgroup $B$ of $G$. 
As in [SS, 1.17], we define
\begin{equation*}
\CM_s = \{ g \in H \mid g\iv sg \in B^{\io\th}\},
\end{equation*}
on which $Z_H(s) \times B^{\th}$ 
acts naturally.  The set $Z_H(s)\backslash \CM_s/B^{\th}$ is 
identified with $Z_H(s)\backslash\CM'_s/T^{\th}$ since it is 
labelled by $\vG = W_{H,s'}\backslash W_H$ (see [SS, 1.17]), 
where $\CM_s' = \{ g \in H \mid g\iv sg \in T^{\io\th}\}$.  
Hence $F$ acts naturally  
on this set.  Assume the orbit $\CO_{\g}$ corresponding to 
$\g \in \vG$ is $F$-stable, and we choose a representative
$x_{\g} \in \CO_{\g}^F \subset H^F$.   
Put $B_{\g} = Z_G(s) \cap x_{\g}Bx_{\g}\iv$, and 
$T_{\g} = x_{\g}Tx_{\g}\iv$.  Then $B_{\g}$ is a $\th$-stable 
Borel subgroup of $Z_G(s)$ containing $T_{\g}$ which is a
$\th$-stable maximal torus of $Z_G(s)$. 
Let $M_n$ be a $B^{\th}$-stable maximal isotropic subspace 
of $V$, and put 
$M_n^{\g} = x_{\g}(M_n)$.  Then $M_n^{\g}$ is $s$-stable, 
and is decomposed into eigenspaces of $s$, 
$M_n^{\g} = \bigoplus_{i = 1}^tM_{n_i}^{\g}$, where 
$M_{n_i}^{\g} = M_n^{\g} \cap V_i$ is a maximal isotropic subspace
of $V_i$ stable by $B_{\g}^i$.  (Here we denote by $B_{\g}^i$ the 
$i$-th factor of $B_{\g}$ under the identification 
$Z_G(s) \simeq G_1 \times \cdots \times G_t$). 
We define

\begin{equation*}
\wt\CX'_{\g} = \{ (x,v, gB^{\th}_{\g}) \in Z_G(s)^{\io\th} \times V 
                  \times Z_H(s)/B_{\g}^{\th} \mid 
             g\iv xg \in B^{\io\th}_{\g}, g\iv v \in M_n^{\g}\}, 
\end{equation*}
and a map $\pi_{\g}: \wt\CX'_{\g} \to \CX' = Z_G(s)^{\io\th} \times V$
by $(x,v, gB_{\g}^{\th}) \to (x,v)$.
In view of the decomposition (2.2.1) and the decomposition 
$Z_H(s) \simeq G_1^{\th} \times \cdots \times G^{\th}_t$, the variety
$\wt\CX'_{\g}$ is isomorphic to the product of the varieties 
$\wt\CX'_{\g, i}$, 
which is a similar variety as $\wt\CX$ in 1.2, defined   
with respect to $G^{\io\th}_i \times V_i$, and the maps $\pi_{\g}$
is compatible with the corresponding map for each $\wt\CX'_{\g,i}$.  
Thus one can define a complex $K_{T_{\g}, \CE_{\g}}^{Z_G(s)}$ on 
$\CX'$ similar to $K_{T,\CE}$ on $\CX$ ($\CE_{\g}$ is a tame local system 
on $T_{\g}^{\io\th}$).  The results of Section 1 can be applied for this 
general setting, by replacing $G$ by $Z_G(s)$. 
In particular, since $T_{\g}$ is an $F$-stable maximal torus of $Z_G(s)$, 
one can define a Green function $Q_{T_{\g}}^{Z_G(s)}$ on 
$(Z_G(s)^{\io\th}\uni \times V)^F$.  

\para{2.3.}
For an $F$-stable torus $S$, put $(S^F)\wg = \Hom (S^F, \Ql)$.
We consider the map $\e : S \to S$, by $t \mapsto t\iv F(t)$, 
which gives the Lang covering $S \to S/S^F$. Then $\e_*\Ql$ is 
decomposed as $\e_*\Ql \simeq \bigoplus_{\vth \in (S^F)\wg}\CE_{\vth}$. 
Here $\CE = \CE_{\vth}$ is an $F$-stable tame local system on $S$, and 
it is characterized by the property that the characteristic function 
$\x_{\CE, \vf_0}$ coincides with $\vth$ on $S^F$ up to scalar.
Thus if we choose $\vf_0: F^*\CE \isom \CE$ so that it induces an 
identity on $\CE_e$ (the stalk at the identity element $e \in S^F$), 
we have $\x_{\CE, \vf_0} = \vth$.   
\par
The following result is an analogy of Lusztig's character formula
[L3, Theorem 8.5].
(In the following, we use the notation 
$T^{\io\th, F} = T^{\io\th} \cap T^F$).   

\begin{thm}[Character formula]  
Let $s,u \in (G^{\io\th})^F$ be such that $su = us$, with 
$s$: semisimple and  $u$: unipotent.  
Assume that $\CE$ is an $F$-stable tame local system on $T^{\io\th}$
such that $\CE = \CE_{\vth}$ for $\vth \in (T^{\io\th,F})\wg$. 
Then 
\begin{equation*}
\x_{T,\CE}(su,v) = |Z_H(s)^F|\iv \sum_{\substack{x \in H^F \\
     x\iv sx \in T^{\io\th,F}}}Q_{xTx\iv}^{Z_G(s)}(u,v)\vth(x\iv sx). 
\end{equation*} 
\end{thm}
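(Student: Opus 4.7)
The plan is to adapt Lusztig's approach [L3, Theorem 8.5] for character sheaves on reductive groups to our exotic setting. By [SS, Theorem 4.2], $K_{T,\CE} \simeq \wt K_{B,\CE} = \pi_*\a^*\CE[\dim\CX]$, and since $\pi:\wt\CX\to\CX$ is proper, proper base change computes the stalk cohomology of $K_{T,\CE}$ at $(su,v)$ from the fibre $\pi\iv(su,v)$ with coefficients in the restriction of $\a^*\CE$. Thus $\x_{T,\CE}(su,v)$ is expressed as a Lefschetz-type sum over the $F$-fixed points of this fibre, twisted by the map $\vf$ induced from $\vf_0$.

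Step 1 is to decompose the fibre using the semisimple element $s$. A coset $gB^\th$ lies in $\pi\iv(su,v)$ precisely when $g\iv sug\in B^{\io\th}$ and $g\iv v\in M_n$; extracting the semisimple part forces $g\iv sg\in B^{\io\th}$, hence $g\in\CM_s$. By [SS, 1.17], the set of $F$-stable $Z_H(s)$-$B^\th$ double cosets in $\CM_s$ is in bijection with representatives $x\in H^F$ satisfying $x\iv sx\in T^{\io\th,F}$, modulo the $Z_H(s)^F$-action. Each such $x$ labels a locally closed piece $Y_x$ of $\pi\iv(su,v)$ consisting of the cosets $gB^\th$ with $g\in Z_H(s)\,x\,B^\th$.

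Step 2 is to identify $Y_x$ with a fibre of the analogous map for $Z_G(s)$. Writing $g=zxb$ with $z\in Z_H(s)$ and $b\in B^\th$, set $T_x=xTx\iv$, $B_x=xBx\iv$, $M_n^x=x(M_n)$, and $\CE_x=(\ad x\iv)^*\CE$. Since $s\in T_x$, the torus $T_x$ lies in $Z_G(s)$ and $B_x\cap Z_G(s)$ is a $\th$-stable Borel subgroup of $Z_G(s)$ containing it; after restricting $M_n^x$ to the eigenspace decomposition $V=\bigoplus V_i$ of (2.2.1), this matches the data of 2.2. A direct computation using $z\in Z_H(s)$ and $b\in B^\th$ shows that the conditions defining $Y_x$ translate into $z\iv uz\in(B_x\cap Z_G(s))^{\io\th}$ and $z\iv v\in M_n^x$, exhibiting an $F$-equivariant isomorphism of $Y_x$ with the fibre $\pi_x\iv(u,v)$ of the map $\pi_x:\wt\CX'_x\to Z_G(s)^{\io\th}\times V$ attached to this data. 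Moreover, since $b\in B^\th$ fixes $x\iv sx$ modulo the unipotent radical, $p(g\iv sg)$ is constantly equal to $x\iv sx$ along $Y_x$, so that the stalk of $\a^*\CE$ on $Y_x$ splits as a tensor product of $\CE_{x\iv sx}$ with the corresponding stalk on $\wt\CX'_x$.

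Step 3 is to take Frobenius traces and sum. The semisimple factor contributes $\vth(x\iv sx)$ by the normalization of $\vf_0$ fixed in 1.5, while the $Z_G(s)$-factor contributes $\x_{K^{Z_G(s)}_{T_x,\CE_x}}(u,v)$, which equals the Green function $Q^{Z_G(s)}_{xTx\iv}(u,v)$ by Proposition 1.6, since $(u,v)$ lies in $(Z_G(s)^{\io\th}\uni\times V)^F$ and the restriction is in particular independent of $\CE_x$. Summing the products $\vth(x\iv sx)\,Q^{Z_G(s)}_{xTx\iv}(u,v)$ over all $x\in H^F$ with $x\iv sx\in T^{\io\th,F}$, and dividing by $|Z_H(s)^F|$ to compensate for the $Z_H(s)^F$-overcounting within each orbit, yields the stated formula. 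The principal obstacle is the $F$-equivariant geometric identification $Y_x\simeq\pi_x\iv(u,v)$ of Step 2, together with the compatibility of $\a^*\CE$ and its Frobenius structure under this isomorphism; the delicacy is that $g\iv sg$ only equals $x\iv sx$ up to conjugation by the $B^\th$-factor, so one must verify that the induced local system and Frobenius structure split cleanly as predicted, by careful use of the parametrization of [SS, 1.17] and of the $\th$-stability of the Borel datum.
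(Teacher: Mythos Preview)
Your plan has a genuine gap at Step 1. The $\th$-stable Borel $B$ containing $T$ is in general \emph{not} $F$-stable: in the notation of 1.2 one has $B = hB_0h\iv$ with $h\iv F(h) = \dot w$, so $F(B) = h\dot w B_0\dot w\iv h\iv \ne B$ unless $w = 1$. Hence $\wt\CX$ and the fibre $\pi\iv(su,v)$ carry no natural Frobenius endomorphism, and there is no ``Lefschetz-type sum over the $F$-fixed points of this fibre'' available. Proper base change does identify the stalk $\CH^i_{(su,v)}K_{T,\CE}$ with $H^i_c(\pi\iv(su,v),\a^*\CE)$, but the map $\vf$ whose trace you must compute was constructed in 1.2--1.3 by passing first to $\wt\CY^{\bullet}$ (over the regular locus, using $H/T^{\th}$ and a modified Frobenius $F'$) and then to the IC decomposition (1.3.1); it is not induced by any geometric Frobenius on $\pi\iv(su,v)$. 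The obstacle you flag at the end of Step 3 is therefore more serious than a compatibility check: there is no Frobenius on the fibre to be compatible with.

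The paper (following Lusztig) proceeds differently and never computes on a single fibre. It first writes $\x_{T,\CE} = \sum_m\sum_j \x_{{}^pH^jK_m,\vf_m}$ via the perverse filtration (2.1.5), where each ${}^pH^jK_m$ is a sum of IC sheaves on $\CX_m$ by (2.1.2). It then constructs (Lemma 2.5) an open neighbourhood $\CU \ni e$ in $Z_G(s)^{\io\th}$ and shows that over $s\CU$ the variety $\wt\CX_{m,\CU}$ breaks into pieces indexed by $\vG = Z_H(s)\backslash\CM_s/B^{\th}$, each isomorphic to the analogous object for $Z_G(s)$ (see (2.6.1), (2.6.2)). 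From this one gets an isomorphism (2.6.6) of semisimple perverse sheaves on an open dense smooth subset $\CV$, where the modified Frobenius $F'$ of 1.2 is available and the compatibility with $\vf$ is visible; this isomorphism then extends \emph{uniquely} to all of $\CX_{m,\CU}$ as (2.6.7), because IC extensions from an open dense smooth set are unique. That uniqueness is exactly what transports the Frobenius compatibility from the regular locus to the point $(su,v)$, without ever needing a Frobenius on $\pi\iv(su,v)$. Taking stalks of (2.6.7) at $(su,v)$, checking $\vf'_{m,\g} = \vth(x_{\g}\iv sx_{\g})\vf_{m,\g}$, and summing over $m,j,\g$ yields the formula.
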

\par\bigskip
The proof of the theorem will be done by chasing 
Lusztig's arguments step by step.   We shall give 
an outline of the proof below.   First we need a lemma.

\begin{lem}  
Let $T \subset B$ be as in 2.2. For a semisimple element 
$s \in (G^{\io\th})^F$, there exists an open subset $\CU$ of 
$Z_G(s)^{\io\th}$ such that $e \in \CU$ and satisfying the 
following properties;
\begin{enumerate}
\item 
$g\CU g\iv = \CU$ for any $g \in Z_H(s)$, 
\item
$x \in \CU$ if and only if $x_s \in \CU$, 
\item
$F\CU = \CU$, 
\item
If $x \in \CU, g \in H, g\iv sxg \in B^{\io\th}$, 
then $g\iv x_sg \in B^{\io\th}$, and $g\iv sg \in B^{\io\th}$.
\item
If $x \in \CU, g \in H, g\iv sxg \in T^{\io\th}$, then 
$g\iv x_sg \in T^{\io\th}$ and $g\iv sg \in T^{\io\th}$.   
\end{enumerate}
\end{lem}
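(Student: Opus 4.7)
The construction of $\CU$ uses a spectral openness condition. Write the eigenspace decomposition $V = V_1 \oplus \cdots \oplus V_t$ of $s$ with distinct eigenvalues $\a_1, \ldots, \a_t$, so that $Z_G(s) \simeq \prod_i GL(V_i)$. I propose to take
\begin{equation*}
\CU = \{\, x \in Z_G(s)^{\io\th} \mid \det(\lambda\cdot\id - sx|_{V_i}) \text{ and } \det(\lambda\cdot\id - sx|_{V_j}) \text{ are coprime for all } i \ne j \,\}.
\end{equation*}
Equivalently, $x \in \CU$ iff $\a_i \m \ne \a_j \m'$ whenever $i \ne j$, $\m$ is an eigenvalue of $x|_{V_i}$, and $\m'$ an eigenvalue of $x|_{V_j}$. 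Coprimality of two monic polynomials is the non-vanishing of their resultant, a regular function of the coefficients, so $\CU$ is open; and $e \in \CU$ since $\det(\lambda\cdot\id - s|_{V_i}) = (\lambda - \a_i)^{\dim V_i}$ and the $\a_i$ are distinct.

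Properties (i)--(iii) then follow at once from the formulation. Property (ii) is immediate since the characteristic polynomials depend only on $x_s$. Any element of $Z_H(s)$ preserves each $V_i$, so conjugation by it preserves each characteristic polynomial, giving (i). For (iii), $F$ permutes the $V_i$ according to the permutation $\s$ determined by $F(\a_i) = \a_{\s(i)}$; since $F$ acts injectively on closed points, the coprimality condition is preserved under $F$.

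The content of the lemma is (iv) and (v). Suppose $x \in \CU$ and $g\iv s x g \in B^{\io\th}$. The semisimple part $g\iv s x_s g$ lies in $B^{\io\th}$: uniqueness of Jordan decomposition shows that the semisimple and unipotent parts of an element of $G^{\io\th}$ again lie in $G^{\io\th}$, and the Jordan parts of an element of $B$ lie in $B$. By the standard fact that any semisimple element of $B$ is conjugate under the unipotent radical of $B$ to an element of $T$, I choose $u$ in the unipotent radical of $B$ so that $h := gu$ satisfies $h\iv s x_s h \in T$. Then $h\iv s x_s h$ is diagonal, and its eigenvalues are of the form $\a_i \m_{i,k}$ with $\m_{i,k}$ the distinct eigenvalues of $x_s|_{V_i}$. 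The defining condition of $\CU$ forces each distinct eigenvalue of $h\iv s x_s h$ to determine a unique index $i$; hence each eigenspace of $h\iv s x_s h$, being a direct sum of $T$-weight lines, lies inside $h\iv V_i$ for this unique $i$, and assembling across eigenvalues shows each $h\iv V_i$ is itself a direct sum of $T$-weight lines. Since $h\iv s h$ acts as the scalar $\a_i$ on each such $h\iv V_i$, it is diagonal in the standard basis of $V$, hence lies in $T$; the same argument gives $h\iv x_s h \in T$. Conjugating back by $u$ yields $g\iv s g,\ g\iv x_s g \in u T u\iv \subset B$. The $\io\th$-condition is automatic from $g \in H$ and $s, x_s \in G^{\io\th}$ via $\th(g\iv s g) = g\iv \th(s) g = g\iv s\iv g = (g\iv s g)\iv$. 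This proves (iv); property (v) follows by running the same argument with $u = 1$ and $B$ replaced by $T$.

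The main subtle point is formulating the openness condition defining $\CU$ so that the $V_i$-block structure of $s$ is recoverable from the spectral data of $sx_s$ after an arbitrary conjugation into $T$; once this is in place, the remainder reduces to standard Jordan decomposition and the conjugacy of semisimple elements of $B$ into $T$, with the $\io\th$-structure automatic because $g \in H$.
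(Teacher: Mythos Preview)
Your proof is correct, and the set $\CU$ you define coincides with the paper's: the paper takes $\CU = s^{-1}\CU' \cap Z_G(s)^{\io\th}$ where $\CU' = \{\,y \in G \mid Z_G(y_s) \subset Z_G(s)\,\}$, and for $y = sx$ with $x \in Z_G(s)$ the inclusion $Z_G(sx_s) \subset Z_G(s)$ holds exactly when every eigenspace of $sx_s$ lies in a single $V_i$, i.e.\ exactly your coprimality condition on the $\det(\lambda - sx|_{V_i})$. Your resultant description makes openness transparent, while the paper's centralizer description compresses the key step of (v) into one line: from $g^{-1}sx_sg \in T$ one gets $T \subset Z_G(g^{-1}sx_sg) \subset Z_G(g^{-1}sg)$, hence $g^{-1}sg \in Z_G(T) = T$. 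This is the abstract form of your argument that each $h^{-1}V_i$ is a sum of $T$-weight lines. The paper then deduces (iv) from (v) by conjugating $g^{-1}sx_sg \in B^{\io\th}$ into $T^{\io\th}$ by some $b \in B^{\th}$; your route through an arbitrary $u$ in the unipotent radical of $B$, followed by a separate check of the $\io\th$ condition using $g \in H$, is an equally valid and slightly more elementary variant.
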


\begin{proof}
Put $\CU' = \{ x\in G \mid Z_G(x_s) \subset Z_G(s)\}$.  
Then $\CU'$ is stable by the conjugation action of $Z_H(s)$.
We define $\CU$ by $\CU = s\iv \CU' \cap Z_G(s)^{\io\th}$.
Then $\CU$ is an open subset of $Z_G(s)^{\io\th}$ containing 
$e$, and satisfies the condition (i), (ii), (iii). 
Assume that $x \in \CU, g \in H, g\iv sxg \in T^{\io\th}$. 
Then $g\iv sx_sg \in T^{\io\th} \subset T$.  Since $sx \in \CU'$, we have
$Z_G(g\iv sx_s g) \subset Z_G(g\iv sg)$, and so $T \subset Z_G(g\iv sg)$.
This implies that $g\iv sg \in T$ and $g\iv x_sg \in T$, thus (v) follows.
Next assume that $x \in \CU, g\in H, g\iv sxg \in B^{\io\th}$. 
Then $g\iv sx_s g \in B^{\io\th}$.
There exists $b \in B^{\th}$ such 
that $b\iv (g\iv sx_s g)b \in T^{\io\th}$.
By (v), we have $b\iv(g\iv sg)b \in T^{\io\th}$ and 
$b\iv(g\iv x_sg)b \in T^{\io\th}$.  
Hence $g\iv sg \in B^{\io\th}$ and  $g\iv x_sg \in B^{\io\th}$,
which proves (iv).  
\end{proof}

\para{2.6.}
For a fixed $m$, and an orbit $\CO_{\g}$ in 2.2, we define varieties

\begin{align*}
\CX_{m,\CU} &= \{ (sx,v) \in \CX_m^0 \mid x \in \CU\} \\
\wt\CX_{m,\CU} &= \{ (sx, v, gB^{\th}) \in \wt\CX^+_m
                          \mid x \in \CU\}, \\
\wt\CX_{m, \CU, \g} &= \{ (sx, v, gB^{\th}) \in \wt\CX_{m,\CU}
                            \mid g \in \CO_{\g} \}.  
\end{align*}
Let $p : \wt\CX_{m,\CU} \to H/B^{\th}$ be the projection on 
the third factor.  Then by Lemma 2.5 (iv), $\Im p$ is a subset of 
$\{ gB^{\th} \in H/B^{\th} \mid g\iv sg \in B^{\io\th}\}$, on which 
$Z_H(s)$ acts as a left multiplication, 
and the set of orbits is in bijection with $\vG$. 
Moreover, each orbit is open and closed, and $\wt\CX_{m,\CU,\g}$
coincides with the inverse image of $p$ of an orbit corresponding to $\g$.
It follows that 
\begin{equation*}
\tag{2.6.1}
\wt\CX_{m,\CU} = \coprod_{\g \in \vG}\wt\CX_{m,\CU,\g},
\end{equation*}
and $\wt\CX_{m,\CU,\g}$ is open and closed in $\wt\CX_{m,\CU}$.
The subvariety $\wt\CX'^+_{m,\g}$ of $\wt\CX'_{\g}$ is 
defined in a similar way as $\wt\CX^+_m$.
We define
\begin{equation*}
\wt\CX'_{m,\CU,\g} = \{ (x,v,gB^{\th}_{\g}) \in \wt\CX'^+_{m,\g}
              \mid x \in \CU\}.
\end{equation*}
Then we have
\par\medskip\noindent
(2.6.2) \ the map $(x,v, gB^{\th}_{\g}) \mapsto (sx, v, gx_{\g}B^{\th})$
gives an isomorphism $\wt\CX'_{m,\CU,\g} \isom \wt\CX_{m,\CU,\g}$.
\par\medskip
The proof is similar to [L3].
Put $\CX'_{m,\CU,\g} = \pi_{\g}(\wt\CX'_{m,\CU,\g})$, and 
we define $\CX'_{m,s\CU, \g}$ by 
$\CX'_{m,s\CU,\g} = 
  \{ (sx, v) \in s\CU \times V \mid (x,v) \in \CX'_{m,\CU, \g}\}$. 
Recall the map 
$\psi^{\bullet}_m : \wt\CY^{+,\bullet}_m \to \CY_m^0$
in 1.2. 
We define varieties
\begin{align*}
\CY_{m,\CU} &= \{ (sx,v) \in \CY_m^0 \mid x \in \CU\}, \\ 
\wt\CY^{\bullet}_{m,\CU} &= (\psi^{\bullet}_m)\iv (\CY_{m,\CU}), \\
\wt\CY^{\bullet}_{m,\CU,\g} &= \{ (sx, v, gT^{\th}) \in 
    \wt\CY^{\bullet}_{m,\CU} \mid g \in \CO_{\g}\}.
\end{align*}
Then the map $(x,v, gT^{\th}) \mapsto (x,v, gB^{\th})$ gives
rise to a vector bundle 
$\wt\CY^{\bullet}_{m,\CU} \to \pi\iv(\CY_{m,\CU})$ 
with fibre isomorphic to $U^n_2$, where 
$\pi\iv(\CY_{m,\CU})$ is an open dense subset of $\wt\CX_{m,\CU}$
and $U_2$ is the maximal unipotent subgroup of $SL_2$. 
Then $\CY_{m,\CU}$ is open dense in $\CX_{m,\CU}$.
As in [L3], we have
\begin{equation*}
\wt\CY^{\bullet}_{m,\CU} = \coprod_{\g \in \vG}\wt\CY^{\bullet}_{m,\CU,\g},
\end{equation*}
where $\wt\CY^{\bullet}_{m,\CU,\g}$ is a non-empty, open and closed 
subset of $\wt\CY^{\bullet}_{m,\CU}$. 
Put $\CY_{m,\CU,\g} = \psi^{\bullet}_m(\wt\CY^{\bullet}_{m,\CU, \g})$. 
Then for $\g, \g' \in \vG$, $\CY_{m,\CU,\g}$ and 
$\CY_{m,\CU,\g'}$ are either disjoint or coincide. 
They coincide if and only if $\g, \g'$ are in the same
$W_{H,m}$ orbit in $\vG$, where 
$W_{H,m} = \{ w\in W_H \mid w(M_m) = M_m\}$ acts on $\vG$
from the right.   Moreover, 
$\CY_{m,\CU} = \bigcup_{\g \in \vG}\CY_{m,\CU,\g}$ gives 
a decomposition of $\CY_{m,\CU}$ into irreducible components. 
\par
Let $\CY' = Z_G(s)^{\io\th}\reg \times V$, and put
\begin{equation*}
\wt\CY'^{\bullet}_{\g} = \{ (x,v, gT^{\th}_{\g}) \in 
     Z_G(s)^{\io\th}\reg \times V \times Z_H(s)/T^{\th}_{\g} 
   \mid g\iv xg \in T_{\g}^{\io\th}, g\iv v \in M_n^{\g} \}.
\end{equation*}
We define a map $\psi_{\g}: \wt\CY'^{\bullet}_{\g} \to \CY'$ by 
$(x,v, gT^{\th}_{\g}) \mapsto (x,v)$. 
The subvariety $\wt\CY'^{+,\bullet}_{m,\g}$ of 
$\wt\CY'^{\bullet}_{\g}$ is defined 
in a similar way as $\wt\CY^{+,\bullet}_m$ for $\wt\CY^{\bullet}$,  
We define 
\begin{equation*}
\wt\CY'_{m,\CU, \g} = \{(x,v, gT^{\th}_{\g}) \in \wt\CY'^{+,\bullet}_{m,\g} 
                     \mid x \in \CU\},
\end{equation*}
and put $\CY'_{m,\CU,\g} = \psi_{\g}(\wt\CY'_{m,\CU,\g})$.
Then the map $(x,v, gT^{\th}_{\g}) \mapsto (x,v, gB^{\th}_{\g})$
gives rise to a vector bundle  
$\wt\CY'_{m,\CU,\g} \to \pi_{\g}\iv(\CY'_{m,\CU,\g})$ with fibre 
isomorphic to $U_2^n$.
We also define 
\begin{equation*}
\CY'_{m, s\CU, \g} = \{ (sx,v) \in s\CU \times V 
                          \mid (x,v) \in \CY'_{m,\CU, \g} \}. 
\end{equation*}
For each $W_{H,m}$-orbit $Z$ in $\vG$, we define an open 
subset $\CV_Z$ of $\CY_{m,\CU}$ by 
\begin{equation*}
\CV_Z = \bigcap_{\g \in Z}(\CY'_{m,s\CU,\g} \cap \CY_{m,\CU,\g}).
\end{equation*}
Since $\CY'_{m,\CU,\g}$ is open dense in $\CX'_{m,\CU,\g}$,  
$\CY'_{m,s\CU,\g}$ is open dense in $\CX'_{m,s\CU,\g}$. 
Moreover, $\CY_{m,\CU,\g}$ is open dense in $\CX'_{m,s\CU,\g}$.
It follows that $\CY'_{m,s\CU,\g} \cap \CY_{m,\CU,\g}$ is open dense 
in $\CX'_{m,s\CU,\g}$.  In particular, $\CV_Z$ is an open dense 
subset of $\CY'_{m,s\CU,\g}$.  Here $\CY'_{m,\CU, \g}$  is 
open dense in ${\CY'}^0_{m,\g}$, where ${\CY'}^0_{m,\g}$ is a similar 
variety as $\CY^0_m$ defined for $Z_G(s)^{\io\th} \times V$.  
Hence $\CV_Z$ is smooth, irreducible, and by [SS, Lemma 3.3], we have
$\dim \CV_Z = \dim Z_G(s)^{\io\th} + 2m$, which is independent 
of $Z$.  Moreover, we have $F(\CV_Z) = \CV_{F(Z)}$.
Put $\CV = \bigcup_{Z}\CV_Z$, where $Z$ runs over all $W_{H,m}$-orbits
in $\vG$. Then 
\par\medskip\noindent
(2.6.3) \  
$\CV$ is an open dense smooth equidimensional subset of $\CY_{m,\CU}$
and $F(\CV) = \CV$.   Moreover, $\{\CV_Z\}$ gives the set of 
irreducible components in $\CV$.
\par\medskip
We have a commutative diagram

\begin{equation*}
\tag{2.6.4}
\begin{CD}
\wt\CY_m^{+,\bullet}|_{\CV} @<\wt\ve<< 
          \coprod_{\g \in \vG}(\wt\CY'^{+,\bullet}_{m,\g}|_{s\iv\CV})  \\  
@VVV    @VVV  \\
\CV  @<\ve<<   s\iv\CV,
\end{CD}
\end{equation*}
where
\begin{align*}
\wt\CY_m^{+,\bullet}|_{\CV} &= 
  \{ (sx,v, gT^{\th}) \in \wt\CY^{+,\bullet}_m \mid (sx,v) \in \CV\}, \\
\wt\CY'^{+,\bullet}_{m,\g}|_{s\iv\CV} &=
   \{ (x,v, gT_{\g}^{\th}) \in \wt\CY'^+_{m,\g} \mid 
                  (sx,v) \in \CV_Z\} \quad (\g \in Z), \\
s\iv\CV &= \{ (x,v) \in \CU \times V \mid (sx,v) \in \CV\},  
\end{align*}
and the map $\wt\ve$ is an isomorphism given by 
$(x,v, gT_{\g}^{\th}) \mapsto (sx, v, gx_{\g}T^{\th})$, 
$\ve$ is an isomorphism given by 
$(x,v) \mapsto (sx,v)$.
Moreover, the vertical maps are projections to the first and
the second factors.
\par
Recall the modified Frobenius map 
$F': \wt\CY_m^{+,\bullet} \to \wt\CY_m^{+,\bullet}$
given in 1.2, and consider the corresponding map $F'$ on 
$\wt\CY'^{+,\bullet}_{m,\g}$.  Then the varieties in the upper row 
in (2.6.4) are $F'$-stable, those in the lower row are 
$F$-stable, and all the 
maps are compatible with $F, F'$ actions. 
It follows that we have a canonical isomorphism of semisimple 
complexes (see (2.1.1))
\begin{equation*}
\tag{2.6.5}
\ve^*((j'_!(\psi_m^{\bullet})_*\wt\CE_m')|_{\CV}) \simeq 
\bigoplus_{\g \in \vG}((j'_{\g})_!(\psi^{\bullet}_{m,\g})_*
                \wt\CE'_{m,\g})|_{s\iv \CV}
\end{equation*}
and this isomorphism is compatible with the lifting of Frobenius maps
induced from $\vf_0: F^*\CE \isom\CE$, where 
$j'_{\g}, \psi^{\bullet}_{m,\g}, \wt\CE'_{m,\g}$ 
are the objects for $\wt\CY'^{+,\bullet}_{m,\g}$ corresponding 
to $j', \psi_m^{\bullet}, \wt\CE_m'$ for $\wt\CY_m^+$.
Let $K_m$ be as in 2.1, and let
$K_{m,\g} = 
(j_{\g})_!(\pi_{m,\g})_*\wt\CE'_{m,\g}[d_{m,\g}]$ the corresponding 
object on $\CX'_{m,\g}$, where $j_{\g}, \pi_{m,\g}, d_{m,\g}$ are defined 
similar to $j, \pi_m, d_m$. 
Then by (2.1.1), (2.1.3), and by the corresponding formulas for 
$K_{m,\g}$, the isomorphism in (2.6.5) can be regarded as an isomorphism 
\begin{equation*}
\tag{2.6.6}
\ve^*({}^pH^j(K_m)|_{\CV})[-\d] \simeq 
    \bigoplus_{\g \in \vG}{}^pH^j(K_{m,\g})|_{s\iv\CV}
\end{equation*}
for each $j$, 
where $\d = d_m - d_{m,\g} = \dim G^{\io\th} - \dim Z_G(s)^{\io\th}$
(see (2.1.4)).
We note that the isomorphism in (2.6.6) is 
the restriction of an isomorphism 
\begin{equation*}
\tag{2.6.7}
\ve^*({}^pH^j(K_m)|_{\CX_{m, \CU}})[-\d] \simeq \bigoplus_{\g \in \vG}
           {}^pH^j(K_{m,\g})|_{\CX'_{m, \CU, \g}},
\end{equation*}
where $\ve$ is an isomorphism from $\CX'_{m,\CU,\g}$ into 
$\CX_{m,\CU}$ given by $(x,v) \mapsto (sx,v)$.
In fact, since  
\begin{align*}
K_m|_{\CX_{m,\CU}} &\simeq j_!(\pi_m)_*(\a^*\CE|_{\wt\CX_{m,\CU}}), \\
K_{m,\g}|_{\CX'_{m,\CU,\g}} &\simeq 
  (j_{\g})_!(\pi_{m,\g})_*(\a_{\g}^*\CE|_{\wt\CX'_{m,\CU,\g}}),
\end{align*}
we obtain the isomorphism in (2.6.7) from (2.6.1) and (2.6.2),
which is clearly compatible with the isomorphism in (2.6.6).
\par
Now the complex ${}^pH^jK_m$ can be written as in (2.1.2),  and a similar
formula holds for ${}^pH^jK_{m,\g}$.  In particular, the right hand side 
of (2.6.7) can be given, as a semisimple complex, 
 by the intersection cohomologies obtained from local systems 
on the open dense smooth equidimensional subset 
of $s\iv\CV$.  Thus the isomorphism in (2.6.6) is uniquely extended 
to the isomorphism in (2.6.7), which is automatically compatible with 
the lifting of the Frobenius maps.
By (2.6.7), we obtain a relation of the stalks of the cohomology sheaves
for each $i$;
\begin{equation*}
\CH^{i- \d}_{(su,v)}{}^pH^jK_m \simeq 
 \bigoplus_{\g \in \vG}\CH^i_{(u,v)}{}^pH^jK_{m,\g}.
\end{equation*}
Under this isomorphism, the canonical map 
$\vf_m : F^*K_m \isom K_m$ on the left hand side induces an isomorphism
$\vf'_{m,\g}: F^*K_{m,\g} \isom K_{m,\g}$ for each $\g$ on the right hand
side.   Thus we have 
\begin{equation*}
\x_{{}^pH^jK_m,\vf_m}(su,v) = \sum_{\substack{\g \in \vG \\
               F(\g) = \g}}\x_{{}^pH^jK_{m,\g}, \vf'_{m,\g}}(u,v)
\end{equation*}  
for each $j$.
(Note that $\d = \dim G^{\io\th} - \dim Z_G(s)^{\io\th}$ is 
even since $\dim G^{\io\th} = 2n^2 -n$ by [SS, Lemma 1.9], and 
similarly $\dim Z_G(s)^{\io\th} = \sum_{i=1}^t(2n_i^2 - n_i)$ by 2.2.) 
Let $\vf_{m,\g}: F^*K_{m,\g} \isom K_{m,\g}$ be the canonical
isomorphism.  Then one can check that 
$\vf'_{m,\g} = \vth(x_{\g}\iv sx_{\g})\vf_{m,\g}$.
By using (2.1.5), we have
\begin{equation*}
\x_{T, \CE}(su,v) = \sum_{\substack{\g \in \vG \\ F(\g) = \g}}
         Q_{T_{\g}}^{Z_G(s)}(u,v)\vth(x_{\g}\iv sx_{\g}).          
\end{equation*}
Since $|\CO_{\g}^F| = |Z_H(s)^F||T^{\th F}||T_{\g}^{\th F}|\iv
      = |Z_H(s)^F|$ for an $F$-stable $\CO_{\g}$, we obtain 
the required formula.  The theorem is proved. 

\par\medskip

In view of Remark 1.8, we obtain the character formula for 
$\x^{\sym}_{T,\CE}$ as a corollary to Theorem 2.4.

\begin{cor}[Character formula for $\x^{\sym}_{T,\CE}$]  
Let the notations be the same as in Theorem 2.4.  Then we have 
\begin{equation*}
\x^{\sym}_{T,\CE}(su) = |Z_H(s)^F|\iv 
   \sum_{\substack{x \in H^F \\ x\iv sx \in T^{\io\th,F}}}
        Q^{Z_G(s), \sym}_{xTx\iv}(u)\vth(x\iv sx),
\end{equation*}
where $Q_{xTx\iv}^{Z_G(s),\sym}$ is the Green function 
$Q_{xTx\iv}^{\sym}$ for $Z_G(s)$. 
\end{cor}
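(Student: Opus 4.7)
The plan is to deduce the corollary directly from Theorem 2.4 by specializing to $v = 0$, using Remark 1.8 to translate between the ``full'' setting on $\CX = G^{\io\th} \times V$ and the ``symmetric'' setting on $G^{\io\th}$. More precisely, I would exploit the isomorphism $K^{\sym}_{T,\CE}[2n] \simeq K_{T,\CE}|_{G^{\io\th} \times \{0\}}$ from (1.8.1), together with the compatibility of the lifts $\vf$ of the Frobenius established in Remark 1.8, which yields the identification of characteristic functions
\begin{equation*}
\x^{\sym}_{T,\CE}(y) = \x_{T,\CE}(y, 0) \qquad \text{for all } y \in (G^{\io\th})^F.
\end{equation*}

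Next I would apply Theorem 2.4 at the point $(su, 0) \in \CX^F$ to obtain
\begin{equation*}
\x_{T,\CE}(su, 0) = |Z_H(s)^F|\iv \sum_{\substack{x \in H^F \\ x\iv sx \in T^{\io\th,F}}} Q_{xTx\iv}^{Z_G(s)}(u, 0) \vth(x\iv sx).
\end{equation*}
The remaining step is to identify $Q_{xTx\iv}^{Z_G(s)}(u, 0)$ with $Q_{xTx\iv}^{Z_G(s), \sym}(u)$. For this I invoke Remark 1.8 again, but now applied to the group $Z_G(s)$ in place of $G$: the decomposition $Z_G(s) \simeq G_1 \times \cdots \times G_t$ with $G_i \simeq GL(V_i)$ and $G_i^{\th} \simeq Sp(V_i)$, together with the compatibility (2.2.1), ensures that the construction of the Green function and of its symmetric-space analogue factor through the product, so Remark 1.8 is valid verbatim on each factor and hence on $Z_G(s)$. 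Combining these identifications gives the stated formula.

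The only point requiring some care is the compatibility of the Frobenius lifts $\vf$ with the restriction to $\{v = 0\}$, both for $G$ and for each centralizer $Z_G(s)$; this is precisely what Remark 1.8 asserts, and it rests on the fact that the isomorphism (1.8.1) is canonical in the construction of $K_{T,\CE}$. Once this is granted, the corollary is a formal specialization of Theorem 2.4, so I do not anticipate any substantive obstacle beyond verifying that each $G_i$-factor in $Z_G(s)$ falls within the scope in which Remark 1.8 was established.
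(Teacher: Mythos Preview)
Your proposal is correct and follows precisely the approach the paper takes: the paper deduces the corollary from Theorem 2.4 ``in view of Remark 1.8,'' which is exactly your specialization to $v=0$ together with the identification $Q^{Z_G(s)}_{xTx\iv}(u,0)=Q^{Z_G(s),\sym}_{xTx\iv}(u)$ via Remark 1.8 applied to $Z_G(s)$. You have simply made explicit the two uses of Remark 1.8 that the paper leaves implicit.
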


\par\bigskip
\section{Orthogonality relations}

\para{3.1.}
Let $T$ be a $\th$-stable maximal torus of $G$ contained in 
a $\th$-stable Borel subgroup of $G$. Assume that $T$ is $F$-stable, 
and let $\CE$ be an $F$-stable tame local system on $T^{\io\th}$.
We consider the complex $K_{T,\CE}$ associated to the pair 
$(T,\CE)$ as in 1.3.  Let $\x_{T,\CE}$ and  
$Q_T$ be the functions defined in 1.5.  We also consider similar 
objects $\x^{\sym}_{T,\CE}, Q_T^{\sym}$ with respect to 
the symmetric space as in Remark 1.8.  In this 
section, we prove the orthogonality relations for these functions.
First we prepare a lemma.

\begin{lem}  
Let $T, T'$ be $\th$-stable maximal tori of $G$ as above (forgetting 
the $\Fq$-structure), and $\CE, \CE'$ 
tame local systems on $T^{\io\th}, {T'}^{\io\th}$.
Assume that $\CE'$ is a constant sheaf, and $\CE$ is a non-constant 
sheaf. 
\begin{enumerate}
\item
Let $K =K_{T,\CE}, K' = K_{T',\CE'}$  be  complexes  on 
$\CX = G^{\io\th} \times V$ with respect to 
$(T,\CE), (T',\CE')$.  Then we have
\begin{equation*}
\BH_c^i(\CX, K \otimes K') = 0 \quad 
   \text{ for all } i.
\end{equation*}
\item
Let $K = K^{\sym}_{T,\CE}, {K'} = {K'}^{\sym}_{T',\CE'}$
be complexes on $G^{\io\th}$ with respect to $(T,\CE), (T',\CE')$. 
Then we have
\begin{equation*}
\BH_c^i(G^{\io\th}, K \otimes K') = 0 \quad 
   \text{ for all } i.
\end{equation*}
\end{enumerate}
\end{lem}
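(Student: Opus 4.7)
The plan is to adapt Lusztig's orthogonality argument for character sheaves [L3, \S 7--8] to our exotic setting. I will treat (i); part (ii) follows by repeating the same argument with $G^{\io\th}$ in place of $\CX$, using Remark 1.8 and [SS, Theorem 1.16] in place of [SS, Theorem 4.2].

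First I would reduce to a hypercohomology computation on a fibre product. Choose $\th$-stable Borel subgroups $B \supset T$ and $B' \supset T'$. By [SS, Theorem 4.2], $K \simeq \pi_*\a^*\CE[\dim\CX]$ and $K' \simeq \pi'_*{\a'}^*\CE'[\dim\CX]$ with $\pi,\a,\pi',\a'$ as in 1.4. Form the fibre product $Z = \wt\CX \times_\CX \wt\CX'$, with projections $\wt\pi : Z \to \wt\CX$ and $\wt\pi' : Z \to \wt\CX'$. Since $\pi,\pi'$ are proper, proper base change and the projection formula give, up to a common shift,
\begin{equation*}
\BH_c^*(\CX, K \otimes K') \simeq \BH_c^*(Z, (\a\wt\pi)^*\CE \otimes (\a'\wt\pi')^*\CE').
\end{equation*}

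Next I would stratify $Z$ and establish vanishing stratum by stratum. The variety $Z$ parametrises tuples $(x, v, gB^{\th}, g'{B'}^{\th})$ in $G^{\io\th} \times V \times H/B^{\th} \times H/{B'}^{\th}$ satisfying the incidence conditions of (1.4.1) for both flags. Projecting to $H/B^{\th} \times H/{B'}^{\th}$ and passing to the diagonal $H$-action yields a finite decomposition $Z = \coprod_w Z_w$ indexed by representatives $w$ of the $H$-orbits on $H/B^{\th} \times H/{B'}^{\th}$. On each $Z_w$ one may take $g' = gn_w$ for a fixed $n_w \in H$, so that both incidence conditions become conditions on the single triple $(g, x, v)$, one being a conjugate of the other by $n_w$. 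Since $\CE'$ is constant, $(\a'\wt\pi')^*\CE'$ is a constant sheaf on $Z_w$, and the problem reduces to $\BH_c^*(Z_w, (\a\wt\pi)^*\CE)$. A careful analysis then exhibits $Z_w$ as a fibre bundle over a suitable base whose fibres carry a free action of a positive-dimensional subtorus $S \subset T^{\io\th}$; along $S$, the map $\a\wt\pi$ is equivariant with a character that makes $\CE|_S$ non-trivial, precisely because $\CE$ is non-constant while $\CE'$ is constant (so no cancellation between the two factors can occur). By the K\"unneth formula and the standard vanishing
\begin{equation*}
\BH_c^*(S, \CL) = 0
\end{equation*}
for any non-trivial tame local system $\CL$ on a torus $S$, each contribution vanishes, which proves (i).

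The main obstacle is the stratum-wise analysis: producing, uniformly in $w$, a positive-dimensional torus factor in the fibres of $Z_w \to (\text{base})$ along which $(\a\wt\pi)^*\CE$ pulls back non-trivially. This is the exotic-symmetric-space analogue of Lusztig's $(B,B')$-Bruhat computation. The two new features to handle are the involution $\th$, which replaces ordinary Weyl-group double cosets by their $\th$-twisted analogues already used in Section 1, and the extra $V$-factor, which only contributes an affine bundle over the torus factor and so cannot destroy it. Once this structural description of $Z_w$ is in place, the K\"unneth--torus vanishing closes the argument.
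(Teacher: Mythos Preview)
Your outline matches the paper's proof: pass to the fibre product $Z = \wt\CX \times_{\CX} \wt\CX'$, stratify by $H$-orbits on $H/B^{\th}\times H/{B'}^{\th}$, and reduce (via Leray) to the fibre over a single point in each orbit. The one place where your formulation should be sharpened is the structure of that fibre. You aim for a ``positive-dimensional subtorus $S\subset T^{\io\th}$'' along which $\CE|_S$ is non-trivial; but a non-constant $\CE$ on $T^{\io\th}$ can restrict trivially to a proper subtorus, so as stated you would still owe, for every stratum, a choice of $S$ on which $\CE$ remains non-trivial. That is exactly the delicate point, and it is not clear your sketch provides it.

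The paper avoids this by showing that the fibre is an affine bundle over the \emph{full} torus $T^{\io\th}$, uniformly in the stratum. Since $T,T'$ are $H$-conjugate one may pick $\w\in H$ with $\w T'\w^{-1}=T$ and take the base point $\xi=(B^{\th},\w{B'}^{\th})$; then the fibre is
\[
Z(\w)=\bigl(B^{\io\th}\cap \w{B'}^{\io\th}\w^{-1}\bigr)\times\bigl(M_n\cap \w(M_n')\bigr),
\]
and the map $t_1*u_1\mapsto \th(t_1^{-1})t_1\cdot(t_1^{-1}u_1t_1)$ gives
\[
T\times^{T^{\th}}(U\cap \w U'\w^{-1})^{\io\th}\;\isom\;B^{\io\th}\cap \w{B'}^{\io\th}\w^{-1},
\]
so $Z(\w)$ is a vector bundle over $T^{\io\th}\simeq T/T^{\th}$ (the $V$-factor contributes only the linear piece $M_n\cap\w(M_n')$, as you anticipated). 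Hence $H_c^*(Z(\w),\CL\boxtimes\CL')\simeq H_c^*(T^{\io\th},\CE\otimes a^*\CE')=H_c^*(T^{\io\th},\CE)$, using that $\CE'$ is constant, and this vanishes because $\CE$ is a non-constant tame local system on $T^{\io\th}$. No subtorus argument is needed. With this refinement your proof becomes the paper's.
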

 
\begin{proof}
We prove the proposition in a similar way as in the proof of 
Proposition 7.2 in [L3].  First consider the case (i).
We may replace $K_{T,\CE}$ by $\wt K_{B, \CE}$, 
and $K_{T',\CE'}$ by $\wt K_{B', \CE'}$, where 
$B$ (resp. $B'$) is a $\th$-stable Borel subgroup of 
$G$ containing $T$ (resp. $T'$).
We consider the fibre product 
$Z = \wt\CX \times_{\CX}\wt\CX'$, 
where $\wt\CX$ is the variety given in 1.4 attached to 
 $B, M_n$, and $\wt\CX'$ is a similar one attached to 
$B',M_n'$. 
Then $Z$ can be written as
\begin{equation*}
\begin{split}
Z = \{ (x,v, &\ gB^{\th}, g'{B'}^{\th}) \in G^{\io\th} \times V
              \times H/B^{\th} \times H/{B'}^{\th}  \\
     &\mid g\iv xg \in B^{\io\th}, {g'}\iv xg' \in {B'}^{\io\th},
            g\iv v \in M_n, {g'}\iv v \in M_n'\}. 
\end{split}
\end{equation*}
Note that $H$ acts on $Z$ so that the projections $Z \to \wt\CX$, 
$Z \to \wt\CX'$ are compatible with $H$-actions on those varieties. 
Let $\CL = \a^*\CE$ be a local system on $\wt\CX$, 
and $\CL'$ a similar local system on $\wt\CX'$.  
Since $K = \pi_*\CL$, and similarly for $K'$, up to shift, by the 
K\"unneth formula, we have 
\begin{equation*}
\tag{3.2.1}
\BH_c^i(\CX, K\otimes K') \simeq  H_c^i(Z, \CL\boxtimes \CL') 
\end{equation*}
up to shift.   Hence in oder to show the proposition, it is 
enough to see that the right hand side of (3.2.1) is equal to
zero for each $i$.
For each $H$-orbit $\CO$ of $H/B^{\th} \times H/{B'}^{\th}$, put
\begin{equation*}
Z_{\CO} = \{ (x,v, gB^{\th}, g'{B'}^{\th}) \in Z 
                \mid (gB^{\th}, g'{B'}^{\th}) \in \CO \}.
\end{equation*}
Then $Z = \coprod_{\CO}Z_{\CO}$ is a finite partition, and 
$Z_{\CO}$ is a locally closed subvariety of $Z$.   Hence 
by a cohomology exact sequence, it is enough to show that
$H_c^i(Z_{\CO}, \CL\boxtimes\CL') = 0$ for any $i$ and any $\CO$.
We define a morphism $f_{\CO} : Z_{\CO} \to \CO$ as the 
third and fourth projection.  Then by the Leray spectral sequence, 
we have
\begin{equation*}
H_c^i(\CO, R^j(f_{\CO})_!(\CL\boxtimes\CL')) \Rightarrow
           H_c^{i+j}(Z_{\CO}, \CL\boxtimes\CL').
\end{equation*}
Thus it is enough to show that 
$R^j(f_{\CO})_!(\CL\boxtimes\CL') = 0$ for any $j$, which
is equivalent to $H^j_c(f_{\CO}\iv(\xi), \CL\boxtimes\CL') = 0$
for any $j$ and any $\xi \in \CO$.  Since $\CL\boxtimes\CL'$ 
is an $H$-equivariant local system on $Z$,  
it is enough to show this for a single element $\xi \in \CO$. 
Hence we may choose $\xi = (B^{\th}, \w{B'}^{\th})$, 
where $\w \in H$ is such that $\w T'\w\iv = T$. (Note that 
$T$ and $T'$ are conjugate under $H$.)  Then 
$f_{\CO}\iv(\xi)$ is isomorphic to $Z(\w)$, where 
\begin{equation*}
Z(\w) = (B^{\io\th} \cap \w{B'}^{\io\th}\w\iv) \times (M_n \cap \w(M_n')).
\end{equation*}
Here we have
\begin{equation*}
B^{\io\th} \cap \w {B'}^{\io\th}\w\iv 
  = \{ su \in B \mid s \in T^{\io\th}, 
           u \in U \cap \w U' \w\iv, \th(u) = su\iv s\iv\}. 
\end{equation*}
Then the map $t_1*u_1 \mapsto \th(t_1\iv)t_1\cdot (t_1\iv u_1t_1)$
gives an isomorphism 
\begin{equation*}
T\times^{T^{\th}}(U \cap \w U'\w\iv)^{\io\th} 
\isom B^{\io\th} \cap \w {B'}^{\io\th}\w\iv.
\end{equation*}
It follows that the projection 
$B^{\io\th} \cap \w{B'}^{\io\th}\w\iv \to T^{\io\th} \simeq T/T^{\th}$ 
gives
rise to a vector bundle over $T^{\io\th}$. 
Hence $Z(\w)$ is also a vector bundle over $T^{\io\th}$, of rank 
say $d$. 
If we denote by 
$a : T^{\io\th} \to {T'}^{\io\th} = \w\iv T^{\io\th}\w$,  
we have
\begin{equation*}
H_c^{i +2d}(Z(\w), \CL\boxtimes \CL') \simeq 
      H_c^i(T^{\io\th}, \CE\otimes a^*\CE') \simeq H_c^i(T^{\io\th},\CE)
\end{equation*}
since $\CE'$ is a constant sheaf. 
Hence we have only to show that $H_c^i(T^{\io\th}, \CE) = 0$ 
for any $i$. But this certainly holds since $\CE$ is 
a non-constant tame local system on $T^{\io\th}$.  Thus (i) 
is proved.  (ii) is proved in a similar way, just ignoring 
the vector space part. 
\end{proof}

\para{3.3.}
For $\th$-stable maximal tori $T, T'$ conjugate under $H$, put
\begin{align*}
N_H(T^{\th}, {T'}^{\th}) &= 
   \{ n \in H \mid n\iv T^{\th}n = {T'}^{\th}\}, \\
N_H(T^{\io\th}, {T'}^{\io\th}) &= 
   \{ n \in H \mid n\iv T^{\io\th}n = {T'}^{\io\th}\}.
\end{align*} 
Since $T, T'$ are $H$-conjugate, 
there exists $h \in H$ such that $h\iv Th = T'$.
It follows that $N_H(T^{\th},{T'}^{\th}) = 
      N_H(T^{\th})h = hN_H({T'}^{\th})$, and 
$N_H(T^{\io\th},{T'}^{\io\th}) = 
      N_H(T^{\io\th})h = hN_H({T'}^{\io\th})$.  
Since one can check that $N_H(T^{\th}) \subset N_H(T^{\io\th})$, 
we have 
\begin{equation*}
\tag{3.3.1}
N_H(T^{\th}, {T'}^{\th}) \subset N_H(T^{\io\th}, {T'}^{\io\th}).
\end{equation*}

The following orthogonality relations are an analogy of 
Theorem 9.2 and Theorem 9.3 in [L3].
(In the following, we use the notation as in Theorem 2.4, 
 $T^{\th,F} = T^{\th} \cap T^F$
and $T^{\io\th, F} = T^{\io\th} \cap T^F$.)   

\begin{thm} [Orthogonality relations for $\x_{T,\CE}$]   
Assume that $T, T'$ are $F$-stable, $\th$-stable maximal tori in $G$
as in 3.1. Let $\CE = \CE_{\vth}, \CE' = \CE_{\vth'}$ be tame local systems 
on $T^{\io\th}, {T'}^{\io\th}$ with 
$\vth \in (T^{\io\th, F})\wg, \vth' \in ({T'}^{\io\th,F})\wg$. 
 Then we have
\begin{equation*}
\tag{3.4.1}
\begin{split}
|H^F|\iv&\sum_{(x,v) \in \CX^F}\x_{T,\CE}(x,v)\x_{T',\CE'}(x,v)  \\
    &= |T^{\th,F}|\iv|T'^{\th,F}|\iv\sum_{\substack{ n \in 
         N_H(T^{\th}, {T'}^{\th})^F \\ t \in T^{\io\th, F} }}
             \vth(t)\vth'(n\iv tn).
\end{split}
\end{equation*}
(Note that $n\iv tn \in {T'}^{\io\th,F}$ by (3.3.1)).
\end{thm}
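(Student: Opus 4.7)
The approach follows Lusztig's argument for [L3, Theorem 9.2], adapted to the exotic symmetric space. First, Grothendieck's trace formula reduces the left hand side of (3.4.1) to an alternating sum of Frobenius traces on the hypercohomology $\BH^*_c(\CX, K_{T,\CE} \otimes K_{T',\CE'})$ with respect to $\vf \otimes \vf'$. By [SS, Theorem 4.2] we may replace $K_{T,\CE}$ and $K_{T',\CE'}$ by $\wt K_{B,\CE} = \pi_*\a^*\CE[\dim \CX]$ and $\wt K_{B',\CE'} = \pi'_*(\a')^*\CE'[\dim \CX]$, where $B$ (resp.\ $B'$) is a $\th$-stable Borel subgroup containing $T$ (resp.\ $T'$). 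The K\"unneth formula applied to the fibre product
$$Z = \wt\CX \times_\CX \wt\CX'$$
yields $\BH^*_c(\CX, \wt K_{B,\CE} \otimes \wt K_{B',\CE'}) \simeq H^*_c(Z, \CL \boxtimes \CL')$ up to shift, with $\CL = \a^*\CE$ and $\CL' = (\a')^*\CE'$.

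Next, partition $Z = \coprod_\CO Z_\CO$ by $H$-orbits $\CO$ in $H/B^{\th} \times H/{B'}^{\th}$; via cohomology long exact sequences, it suffices to compute $H^*_c(Z_\CO, \CL \boxtimes \CL')$ for each orbit. Using the projection $f_\CO : Z_\CO \to \CO$ and the Leray spectral sequence, this further reduces to $H^*_c(f_\CO\iv(\xi), \CL \boxtimes \CL')$ at a single $\xi \in \CO$. As in the proof of Lemma 3.2, choose $\xi = (B^{\th}, \w {B'}^{\th})$ with $\w \in H$ satisfying $\w T' \w\iv = T$ whenever the orbit admits such a representative. The fibre is then $Z(\w) = (B^{\io\th} \cap \w {B'}^{\io\th}\w\iv) \times (M_n \cap \w M_n')$, which is a vector bundle of some rank $d$ over $T^{\io\th}$, so
$$H^{i+2d}_c(Z(\w), \CL \boxtimes \CL') \simeq H^i_c(T^{\io\th}, \CE \otimes a_\w^*\CE'),$$
where $a_\w : T^{\io\th} \to {T'}^{\io\th}$ is conjugation by $\w$. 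A variant of Lemma 3.2 shows that orbits admitting no such $\w$ contribute zero. Only orbits indexed by $\w$ with $\w T'\w\iv = T$ survive, and the additional $\th$-compatibility of the Borel pair restricts $\w$ to $N_H(T^{\th}, T'^{\th})$ by (3.3.1).

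Finally, Grothendieck's trace formula on the torus gives
$$\Tr(F, H^*_c(T^{\io\th}, \CE \otimes a_\w^*\CE')) = \pm \sum_{t \in T^{\io\th,F}} \vth(t)\, \vth'(\w\iv t \w),$$
which vanishes unless the product of the two tame local systems is trivial. Summing over $F$-stable orbits, with the volume factors $|T^{\th,F}|\iv |{T'}^{\th,F}|\iv$ arising from the stabilizer counts in the double coset decomposition $B^{\th}\bs H / {B'}^{\th}$, produces the right hand side of (3.4.1). The main obstacle will be the careful bookkeeping of Frobenius-equivariance through the various shifts and the Leray spectral sequence to recover the precise constants and signs; a secondary subtlety is the passage from the normalizer $N_H(T^{\io\th}, {T'}^{\io\th})$, which arises naturally at the torus level, down to $N_H(T^{\th}, {T'}^{\th})$ appearing in the final sum, which rests on (3.3.1) together with the $\th$-stability requirement on the Borel pair $(B, \w B' \w\iv)$.
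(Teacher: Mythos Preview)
Your approach is genuinely different from the paper's, and as written it has real gaps.

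\textbf{What the paper actually does.} The paper does \emph{not} prove Theorem~3.4 by a direct K\"unneth/stratification computation of $\BH^*_c(\CX, K_{T,\CE}\otimes K_{T',\CE'})$. Instead it proves Theorems~3.4 and~3.5 simultaneously by induction on the semisimple rank (Sections~3.6--3.10): the character formula (Theorem~2.4) expands $\x_{T,\CE}(su,v)$ in terms of Green functions $Q^{Z_G(s)}$, and one shows in~3.8 that orthogonality of Green functions on all $Z_G(s)$ implies~(3.4.1) by a straightforward change of variables. The base case is a hand computation for $GL_2$ (3.7), and the inductive step for Green functions (3.9) is closed using Lemma~3.2 only in the special situation $\vth'=1$, $\vth\ne 1$, where the hypercohomology \emph{vanishes} and both sides of~(3.4.1) are zero.

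\textbf{Where your argument runs into trouble.} First, the Borel subgroups $B,B'$ are $\th$-stable but in general \emph{not} $F$-stable (see~1.2: $B=hB_0h\iv$ with $h\iv F(h)=\dot w$), so $\wt\CX$, the fibre product $Z$, and the stratification by $H$-orbits $\CO\subset H/B^{\th}\times H/{B'}^{\th}$ carry no naive Frobenius action. The paper spends all of Sections~1.2--1.4 constructing a modified Frobenius $F'$ and the isomorphism $\vf$ precisely to get around this; you would have to transport that whole apparatus through the K\"unneth isomorphism and the Leray spectral sequence, and you have not indicated how. Second, every double coset admits a representative $\w$ with $\w T'\w\iv=T$ (since $T,T'$ are $H$-conjugate), so your claimed dichotomy ``orbits with such $\w$ survive, the others contribute zero'' is empty: \emph{all} orbits contribute a term $H^*_c(T^{\io\th},\CE\otimes a_\w^*\CE')$, but with a shift by a rank $d=d(\w)$ that varies with the orbit. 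To land on the clean right hand side $|T^{\th,F}|\iv|{T'}^{\th,F}|\iv\sum_{n,t}\vth(t)\vth'(n\iv tn)$ you would need these $q^{d(\w)}$ weights, the signs from the shifts, and the alternating sums from the filtration long exact sequences to collapse in a very specific way; nothing in your outline explains why they should. Lemma~3.2 sidesteps all of this because it only needs each piece to vanish, not to add up to a prescribed value. Your ``secondary subtlety'' about passing from $N_H(T^{\io\th},{T'}^{\io\th})$ to $N_H(T^{\th},{T'}^{\th})$ is also not resolved by (3.3.1), which gives the inclusion in the wrong direction for your purposes.

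In short, the stratification argument is well-suited to proving vanishing (and that is exactly how the paper uses it), but to get the exact formula the paper takes the character-formula-plus-induction route, which avoids all of the bookkeeping you flag as ``the main obstacle''.
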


\begin{thm}[Orthogonality relations for Green functions]  
\begin{equation*}
\tag{3.5.1}
|H^F|\iv\sum_{(u,v) \in \CX\uni^F} Q_T(u,v)Q_{T'}(u,v) 
     = \frac{|N_H(T^{\th}, {T'}^{\th})^F|}
              {|T^{\th, F}||{T'}^{\th, F}|}. 
\end{equation*}
\end{thm}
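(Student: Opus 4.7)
The plan is to derive Theorem 3.5 from Theorem 3.4 and the character formula (Theorem 2.4) by Fourier inversion over the character groups $(T^{\io\th,F})\wg$ and $({T'}^{\io\th,F})\wg$. The key observation is that summing the identity (3.4.1) over all pairs $(\vth,\vth')$ isolates the unipotent locus of $\CX^F$ and turns $\x_{T,\CE_\vth}$ into a multiple of the Green function $Q_T$.

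First, I would compute
\[
\Sigma_T(x,v) := \sum_{\vth \in (T^{\io\th,F})\wg} \x_{T,\CE_\vth}(x,v)
\]
for $(x,v) \in \CX^F$. Writing $x = su$ with $s$ semisimple and $u$ unipotent commuting with $s$, Theorem 2.4 gives
\[
\Sigma_T(su,v) = |Z_H(s)^F|\iv \sum_{\substack{g \in H^F \\ g\iv sg \in T^{\io\th,F}}} Q^{Z_G(s)}_{gTg\iv}(u,v) \, \sum_{\vth} \vth(g\iv sg).
\]
By character orthogonality on $T^{\io\th,F}$, the inner character sum equals $|T^{\io\th,F}|$ if $g\iv sg = e$ and vanishes otherwise; this forces $s = e$. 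When $s = e$, the condition $g\iv sg \in T^{\io\th,F}$ holds automatically, $Z_G(s) = G$, $Z_H(s) = H$, and $Q_{gTg\iv} = Q_T$ as functions on $\CX\uni^F$ for all $g \in H^F$. This last identity follows from the naturality of the construction in Section 1 under $H$-conjugation (which yields $Q_{gTg\iv}(u,v) = Q_T(g\iv u g, g\iv v)$) combined with the $H^F$-invariance of $Q_T$ noted in 1.5. Hence
\[
\Sigma_T(x,v) = \begin{cases} |T^{\io\th,F}|\, Q_T(u,v) & \text{if $x = u \in (G^{\io\th}\uni)^F$,} \\ 0 & \text{otherwise.} \end{cases}
\]

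Next, I would sum (3.4.1) over all $(\vth,\vth') \in (T^{\io\th,F})\wg \times ({T'}^{\io\th,F})\wg$. The left-hand side collapses to
\[
|H^F|\iv \sum_{(x,v) \in \CX^F} \Sigma_T(x,v)\, \Sigma_{T'}(x,v) = |T^{\io\th,F}|\,|{T'}^{\io\th,F}| \cdot |H^F|\iv \sum_{(u,v) \in \CX\uni^F} Q_T(u,v)\, Q_{T'}(u,v).
\]
On the right-hand side, $\sum_\vth \vth(t) = |T^{\io\th,F}|\,\d_{t,e}$ forces $t = e$, whence $n\iv t n = e$ automatically so $\sum_{\vth'} \vth'(n\iv tn) = |{T'}^{\io\th,F}|$; the sum over $n$ yields $|N_H(T^\th, {T'}^\th)^F|$, giving
\[
\frac{|T^{\io\th,F}|\,|{T'}^{\io\th,F}|\cdot |N_H(T^\th, {T'}^\th)^F|}{|T^{\th,F}|\,|{T'}^{\th,F}|}.
\]
Equating these two expressions and cancelling the common factor $|T^{\io\th,F}|\,|{T'}^{\io\th,F}|$ yields (3.5.1).

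The computation itself is short; the main point that requires care is the identification $Q_{gTg\iv} = Q_T$ on $\CX\uni^F$ for $g \in H^F$, as everything hinges on this to collapse the $s = e$ contribution cleanly. This identity is a direct consequence of the $H$-equivariance of the complexes $K_{T,\CE}$ built in Section 1 together with Proposition 1.6, so no new input beyond Theorems 2.4 and 3.4 is needed.
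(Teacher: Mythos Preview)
Your derivation is mathematically valid as an implication ``Theorem 3.4 $\Rightarrow$ Theorem 3.5'', but in the logical structure of the paper it is circular. The proof of Theorem 3.4 given in 3.8 expands the left-hand side of (3.4.1) via the character formula as a sum over semisimple $s \in (G^{\io\th})^F$ of Green-function inner products on $Z_G(s)$, and then invokes Theorem 3.5 for each $Z_G(s)$ to evaluate these. This includes central $s$ (in particular $s = e$), where $Z_G(s) = G$, so the proof of Theorem 3.4 for $G$ already consumes Theorem 3.5 for $G$ itself. You therefore cannot cite Theorem 3.4 as an established input; the paper states explicitly in 3.6 that the two theorems are proved simultaneously.

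The paper breaks the circularity by an induction on semisimple rank. The base case (products of $GL_2$) is checked by hand in 3.7. At the inductive step, one assumes Theorem 3.5 only for $Z_G(s)$ with $s$ non-central, which suffices (via 3.8) to run the character-formula computation for all non-central $s$; this yields equation (3.9.1), equal on both sides up to the unknown central contribution $A_1\vT$ versus $A_2\vT$. The missing ingredient that pins down $A_1 = A_2$ is the cohomological vanishing of Lemma~3.2~(i), which gives (3.4.1) outright when $\vth' = 1$ and $\vth \ne 1$; choosing such a pair with $\vth|_{Z(G)^{\io\th,F}} = 1$ makes $\vT \ne 0$ and forces the equality. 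Your Fourier-inversion argument has no substitute for this vanishing input, and without it cannot separate the central term from the rest, so as written it does not close the induction.
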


\para{3.6.}
In view of the decomposition in (2.2.1), the definition 
of $\wt\CX, \CX, \x_{T,\CE}, Q_T$, etc.  makes sense if we replace 
$G$ by $Z_G(s)$ for a semisimple element $s \in (G^{\io\th})^F$, 
and $\CX = G^{\io\th} \times V$ by $Z_G(s)^{\io\th} \times V$.
All the results in [SS] and the results in the previous sections 
can be extended to the case of $Z_G(s)$. 
Thus Theorem 3.4 and Theorem 3.5 are formulated for this 
general setting.  In order to make the inductive argument smoothly, 
we shall prove Theorem 3.4 and Theorem 3.5 simultaneously 
under this setting. 
\par
First we note that (3.4.1) holds when $\vth'$ is the trivial character 
and $\vth$ is a non-trivial character.  In fact, by 
the Grothendieck's trace formula for 
the Frobenius map, the left hand side of (3.4.1) coincides with 
\begin{equation*}
\sum_i (-1)^i\Tr(F^*, \BH_c^i(\CX, K_{T,\CE}\otimes K_{T',\CE'})),
\end{equation*}
where $F^*$ is an isomorphism induced from 
$\vf: F^*K_{T,\CE} \isom K_{T,\CE}$ and a similar isomorphism 
for $K_{T',\CE'}$.   Then by Lemma 3.2 (i), we see that the left 
hand side of (3.4.1) is equal to zero.  On the other hand, 
the right hand side of (3.4.1) is equal to zero by the orthogonality 
relations for irreducible characters of $T^{\io\th, F}$.   
Hence the assertion holds.

\para{3.7.}  We shall verify the equality (3.5.1) in the special
case where $n = 1$, namely in the case where $G = GL_2$ and 
$H = SL_2$.  In this case the $H$-orbits in 
$\CX\uni = G^{\io\th}\uni \times V$ are 
parametrized by $\CP_{n,2} = \{ (1;-), (-;1)\}$.
The $H$-orbit corresponding to $(-;1)$ (resp. $(1;-)$ ) 
is represented by $z_0 = (e,0) \in \CX^F\uni$ 
(resp. $z_1 = (e,v) \in \CX^F\uni$ with $v \ne 0$), where 
$e \in G^{\io\th}$ is the unit element in $G$.
Then $\pi_1\iv(z_0) = H/B^{\th}$, 
and $\pi_1\iv(z_1) = \{ B^{\th}_1 \}$, 
where $B_1^{\th}$ is the stabilizer in $H$ of a line determined by
$v \in V$.  Let $T_0$ be the maximal torus of $G$ consisting of 
diagonal matrices.  The set of $F$-stable  maximal tori of $G$ 
conjugate to $T_0$ under $H$ is
parametrized by $W_2 \simeq \BZ/2\BZ = \{\pm 1\}$.  
$T_0$ corresponds to $1 \in W_2$, and let  
$T_1$ be a maximal torus corresponding to $-1 \in W_2$.  
Then $|T_0^{\th, F}| = q-1$, $|T_1^{\th, F}| = q+1$, and  
we have
\begin{equation*}
\begin{cases}
Q_{T_0}(z_0) = (-1)^{\dim \CX}(q + 1), \\
Q_{T_1}(z_0) = (-1)^{\dim \CX}(-q +1), \\
Q_{T_0}(z_1) = (-1)^{\dim\CX}, \\
Q_{T_1}(z_1) = (-1)^{\dim\CX},
\end{cases}
\end{equation*}
where $\dim \CX = 2n^2 + n = 3$.  
If we put $\lp Q_T, Q_{T'}\rp_{\ex} = |H^F|\iv
      \sum_{z \in \CX\uni^F}Q_T(z)Q_{T'}(z)$, 
one can easily compute that
\begin{equation*}
\begin{cases}
\lp Q_{T_0}, Q_{T_1}\rp_{\ex} = 0 , \\
\lp Q_{T_0}, Q_{T_0}\rp_{\ex} = |H^F|\iv (2q^2 + 2q), \\
\lp Q_{T_1}, Q_{T_1}\rp_{\ex} = |H^F|\iv (2q^2 - 2q).
\end{cases}
\end{equation*}
Since $|N_H(T^{\th})^F|/|T^{\th, F}| = 2$ for $T = T_0, T_1$, 
and $|H^F| = q(q^2-1)$, 
the equality (3.5.1) holds in this case.
\par 
Next we assume that $G$ is an  $r$ product of $GL_2$, on which
$F$ acts transitively.  Then the formula (3.5.1) is reduced to 
the case where $H = SL_2$ with $F$ replaced by $F^r$.  Hence 
it holds also in this case.  Thus (3.5.1) holds in the case 
where $G$ is a product of $GL_2$ on which $F$ acts as a permutation
of factors.

\para{3.8.}
In this subsection, we show that Theorem 3.4 holds for $G$ 
under the assumption that Theorem 3.5 holds for the subgroup
$Z_G(s)$ for any semisimple element $s \in (G^{\io\th})^F$.
Let $G^{\io\th}_{\ss}$ be the set of semisimple elements in 
$G^{\io\th}$.
By making use of the character formula (Theorem 2.4), we have
\begin{equation*}
\begin{split}
|H^F|\iv&\sum_{(x,v) \in \CX^F}\x_{T,\CE}(x,v)\x_{T'\CE'}(x,v)  \\
&= |H^F|\iv\sum_{\substack{ s \in (G^{\io\th}_{\ss})^F \\
        x,x' \in H^F \\
   x\iv sx \in T^{\io\th, F} \\
   {x'}\iv sx' \in {T'}^{\io\th, F} }}f(s,x,x')|Z_H(s)^F|^{-2}
          \vth(x\iv sx)\vth'({x'}\iv sx'),
\end{split}
\end{equation*}
where 
\begin{equation*}
f(s,x,x') = \sum_{(u,v) \in (\CX\uni^{Z_G(s)})^F}
       Q_{xTx\iv}^{Z_G(s)}(u,v)Q_{x'T'{x'}\iv}^{Z_G(s)}(u,v),
\end{equation*}
($\CX\uni^{Z_G(s)}$ denotes the object $\CX\uni$ defined for 
$Z_G(s)$ instead of $G$).  
By applying Theorem 3.5 for $Z_G(s)$, we see that 
\begin{equation*}
f(s,x,x') = |Z_H(s)^F||T^{\th, F}|\iv|{T'}^{\th, F}|\iv
    \sharp\{ n \in Z_H(s)^F \mid n\iv xT^{\th}x\iv n = 
                  x' {T'}^{\th}{x'}\iv \}.
\end{equation*}
It follows that the previous sum is equal to
\begin{equation*}
\begin{split}
|H^F|\iv &|T^{\th, F}|\iv |{T'}^{\th, F}|\iv  \\
  &\times \sum_{\substack{ s \in (G^{\io\th}_{\ss})^F \\
      x,x' \in H^F \\
      x\iv sx \in T^{\io\th, F} \\
      {x'}\iv sx' \in {T'}^{\io\th, F} }}
         |Z_H(s)^F|\iv \sum_{\substack{ n \in Z_H(s)^F \\
                   n\iv xT^{\th}x\iv n = x' {T'}^{\th}{x'}\iv }}
                        \vth(x\iv sx)\vth'({x'}\iv sx').
\end{split}
\end{equation*}
Now put $t = x\iv sx \in T^{\io\th,F}$ and $y = x\iv nx'$.
We have $y \in H^F$ with $y\iv T^{\th}y = {T'}^{\th}$, and so
$y\iv T^{\io\th}y = {T'}^{\io\th}$ by (3.3.1).
Then the condition for $x'$ is given by $x' \in (xZ_H(t)y)^F$.
Under this change of variables, the above sum can be rewritten as
\begin{equation*}
|H^F|\iv |T^{\th, F}|\iv |{T'}^{\th, F}|\iv\sum_{\substack{ x \in H^F\\
          t \in T^{\io\th, F}}} |Z_H(t)^F|\iv 
             \sum_{\substack{ y \in N_H(T^{\th}, {T'}^{\th})^F \\
                        x' \in (xZ_H(t)y)^F }}
             \vth(t)\vth'(y\iv ty)
\end{equation*}
which is equal to 
\begin{equation*}
|T^{\th, F}|\iv|{T'}^{\th, F}|\iv\sum_{t \in T^{\io\th, F}}
        \sum_{y \in N_H(T^{\th},{T'}^{\th})^F}
             \vth(t)\vth'(y\iv ty).
\end{equation*}
Thus our assertion holds. 

\para{3.9.} 
We shall show that Theorem 3.5 holds for $G$ under the assumption 
that it holds for $Z_G(s)$ if $s$ is not central. Hence Theorem 3.4 
holds for such groups $Z_G(s)$ by 3.8.  Put
\begin{align*}
A_1 &= |H^F|\iv \sum_{(u,v) \in \CX\uni^F}
           Q_T(u,v)Q_{T'}(u,v), \\
A_2 &= |T^{\th, F}|\iv|{T'}^{\th, F}|\iv |N_H(T^{\th}, {T'}^{\th})^F|. 
\end{align*}
By making use of a part of the arguments in 3.8 (which can be applied
to the case where $s \notin Z(G)^{\io\th, F}$), we see that 

\begin{equation*}
\tag{3.9.1}
\begin{split}
|H^F|\iv&\sum_{(x,v) \in \CX^F}
         \x_{T,\CE}(x,v)\x_{T',\CE'}(x,v) - 
             A_1\vT  \\
&= |T^{\th, F}|\iv|{T'}^{\th, F}|\iv \sum_{t \in T^{\io\th, F}}
            \sum_{y \in N_H(T^{\th}, {T'}^{\th})^F}
  \vth(t)\vth'(y\iv ty) - A_2\vT
\end{split}
\end{equation*}
with $\vT = \sum_{s \in Z(G)^{\io\th,F}}\vth(s)\vth'(s)$.
This formula holds for any $\vth \in (T^{\io\th, F})\wg$, 
$\vth' \in ({T'}^{\io\th, F})\wg$.  In the case where 
$G$ is a product of $GL_2$, Theorem 3.5 is verified in 3.7.  
  So we may assume that there exists a factor of $G$ of the form
$GL_{2n}$ with $n \ge 2$. 
Then one can find a linear character $\vth$ of $T^{\io\th, F}$
such that $\vth|_{Z(G)^{\io\th, F}} = \id$ and that 
$\vth \ne \id$.  We choose $\vth'$ the identity character of 
${T'}^{\io\th, F}$.  Then by 3.6, the first term of the left hand 
side of (3.9.1) coincides with the first term of the right hand side.
Since $\vT = |Z(G)^{\io\th,F}| \ne 0$ in this case, 
we obtain $A_1 = A_2$ as asserted.

\para{3.10.}
We are now ready to prove Theorem 3.4 and Theorem 3.5.  
First note that Theorem 3.5 holds for $G$ in the case 
where $G$ is a product of $GL_2$ by 3.7.  Thus Theorem 3.4
holds for such $G$ by 3.9.  Next we consider the general $G$.  
By induction on the semisimple rank of $G$,  
we may assume that Theorem 3.4 and 3.5 hold for $Z_G(s)$ with 
$s$ is not central. Hence Theorem 3.5 holds for $G$ by 3.9,
and so Theorem 3.4 holds for $G$ by 3.8.    
This completes the proof of Theorem 3.4 and Theorem 3.5.
\par\medskip
By applying a similar argument as in the proof of Theorem 3.4 
and Theorem 3.5, one can prove the orthogonality 
relations in the case of symmetric spaces.

\begin{thm}[Orthogonality relations for $\x^{\sym}_{T,\CE}$]
Let $\CE = \CE_{\vth}, \CE' = \CE_{\vth'}$ with 
$\vth \in (T^{\io\th, F})\wg, \vth' \in ({T'}^{\io\th, F})\wg$.
Then we have
\begin{equation*}
\begin{split}
|H^F|\iv&\sum_{x \in G^{\io\th, F}}
        \x^{\sym}_{T,\CE}(x)\x^{\sym}_{T',\CE'}(x) \\
  &= |T^{\io\th, F}|\iv|{T'}^{\io\th, F}|\iv q^{-2r} 
            \sum_{\substack{n \in N_H(T^{\io\th}, {T'}^{\io\th})^F \\ 
          t \in T^{\io\th, F}}}\vth(t)\vth'(n\iv tn),
\end{split}
\end{equation*}
where $r$ is the rank of $H$. 
\end{thm}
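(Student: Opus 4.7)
The plan is to mirror the strategy of 3.6--3.10 used to prove Theorems 3.4 and 3.5, now relying on Lemma 3.2(ii), the character formula Corollary 2.7, and Remark 1.8. As before, I would prove Theorem 3.11 in tandem with the symmetric analog of Theorem 3.5, namely a Green function orthogonality
\[
|H^F|\iv\sum_{u \in (G^{\io\th}\uni)^F} Q^{\sym}_T(u)\,Q^{\sym}_{T'}(u) \;=\; q^{-2r}\,\frac{|N_H(T^{\io\th},{T'}^{\io\th})^F|}{|T^{\io\th,F}|\cdot|{T'}^{\io\th,F}|},
\]
the two statements being proved simultaneously by induction on the semisimple rank of $G$.

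The analog of 3.6 is immediate: when $\vth$ is nontrivial and $\vth'$ is trivial, Grothendieck's trace formula rewrites the left-hand side of Theorem 3.11 as an alternating sum of traces on $\BH_c^i(G^{\io\th}, K^{\sym}_{T,\CE}\otimes K^{\sym}_{T',\CE'})$, which vanishes by Lemma 3.2(ii); the right-hand side vanishes by the character orthogonality for $T^{\io\th,F}$. For the base case I would verify the symmetric Green function orthogonality for $G=GL_2$ by a direct calculation analogous to 3.7 and then extend to a product of $GL_2$'s permuted by $F$ via restriction of scalars. Then, using Corollary 2.7 (in place of Theorem 2.4) together with the inductive symmetric Green function orthogonality for all proper centralizers $Z_G(s)$, the argument of 3.8 carries over verbatim to give Theorem 3.11 for $G$: after the $(t,y)$-substitution one has $y \in N_H(T^{\io\th},{T'}^{\io\th})^F$, because that is the normalizer appearing in the inductive data (the condition $n\iv(xT^{\io\th}x\iv)n = x'{T'}^{\io\th}{x'}\iv$ replacing the $T^{\th}$-version of 3.8). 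Finally, the analog of 3.9 extracts the symmetric Green function orthogonality for $G$ itself by specializing Theorem 3.11 to a nontrivial $\vth$ that kills $Z(G)^{\io\th,F}$, against $\vth'=\id$, closing the induction exactly as in 3.10.

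The main obstacle is tracking the factor $q^{-2r}$ and justifying the replacement of $N_H(T^{\th},{T'}^{\th})$ (in Theorem 3.4) by $N_H(T^{\io\th},{T'}^{\io\th})$ (in Theorem 3.11); neither phenomenon is present in 3.4--3.5, so both must be injected at the base case and then propagated through the induction. Since $Z_G(s)\simeq \prod G_i$ with $G_i\simeq GL(V_i)$ and $2n_i=2r_i$ so that $r=\sum r_i$ in the notation of 2.2, the factors $q^{-2r_s}$ supplied inductively on each $Z_G(s)$ will combine with the $|Z_H(s)^F|$-normalizations of Corollary 2.7 to produce $q^{-2r}$ overall, but the bookkeeping here, and the parallel replacement of the $T^{\th}$-normalizer by its $T^{\io\th}$-analog throughout, is where care is needed.
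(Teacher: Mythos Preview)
Your proposal is correct and matches the paper's approach exactly: prove Theorems 3.11 and 3.12 simultaneously by the inductive scheme of 3.6--3.10, using Lemma 3.2(ii) and Corollary 2.7 in place of Lemma 3.2(i) and Theorem 2.4, with the $GL_2$ base case for the Green-function orthogonality verified by direct computation. Your concern about tracking $q^{-2r}$ through the induction is less serious than you fear, since $Z_H(s)\simeq\prod_i Sp_{2n_i}$ with $\sum_i n_i=n$ forces the rank of $Z_H(s)$ to equal $r$ for every semisimple $s$, so this factor is a global constant that simply rides along.
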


\begin{thm}[Orthogonality relations for Green functions $Q^{\sym}_T$]
\begin{equation*}
\tag{3.12.1}
|H^F|\iv\sum_{u \in G^{\io\th}\uni}
        Q_T^{\sym}(u)Q_{T'}^{\sym}(u) = 
          q^{-2r} \frac{|N_H(T^{\io\th}, {T'}^{\io\th})^F|}
                  {|T^{\io\th, F}||{T'}^{\io\th, F}|}
\end{equation*}
\end{thm}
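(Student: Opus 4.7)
The plan is to mirror the proof of Theorems 3.4 and 3.5 step by step, using Corollary 2.7 in place of Theorem 2.4 and Lemma 3.2(ii) in place of Lemma 3.2(i). As in Sections 3.6--3.10, I would prove Theorem 3.11 and Theorem 3.12 simultaneously by induction on the semisimple rank of $G$, extending the setup to arbitrary centralizers $Z_G(s)$ for semisimple $s \in (G^{\io\th})^F$.

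First I would repeat the argument of 3.6 in the symmetric-space setting: by the Grothendieck trace formula,
\begin{equation*}
|H^F|\iv\sum_{x \in (G^{\io\th})^F}\x^{\sym}_{T,\CE}(x)\x^{\sym}_{T',\CE'}(x)
 = \sum_i (-1)^i\Tr(F^*,\BH_c^i(G^{\io\th}, K^{\sym}_{T,\CE}\otimes K^{\sym}_{T',\CE'})),
\end{equation*}
so Lemma 3.2(ii) forces this sum to vanish whenever one of $\CE,\CE'$ is a non-constant local system and the other is trivial; the right-hand side of Theorem 3.11 also vanishes in this case by orthogonality of characters of $T^{\io\th,F}$. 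Next I would verify the base case (analog of 3.7) directly for $G=GL_2$, $H=SL_2$, listing the $H^F$-orbits on $(G^{\io\th}\uni)^F$, computing $Q^{\sym}_{T_0}, Q^{\sym}_{T_1}$ on each, and checking that both sides of (3.12.1) agree (in particular verifying the prefactor $q^{-2r}$ in this $r=1$ situation). The product case follows by base change to $F^r$, as in 3.7.

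The inductive step then follows exactly the bootstrap of 3.8--3.10. Assuming Theorem 3.12 for $Z_G(s)$ with $s$ semisimple, I would insert Corollary 2.7 into the double sum defining $\langle \x^{\sym}_{T,\CE},\x^{\sym}_{T',\CE'}\rangle$, break each element by Jordan decomposition $x=su$, and apply Theorem 3.12 for $Z_G(s)$ to the inner unipotent sum
\begin{equation*}
\sum_{u\in (Z_G(s)^{\io\th}\uni)^F} Q^{Z_G(s),\sym}_{xTx\iv}(u) Q^{Z_G(s),\sym}_{x'T'{x'}\iv}(u).
\end{equation*}
After the change of variables $t=x\iv s x$, $y=x\iv n x'$, as in 3.8, and using $|\CO_{\g}^F|=|Z_H(s)^F|$, one obtains the right-hand side of Theorem 3.11, with the factor $q^{-2r}$ being carried through unchanged by the centralizer reduction. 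Then for the analog of 3.9, I would choose $\vth' = \id$ and $\vth$ a non-trivial character of $T^{\io\th,F}$ with $\vth|_{Z(G)^{\io\th,F}} = \id$ (which exists once some simple factor of $G$ has rank $\ge 2$); comparing the two sides as in (3.9.1), the first terms on each side cancel by the already-established vanishing, and the remaining equation yields Theorem 3.12 for $G$.

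The main obstacle I expect is the appearance of the factor $q^{-2r}$, which has no analog in Theorem 3.5; this must reflect the dimension of $V$ ($\dim V=2n=2r$) dropping out, and it has to emerge consistently both in the base case $n=1$ (where it equals $q^{-2}$) and through the reduction $\delta=\dim G^{\io\th}-\dim Z_G(s)^{\io\th}$ in the character-formula step. Careful dimension bookkeeping in the trace-formula computation and in the local system of (1.6.10)-type used to identify $\F_I|_{\CZ'\uni}$ will be needed to match the $q^{-2r}$ factor precisely; once the normalization is pinned down in the $GL_2$ base case, the inductive bootstrap carries it through the centralizer reduction verbatim.
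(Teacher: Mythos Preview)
Your proposal is correct and follows essentially the same approach as the paper: simultaneous induction on Theorems 3.11 and 3.12 over centralizers $Z_G(s)$, using Corollary 2.7 and Lemma 3.2(ii) in place of Theorem 2.4 and Lemma 3.2(i), with the $GL_2$ base case verified directly and the bootstrap of 3.6--3.10 carried over verbatim. One simplification you may have missed: in the $GL_2$ base case $G^{\io\th}\uni = \{e\}$ and $\CW \simeq S_1$ is trivial, so there is only a single unipotent element and a single torus $T_0$ to check (no $T_1$ is needed), and the factor $q^{-2r}=q^{-2}$ falls out immediately from $|N_H(T_0^{\io\th})^F|=|H^F|$; the rank $r=n$ is preserved under passage to $Z_H(s)$, so the factor carries through the induction without any further dimension bookkeeping.
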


\begin{proof}
As in the proof of Theorem 3.4 and Theorem 3.5, we prove 
these two theorems simultaneously for the groups of the form  
$Z_G(s)$.  
First we consider the case where $G = GL_2$.  In this case, 
$G^{\io\th}\uni = \{ e\}$, and $T_0^{\io\th}$ coincides with the center 
of $G$.  We have $\CW = \{ 1\}$.  Let $u = e$.  Then 
$\pi_1\iv(u) = \CB^{\th}$, where 
$\pi_1 : \wt G^{\io\th}\uni \to G^{\io\th}\uni$ is as in 
[SS, 1.10].
Thus $Q^{\sym}_{T_0}(u) = (-1)^{\dim \wt G^{\io\th}}(q+1)$.
We see that the left hand side of (3.12.1) for $T = T_0$ is equal to 
\begin{equation*}
|H^F|\iv(q+1)^2 = q\iv(q+1)(q-1)\iv.
\end{equation*}

On the other hand, 
\begin{equation*}
N_H(T_0^{\io\th})^F/|T_0^{\io\th F}|^2 = |H^F|/(q-1)^2 = q(q+1)(q-1)\iv.
\end{equation*}
Since $r = 1$, we obtain  the equality in (3.12.1). 
By a similar argument as in 3.7, 
this implies that Theorem 3.12 holds for $G$, where $G$ is 
a product of $GL_2$.  Then the arguments in 3.6 $\sim$ 3.10 are 
applied to our situation, by using Lemma 3.2 (ii) instead of (i), 
and by the character formula (Corollary 2.7). 
This proves the theorems.  
\end{proof}

\par\bigskip
\section{A purity result}  

\para{4.1.}
For an $\th$-stable Borel subgroup of $G$, we denote the map
$\pi_1: \wt\CX\uni \to \CX\uni$ given in 1.4 as 
$\pi_{1,B} : \wt\CX_{\unip,B} \to \CX\uni$ to indicate the dependence
on $B$.
Let $K_1 = (\pi_{1,B_0})_*\Ql[\dim \CX\uni]$.
By the Springer correspondence ([SS, Theorem 5.4]), we have 
\begin{equation*}
K_1 \simeq \bigoplus_{\Bmu \in \CP_{n,2}} \r_{\Bmu}
              \otimes A_{\Bmu},
\end{equation*}  
where $A_{\Bmu} = \IC(\ol\CO_{\Bmu}, \Ql)[\dim \CO_{\Bmu}]$, 
and $\r_{\Bmu} \simeq \Hom (A_{\Bmu}, K_1)$ 
is the irreducible $W_n$-module corresponding 
to $\Bmu$ by [SS, Theorem 7.1].  
Since $\wt\CX_{\unip,B_0}$ has a natural $\Fq$-structure, we have 
an isomorphism $\vf_1 : F^*K_1 \isom K_1$.
Since each $H$-orbit $\CO_{\Bmu}$ is $F$-stable, 
we have $F^*A_{\Bmu} \simeq A_{\Bmu}$.  Hence
$\vf_1$ induces 
an isomorphism $F^*(\r_{\Bmu} \otimes A_{\Bmu}) \isom 
(\r_{\Bmu}\otimes A_{\Bmu})$.  It follows that there exists a unique 
isomorphism $\vf_{\Bmu}: F^*A_{\Bmu} \isom A_{\Bmu}$ such that 
$\vf_1 = \sum_{\Bmu}\s_{\Bmu}\otimes \vf_{\Bmu}$, where 
$\s_{\Bmu}$ is the identity map on $\r_{\Bmu}$.  
Let $\f_{\Bmu}: F^*A_{\Bmu} \isom A_{\Bmu}$ be the natural
isomorphism induced form the $\Fq$-structure of $\CO_{\Bmu}$.
Since $A_{\Bmu}$ is a simple perverse sheaf, 
$\vf_{\Bmu}$ coincides with $\f_{\Bmu}$ up to scalar.
Let $d_{\Bmu} = (\dim \CX\uni - \dim \CO_{\Bmu})/2$.
We note that 
\par\medskip\noindent
(4.1.1) \  $\vf_{\Bmu} = q^{d_{\Bmu}}\f_{\Bmu}$.  In particular, 
the map $\vf_{\Bmu}$ gives a scalar multiplication $q^{d_{\Bmu}}$
on $A_{\Bmu}|_{\CO_{\Bmu}}$. 
\par\medskip 
In fact, for $z \in \CX\uni$, we have
$\CH^i_zK_1 \simeq H^{i + \dim \CX\uni}(\CB_z, \Ql)$, 
where $\CB_z$ is the closed subvariety of $\CB = H/B^{\th}$
isomorphic to $\pi_{1,B}\iv(z)$.
For $z \in \CO_{\Bmu}$, we have ([SS, (5.4.3)]) 
\begin{equation*}
 H^{2d_{\Bmu}}(\CB_z, \Ql) \simeq \r_{\Bmu} \otimes 
       \CH^0_z \IC(\ol\CO_{\Bmu},\Ql) \simeq \r_{\Bmu}. 
\end{equation*} 
Here $d_{\Bmu} = \dim \CB_z$, and $H^{2d_{\Bmu}}(\CB_z, \Ql)$
is an irreducible $W_n$-module.  Since the Frobenius action on 
$H^{2d_{\Bmu}}(\CB_z, \Ql)$ commutes with the $W_n$ action, we see that 
the Frobenius map acts on $H^{2d_{\Bmu}}(\CB_z, \Ql)$ 
as a scalar multiplication.  In particular, all the irreducible 
components in $\CB_z$ are $F$-stable, and this scalar is given by 
$q^{d_{\Bmu}}$.  
It follows that $\vf_{\Bmu}$ acts as a scalar multiplication 
$q^{d_{\Bmu}}$ on $\CH^0_z \IC(\ol\CO_{\Bmu},\Ql) \simeq \Ql$.
Since $\f_{\Bmu}$ gives the identity map on this space, we obtain 
(4.1.1).
\par
We choose a $\th$-stable Borel subgroup $B$ containing $T = T_w$
for $w \in W_n$, and consider the complex 
$K_w = (\pi_{1,B})_*\Ql[\dim \CX\uni]$.
By 1.3, we have a canonical isomorphism $\vf_w: F^*K_w \isom K_w$.
Note that, by definition, we have 
$Q_{T_w} = (-1)^{\dim \CX - \dim \CX\uni}\x_{K_w, \vf_w}$.
In view of (1.7.3), we have 
\begin{equation*}
\tag{4.1.2}
Q_{T_w} = (-1)^{\dim \CX - \dim \CX\uni}
   \sum_{\Bmu \in \CP_{n,2}}\x^{\Bmu}(w)\x_{A_{\Bmu},\vf_{\Bmu}},
\end{equation*}
where $\x^{\Bmu}$ is the irreducible character of $W_n$
corresponding to $\r_{\Bmu}$.
(Note that $\x^{\Bmu}(w) = \x^{\Bmu}(w\iv)$, and that 
the effect of $F^*$ on $\th_w$ is ignored since 
$F$ acts trivially on $W_n$.)

\para{4.2.}
We shall consider a similar setting for $G^{\io\th}\uni$.
Let $K_1 = (\pi_{1, B_0})_*\Ql[\dim G^{\io\th}\uni]$ with respect 
to $\pi_1: \wt G^{\io\th}\uni \to G^{\io\th}\uni$. 
By [SS, Remark 7.5], we have 
\begin{equation*}
K_1 \simeq H^{\bullet}(\BP_1^n)\otimes  
         \bigoplus_{\mu \in \CP_n}\r_{\mu}\otimes A_{\mu},
\end{equation*}
where $A_{\mu} = \IC(\ol \CO_{\mu},\Ql)[\dim \CO_{\mu}]$ with respect 
to the $H$-orbit $\CO_{\mu}$ in $G^{\io\th}\uni$, and 
$\r_{\mu}$ is
the irreducible $S_n$-module such that 
$H^{\bullet}(\BP_1^n)\otimes \r_{\mu} \simeq \Hom(A_{\mu}, K_1)$. 
The natural $\Fq$-structure on $\wt G^{\io\th}\uni$ induces 
an isomorphism $\vf_1 : F^*K_1 \isom K_1$, and it determines an isomorphism 
$\vf_{\mu} : F^*A_{\mu} \isom A_{\mu}$ for each $\mu$ such that 
$\vf_1 = (q+1)^n\sum_{\mu \in \CP_n}\s_{\mu} \otimes \vf_{\mu}$  
with $\s_{\mu}$
the identity map on $\r_{\mu}$.  As in (4.1.1) we see that 
\par\medskip\noindent
(4.2.1) \ $\vf_{\mu} = q^{d_{\mu}}\f_{\mu}$, where 
$d_{\mu} = (\dim G^{\io\th}\uni - \dim \CO_{\mu})/2$, and 
$\f_{\mu}$ is the isomorphism $F^*A_{\mu} \isom A_{\mu}$ 
induced from the natural $\Fq$-structure on $\CO_{\mu}$.  In particular, 
the map $\vf_{\mu}$ gives a scalar multiplication $q^{d_{\mu}}$ 
on $A_{\mu}|_{\CO_{\mu}}$.
\par\medskip
We consider the complex $K_w = (\pi_{1, B})_*\Ql[\dim G^{\io\th}\uni]$
on $G^{\io\th}\uni$ 
with respect to a $\th$-stable Borel subgroup $B$ containing $T = T_w$
for $w \in S_n$.  By Remark 1.8, one can define an isomorphism 
$\vf_w: F^*K_w \isom K_w$.  By definition, 
$Q_{T_w}^{\sym} = 
(-1)^{\dim G^{\io\th} - \dim G^{\io\th}\uni}\x_{K_w, \vf_w}$.  
A similar discussion as in 1.7 can be applied also for this case.  Then 
the map $\vf_w$ is described by the action of $S_n$ on each 
factor $\r_{\mu}$ and by the map $\vf_{\mu}$.  But contrast to the 
case for $\CX\uni$, the Frobenius action on $K_w$ 
permutes the factors on $\BP_1^n$ through the permutation $w$, and 
induces an action $F^*$ on $H^{\bullet}(\BP_1^n)$.  
Here we define a polynomial $\Psi_{\nu}(t)$ for each partition 
$\nu = (\nu_1, \dots, \nu_k)$ of $n$ by 
$\Psi_{\nu}(t) = \prod_{i=1}^k(t^{\nu_i} + 1)$, and define, for 
each $w \in S_n$,  $\Psi_w(t) = \Psi_{\nu}(t)$ 
if $w$ is of type $\nu$.  Then one sees that 
$\x_{F^*, H^{\bullet}(\BP_1^n)} = \Psi_w(q)$.
Thus as in (4.1.2), we have 
\begin{equation*}
\tag{4.2.2}
Q_{T_w}^{\sym} = (-1)^{\dim G^{\io\th} - \dim G^{\io\th}\uni}
      \sum_{\mu \in \CP_n}\Psi_w(q)\x^{\mu}(w)\x_{A_{\mu}, \vf_{\mu}},
\end{equation*}
where $\x^{\mu}$ is the irreducible character of $S_n$ corresponding 
to $\r_{\mu}$. 
\para{4.3.}
We now consider the Frobenius action on $A_{\Bmu}$ more precisely.
In general, let $K$ be a complex on a variety $X$ defined over 
$\Fq$ such that $F^*K \simeq K$.  An isomorphism $\f : F^*K \isom K$
is said to be pointwise pure if the eigenvalues of $\f$ on 
$\CH_x^iK$ are algebraic integers all of whose complex conjugate
have absolute value $q^{i/2}$ for any $x \in X^F$ and for any $i$.
Returning to our setting, we have the following result.
\begin{prop}  
Let $A = \IC(\ol\CO, \Ql)$ and $\f : F^*A \isom A$ be the 
isomorphism induced from the Frobenius map $F$ on $\CO$, where
$\CO$ is an $H$-orbit in $\CX\uni$. 
Then $\f$ is pointwise pure. 
\end{prop}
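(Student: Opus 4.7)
The plan is to realise the shift $A[\dim\CO]$ as a direct summand of the complex $K_1 = (\pi_{1,B_0})_*\Ql[\dim\CX\uni]$ introduced in 4.1, apply Deligne's purity theorem to $K_1$, and then convert the resulting purity statement into pointwise purity of $\f$ by means of the normalisation identity (4.1.1).

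First I would note that $\wt\CX\uni$ as in (1.4.2) is smooth: the projection $(x,v,gB_0^{\th})\mapsto gB_0^{\th}$ realises it as a fibre bundle over the smooth projective flag variety $H/B_0^{\th}$, whose typical fibre is isomorphic to the product of the unipotent part of $B_0^{\io\th}$ with $M_{n,0}$, an affine space. Since $\pi_{1,B_0}$ is proper, Deligne's purity theorem applies: the natural isomorphism $\vf_1\colon F^*K_1\isom K_1$ has the property that its eigenvalues on $\CH^i_xK_1\simeq H^{i+\dim\CX\uni}(\pi_{1,B_0}\iv(x),\Ql)$ are algebraic integers whose complex conjugates all have absolute value $q^{(i+\dim\CX\uni)/2}$.

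Under the decomposition $K_1\simeq\bigoplus_\Bmu \r_\Bmu\otimes A_\Bmu$ with $\vf_1=\sum_\Bmu \s_\Bmu\otimes\vf_\Bmu$ established in 4.1, each $(A_\Bmu,\vf_\Bmu)$ is a direct summand of $(K_1,\vf_1)$ and so inherits this purity property: the eigenvalues of $\vf_\Bmu$ on $\CH^i_xA_\Bmu$ have conjugates of absolute value $q^{(i+\dim\CX\uni)/2}$. Invoking the identity (4.1.1), $\vf_\Bmu=q^{d_\Bmu}\f_\Bmu$ with $d_\Bmu=(\dim\CX\uni-\dim\CO_\Bmu)/2$, dividing by the scalar $q^{d_\Bmu}$ yields that the eigenvalues of $\f_\Bmu$ on $\CH^i_xA_\Bmu$ have conjugates of absolute value $q^{(i+\dim\CO_\Bmu)/2}$. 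Rewriting via $A_\Bmu=A[\dim\CO_\Bmu]$ and $\CH^i_xA_\Bmu=\CH^{i+\dim\CO_\Bmu}_xA$, this is exactly the statement that the eigenvalues of $\f$ on $\CH^j_xA$ have conjugates of absolute value $q^{j/2}$, that is, pointwise purity of $\f$.

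The only real obstacle is careful weight bookkeeping: the shift $[\dim\CX\uni]$ defining $K_1$, the rescaling factor $q^{d_\Bmu}$ from (4.1.1), and the shift $[\dim\CO_\Bmu]$ relating $A_\Bmu$ to $A$, must combine exactly to produce the desired weight $q^{j/2}$ in cohomological degree $j$ on $A$. Once that accounting is tracked correctly, the proof reduces to a direct application of Deligne's purity theorem together with the Springer decomposition from 4.1, with no further geometric input beyond smoothness of $\wt\CX\uni$.
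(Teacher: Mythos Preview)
Your argument has a genuine gap at the step where you invoke ``Deligne's purity theorem''. What Deligne actually proves (Weil~II) is an \emph{upper bound}: for $\pi_{1,B_0}$ proper, the eigenvalues of Frobenius on $H^{j}(\pi_{1,B_0}^{-1}(x),\Ql)$ have absolute value at most $q^{j/2}$. Equivalently, $K_1$ is mixed of weight $\le \dim\CX\uni$ (and, via the decomposition theorem, pure of that weight in the BBD sense). But purity of a perverse sheaf in the BBD sense does \emph{not} imply that its stalks are pointwise pure: the defining condition is only the inequality $\mathrm{wt}\,\CH^i_x K \le i + w$, together with the dual inequality on $DK$, and these do not combine to pin down the weights on individual stalks. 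A singular projective fibre can perfectly well have mixed cohomology (think of a nodal curve, whose $H^1$ carries a weight~$0$ piece). So the sentence ``its eigenvalues \dots\ all have absolute value $q^{(i+\dim\CX\uni)/2}$'' is exactly the statement you are trying to prove, not a consequence of Deligne.

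This is why the paper takes a different route. It constructs, for each $(x,v)\in\CO_{\Bmu}$, an explicit transversal slice $S=(x,v)+U^{-\th}\oplus D$ inside $\Fg^{-\th}\oplus V$, together with a $\Bk^*$-action $\xi$ that fixes $(x,v)$ and contracts $S$ to that point with strictly positive weights. With such a contracting slice one can identify the stalk $\CH^i_{(x,v)}A$ with hypercohomology $\BH^i(S\cap\ol\CO_{\Bla},A|_S)$, and hypercohomology of a pure complex over a point is genuinely pure; this is Lusztig's argument in [L3,~V,~Proposition~24.6]. The geometric construction of the slice and the one-parameter subgroup (adapting Achar--Henderson's enhanced-cone slice to the $\th$-antiinvariant setting) is the essential content that your proposal is missing. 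Once Proposition~4.4 is in hand, the weight bookkeeping you outline with (4.1.1) is indeed how one passes between $\vf_{\Bmu}$ and $\f_{\Bmu}$ --- but that step comes after purity, not before.
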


\begin{proof}
Since $G^{\io\th}\uni \times V$ is isomorphic to 
$\Fg^{-\th}\nil \times V$, compatible with $H$-action 
(see [SS, 1.7]), the $H$-orbit $\CO$ is isomorphic to the corresponding 
$H$-orbit in $\Fg^{-\th}\nil \times V$.  So we may pass to the 
Lie algebra case, and consider an $H$-orbit $\CO$ in 
$\Fg^{-\th}\nil \times V$.   
In [AH, Corrigendum, Proposition 3], Achar-Henderson constructed 
a transversal slice for a $G$-orbit of $\Fg\nil \times V$ with 
a suitable one parameter subgroup action.  Modifying their argument, 
we will construct a transversal slice for $\CO$.  
Recall a closed subgroup $A \simeq GL_n$ 
of $G$ given in [SS, 1.4].  Put $\Fa = \Lie A$ and 
$\Fa' = \th(\Fa)$. 
Thus $\Fa = \Fg\Fl(M_n)$, $\Fa'= \Fg\Fl(M_n')$ with 
$V = M_n \oplus M_n'$.
Assume that $\CO = \CO_{\Bmu}$ 
with $\Bmu = (\mu^{(1)}, \mu^{(2)})$.
Take $(x,v) \in \CO_{\Bmu}$.    
We may assume that 
$x = y -\th(y)$ with $y \in \Fa$
and $v \in M_n$.   
Put $\nu =  \mu^{(1)} + \mu^{(2)}$ with $\nu = 
(\nu_1, \nu_2, \dots, \nu_{m})$.
Then $(y,v) \in \Fa \times M_n$ is of type $\Bmu$.
We fix a normal basis 
$\{ v_{i,j} \mid 1 \le i \le m, 1 \le j \le \nu_i\}$ of 
$M_n$ with respect to $(y,v)$, 
and a basis $\{ v'_{i,j}\}$ of $M_n'$ as in [SS, 7.2].
(Here we may choose $v = \sum_{i=1}^mv_{i,\mu_i^{(1)}}$ as in 
the original setting in [AH].)
Following [AH], we define a subspace $U$ of $\Fg$ with basis 
\begin{equation*}
\begin{split}
\{ z_{i_1,i_2,s}, &z_{i^*_1, i_2,s}, z_{i_1, i_2^*,s}, 
     z_{i_1^*,i_2^*,s}  \\
   &\mid 1 \le i_1, i_2 \le m, 
    \max\{ 0, \nu_{i_1} - \nu_{i_2}\} \le s \le \nu_{i_1} - 1 \}.
\end{split}
\end{equation*}
Here 
$z_{i_1,i_2,s}$ (resp. $z_{i^*_1,i_2,s}, z_{i_1,i_2^*, s}, 
z_{i_1^*,i_2^*,s}$) is the map defined by sending the basis 
$v_{i_2, 1}$ to $v_{i_1, s+1}$ (resp. $v_{i_2,1}$ to 
$v'_{i_1,\nu_{i_1}-s}$, 
$v'_{i_2,\nu_{i_2}}$ to $v_{i_1,s+1}$, 
$v'_{i_2,\nu_{i_2}}$ to $v'_{i_1,\nu_{i_1}-s}$), 
and sending all other basis to zero.  
Then $U$ is a $\th$-stable subspace since 
$\th(z_{i_1,i_2,s}) = \pm z_{i^*_1,i^*_2,s}$, 
$\th(z_{i^*_1,i_2,s}) = \pm z_{i_1,i_2^*,s}$. 
Achar-Henderson proved that 
\begin{equation*}
\tag{4.4.1}
[\Fg, x] \oplus U = \Fg.
\end{equation*}
Since $U$ is $\th$-stable, we have $U = U^{\th} \oplus U^{-\th}$.
Since $x \in \Fg^{-\th}$, we have $[\Fg^{\th},x] \subset \Fg^{-\th}$, 
$[\Fg^{-\th},x] \subset \Fg^{\th}$, and so 
$[\Fg,x] = [\Fg^{\th},x] \oplus [\Fg^{-\th},x]$  
Thus by taking the $-\th$-part in 
(4.4.1), we have 
\begin{equation*}
\tag{4.4.2}
[\Fg^{\th},x] \oplus U^{-\th} = \Fg^{-\th}.
\end{equation*}
Let $D$ be the subspace of $V$ spanned by 
\begin{equation*}
\{ v_{i,j}, v'_{i,j'} 
   \mid 1 \le i \le m, \mu^{(1)}_i + 1 \le j \le \nu_i, 
           1 \le j' \le \mu^{(2)}_i\}.
\end{equation*}
Then $D$ is complementary to $E^x_Gv$ (see [SS, 2.1] for the notation). 
 The last space 
coincides with $E^x_Hv$ by [SS, Lemma 2.2], where $E^x_H = \Lie Z_H(x)$. 
Then one can check that $U^{-\th}\oplus D$ is complementary to 
the space $\{ ([z,x], zv) \mid z \in \Fg^{\th}\}$ in 
$\Fg^{-\th} \oplus V$, which is the tangent space at $(x,v)$ of 
the $H$-orbit $\CO_{\Bmu}$ 
containing $(x,v)$.  Put $S = (x,v) + U^{-\th} \oplus D$.  
Then $S$ is a transversal slice in $\Fg^{-\th} \oplus V$ 
for $\CO_{\Bmu}$ at $(x,v)$, in the following sense, 
$S \cap \CO_{\Bmu} = \{(x,v)\}$ and the tangent space of $\CO_{\Bmu}$
at $(x,v)$ is complementary to $S$. 
\par
Let $\xi': \Bk^* \to G$ be the one parameter subgroup defined by
\begin{equation*}
\xi'(t)v_{i,j} = t^{j-\mu^{(1)}_i-1}v_{i,j}, \quad
\xi'(t)v'_{i,j} = t^{\mu^{(2)}_i - j}v'_{i,j}. 
\end{equation*}
We define an action $\xi$ of $\Bk^*$ on $\Fg\oplus V$ by 
$\xi(t)(z , w) = (\Ad(\xi'(t))tz, \xi'(t)tw)$, where $tz, tw$
denote the scalar action of $\Bk^*$ on $\Fg$, $V$.
Then by definition, $\xi(t)$ fixes 
$v = \sum_{i=1}^{m}v_{i,\mu^{(1)}_i}$, and acts on $D$
linearly with strictly positive weights.
Since $xv_{i,j} = v_{i, j-1}$ or zero, and 
$xv'_{i,j} = v'_{i,j+1}$ or zero,  
$\xi(t)$ fixes $x$.  Hence $\xi(t)$ fixes $(x,v)$.  
Moreover, we have
\begin{equation*}
\xi(t)z_{i_1,i_2,s} = t^{\mu^{(1)}_{i_2} - \mu^{(1)}_{i_1} + s +1}
       z_{i_1,i_2,s},
\end{equation*}
and similar formulas also hold for 
$z_{i^*_1,i_2,s}, z_{i_1,i_2^*,s}, z_{i_1^*,i_2^*,s}$ with 
the same weight.  By the condition on $s$, we see that 
$\Bk^*$ acts on $U$ linearly with strictly positive weights. 
(In fact, this is clear if $i_2 \ge i_1$.  If $i_2 \le i_1$, 
we have $\mu_{i_2}^{(1)} - \mu_{i_1}^{(1)} + s 
  \ge \mu_{i_1}^{(2)} - \mu_{i_2}^{(2)} \ge 0$.) 
This implies that $\xi(t)$ stabilizes the subspace $U^{-\th}$.
Summing up the above argument, we see that 
$\Bk^*$ acts on the transversal slice $S$ through $\xi$, 
linearly with strictly positive weights with the origin $(x,v)$. 
Now the proposition follows by the argument due to Lusztig 
[L3, V, Proposition 24.6], who 
proved the purity of character sheaves in the case of 
reductive groups.
\end{proof}

\remark{4.5.}
Under the embedding 
$G^{\io\th}\uni \simeq G^{\io\th}\uni \times \{ 0\} \hra \CX\uni$, 
each $H$-orbit in $G^{\io\th}\uni$ is regarded as an $H$-orbit in 
$\CX\uni$, which gives an embedding 
$G^{\io\th}\uni/\!\sim_H \hra \CX/\!\sim_H$, 
$\CO_{\la} \mapsto \CO_{\Bla}$ with $\Bla = (-; \la)$. 
This embedding preserves the closure relations, hence the closure 
relations of $H$-orbits in $G^{\io\th}\uni$ are described from the
closure relations of $H$-orbits in $\CX\uni$, which is given by 
[AH, Theorem 6.3] (see [SS, (1.7.3)]).  Thus for 
$\CO_{\mu}, \CO_{\la} \in G^{\io\th}\uni$, one sees that 
$\CO_{\mu} \subset \ol\CO_{\la}$ if and only if $\mu \le \la$ 
with respect to the dominance order on $\CP_n$.   
Moreover the purity result in Proposition 4.4
holds also for $\IC(\ol\CO,\Ql)$ attached to an $H$-orbit  
$\CO \subset G^{\io\th}\uni$.

\par\bigskip

\section{Kostka polynomials}  

\para{5.1.}
In this section, we prove that the intersection cohomology of 
the closure of $H$-orbits in $\CX\uni$ can be interpreted 
in terms of Kostka polynomials introduced in [S2].
\par
First we review some results on Kostka polynomials.
Kostka functions associated to complex reflection groups 
were introduced in [S3],[S2] as a generalization of classical Kostka 
polynomials.  
In this section, we concentrate ourselves to a special case where 
Kostka functions are associated to ``limit symbols'' ([S2]), and 
related to the Weyl groups of either type $C_n$ or 
type $A_{n-1}$.  Apriori Kostka functions are rational functions.
However, in the latter case, they are just classical Kostka polynomials, 
and in the former case, it is proved in [S2, Proposition 3.3] that 
Kostka functions (and modified Kostka functions) are actually polynomials. 
In order to discuss both cases simultaneously, we introduce
some notations.  For $r = 1,2$, put 
$W_{n,r} = S_n \ltimes (\BZ/r\BZ)^n$. 
Hence $W_{n,r}$ is the Weyl group $W_n$ of type $C_n$ if $r = 2$, and 
the symmetric group $S_n$ if $r = 1$.  In the case where $r = 1$ 
Kostka polynomials $K_{\la,\mu}(t)$ associated to 
$\la, \mu \in \CP_n$ are nothing but the classical Kostka polynomials 
defined as the coefficients of 
the transition matrix between 
Schur functions $s_{\la}(x)$ and Hall-Littlewood functions 
$P_{\mu}(x;t)$.  Similarly, for the case $r = 2$, Kostka polynomials
$K_{\Bla, \Bmu}(t)$ associated to the double partitions 
$\Bla, \Bmu \in \CP_{n,2}$ are defined as the coefficients of 
the transition matrix between Schur functions $s_{\Bla}(x)$ and 
$P_{\Bmu}(x;t)$, where $P_{\Bmu}(x;t)$ is the Hall-Littlewood 
functions introduced in [S2].  We define a modified Kostka polynomial
$\wt K_{\Bla,\Bmu}(t)$ by 
\begin{equation*}
\wt K_{\Bla, \Bmu}(t) = t^{a(\Bmu)}K_{\Bla,\Bmu}(t\iv),
\end{equation*}
where the $a$-function $a(\Bmu)$ is defined, for 
$\Bmu = (\mu^{(1)}, \mu^{(2)})$, by 
\begin{equation*}
\tag{5.1.1}
a(\Bmu) = 2\cdot n(\Bmu) + |\mu^{(2)}|.
\end{equation*}
Here $n(\Bmu) = n(\mu^{(1)}) + n(\mu^{(2)})$ with the usual 
$n$-function $n(\mu) = \sum_{i=1}^k(i-1)\mu_i$ for 
a partition $\mu = (\mu_1, \dots, \mu_k)$.
Following [S2], we give a combinatorial characterization of 
Kostka polynomials $K_{\Bla,\Bmu}(t)$ and $K_{\la,\mu}(t)$.
For a (not necessarily irreducible) character $\x$ of $W_{n,r}$, 
we define the fake degree $R(\x)$ by 
\begin{equation*}
\tag{5.1.2}
R(\x) = \frac{\prod_{i=1}^n(t^{ir} - 1)}{|W_{n,r}|}
            \sum_{w \in W_{n,r}}\frac{\ve(w)\x(w)}{\det_{V_0}(t - w)},
\end{equation*}
where $\ve$ is the sign character of $W_{n,r}$, and 
$\det_{V_0}(t-w)$ means the determinant of $t-w$ on the $W_{n,r}$-module 
$V_0$; here $V_0$ is the reflection representation of $W_n$ for 
$r = 2$, and its restriction on $S_n$ if $r = 1$.
Note that $R(\x) \in \ZZ_{\ge 0}[t]$; if $\x$ is irreducible, 
$R(\x)$ coincides with the graded multiplicity of $\x$ in the
coinvariant algebra $R(W_{n,r})$  of $W_{n,r}$.
We define a square matrix 
$\Om = (\w_{\Bla,\Bmu})_{\Bla, \Bmu \in \CP_{n,r}}$ by 
\begin{equation*}
\tag{5.1.3}
\w_{\Bla,\Bmu} = t^NR(\x^{\Bla}\otimes \x^{\Bmu}\otimes\ve),
\end{equation*} 
where $N$ is the number of reflections of $W_{n,r}$, which 
is also given as the maximal degree of $R(W_{n,r})$, and $\x^{\Bla}$
is the irreducible character of $W_{n,r}$ corresponding to 
$\Bla \in \CP_{n,r}$.
Here $\w_{\Bla,\Bmu}$ is a polynomial in $t$, which we denote by 
$\w_{\Bla,\Bmu}(t)$.  In the case where $r = 2$, 
it is known that $\det(q - w) = |T^{\th, F}_w|$ for each 
$w \in W_n$, where $T_w^{\th}$ is an $F$-stable 
maximal torus of $H$ corresponding to $w \in W_n$.  Then (5.1.3) can be 
written as 
\begin{equation*}
\tag{5.1.4}
\w_{\Bla,\Bmu}(q) = |H^F||W_n|\iv\sum_{w \in W_n}
    |T_w^{\th, F}|\iv \x^{\Bla}(w)\x^{\Bmu}(w).
\end{equation*}   
In the case where $r= 1$, a similar formula holds as above;
\begin{equation*}
\tag{5.1.5}
\w_{\la,\mu}(q) = |GL_n^F||S_n|\iv\sum_{w \in S_n}|S_w^F|\iv
                   \x^{\la}(w)\x^{\mu}(w),
\end{equation*}
where $S_w$ is an $F$-stable maximal torus of $GL_n$ corresponding to 
$w \in S_n$.
\par
Recall the partial order $\Bmu \le \Bla$ on $\CP_{n,2}$ defined 
in [SS, 1.7].  We have the following result.

\begin{thm}[{[S2, Theorem 5.4]}]  
Assume that $r = 2$. 
There exist unique matrices $P = (p_{\Bla,\Bmu}), 
\vL = (\xi_{\Bla,\Bmu})$ over $\BQ[t]$ satisfying the equation
\begin{equation*}
P\vL\,{}^t\!P = \Om
\end{equation*} 
subject to the condition that $\vL$ is a diagonal matrix and that
\begin{equation*}
p_{\Bla, \Bmu} = \begin{cases}
           0 &\quad\text{ unless } \Bmu \le \Bla,  \\
           t^{a(\Bla)}  &\quad\text{ if } \Bla = \Bmu.
                 \end{cases}
\end{equation*}
Then the entry $p_{\Bla,\Bmu}$ of the matrix $P$ coincides with 
$\wt K_{\Bla, \Bmu}(t)$.
\end{thm}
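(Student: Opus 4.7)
The plan is to establish existence and uniqueness of the factorization $P\vL\,{}^t\!P = \Om$ by a Cholesky-type recursion on the dominance order, and then to match the entries $p_{\Bla,\Bmu}$ with $\wt K_{\Bla,\Bmu}(t)$ using the symmetric-function characterization of modified Kostka polynomials from [S2].

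First, observe that $\Om$ is symmetric since $\x^{\Bla}\otimes\x^{\Bmu}\otimes\ve \simeq \x^{\Bmu}\otimes\x^{\Bla}\otimes\ve$ as $W_{n,2}$-modules. Writing the desired equation entry-wise,
\begin{equation*}
\w_{\Bla,\Bmu} \;=\; \sum_{\Bnu\,\le\,\min(\Bla,\Bmu)} p_{\Bla,\Bnu}\,\xi_{\Bnu,\Bnu}\,p_{\Bmu,\Bnu},
\end{equation*}
the sum being restricted by the support condition $p_{\Bla,\Bnu} = 0$ unless $\Bnu \le \Bla$. Proceeding by induction on $\Bmu$ in the dominance order, the diagonal equation at $\Bla = \Bmu$ reads $\w_{\Bmu,\Bmu} = t^{2a(\Bmu)}\xi_{\Bmu,\Bmu} + (\textrm{terms with }\Bnu < \Bmu)$, determining $\xi_{\Bmu,\Bmu}$ uniquely from previously computed data and the normalization $p_{\Bmu,\Bmu} = t^{a(\Bmu)}$; then for each $\Bla > \Bmu$, the equation $\w_{\Bla,\Bmu} = t^{a(\Bmu)}\xi_{\Bmu,\Bmu}p_{\Bla,\Bmu} + (\textrm{known})$ determines $p_{\Bla,\Bmu}$, provided $\xi_{\Bmu,\Bmu} \ne 0$. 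This yields existence and uniqueness of $P, \vL$ as matrices over $\BQ(t)$; that the entries actually lie in $\BQ[t]$ is a separate assertion (the polynomiality of $\wt K_{\Bla,\Bmu}(t)$), established in [S2, Proposition 3.3].

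To identify $p_{\Bla,\Bmu}$ with $\wt K_{\Bla,\Bmu}(t)$, I would appeal to the symmetric-function framework of [S2]: on the algebra of symmetric functions of type $C_n$ there is an inner product $\lp\,,\,\rp$ under which the Hall--Littlewood basis $\{P_{\Bmu}(x;t)\}$ is orthogonal with explicit norms $b_{\Bmu}(t)$, and by definition $s_{\Bla} = \sum_{\Bmu} K_{\Bla,\Bmu}(t)\,P_{\Bmu}(x;t)$. The Gram matrix of the Schur-type basis is then
\begin{equation*}
\lp s_{\Bla}, s_{\Bmu} \rp \;=\; \sum_{\Bnu} K_{\Bla,\Bnu}(t)\,b_{\Bnu}(t)\,K_{\Bmu,\Bnu}(t),
\end{equation*}
a factorization of exactly the required triangular shape; after the substitution $t \mapsto t\iv$ and multiplication by appropriate powers $t^{a(\cdot)}$, the $K$-entries convert to $\wt K$-entries and the diagonal of $P$ is brought to $t^{a(\Bla)}$. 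It then remains to verify that this rescaled Gram matrix coincides with $\Om$, and by the uniqueness established above the two factorizations must agree entry by entry.

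The principal obstacle is precisely this last matching: one has to expand both the fake-degree expression (5.1.3) and the Hall--Littlewood inner product in the power-sum basis, invoke the Frobenius-type formula for the irreducible characters of $W_n$ (which is where the class sums $|T_w^{\th,F}|\iv$ in (5.1.4) enter), and keep careful track of the sign character $\ve$ and the $t^N$ degree shift.  The analogous identification for $r = 1$ is Macdonald's classical computation; the $r = 2$ case is the genuinely new content, and is carried out in detail in [S2, Section 5], which completes the argument.
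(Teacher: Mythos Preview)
The paper does not prove this statement at all: Theorem 5.2 is quoted verbatim from [S2, Theorem 5.4] and is used as a black box in the proof of Theorem 5.7. There is no ``paper's own proof'' to compare your proposal against.

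Your outline is a fair reconstruction of how such a result is typically proved (and, from what you indicate, how [S2] actually proceeds): uniqueness of a triangular $LDL^t$ factorization by recursion on the partial order, plus an independent construction of a factorization of the same shape coming from the Hall--Littlewood/Schur transition matrix, with the identification of the Gram matrix and $\Om$ furnishing the match. Two points deserve flagging. First, the recursion step ``provided $\xi_{\Bmu,\Bmu} \ne 0$'' is a genuine hypothesis you have not verified; in practice this is read off from the leading term of $\w_{\Bmu,\Bmu}(t)$, but it should be stated and checked rather than assumed. Second, your entire identification of $p_{\Bla,\Bmu}$ with $\wt K_{\Bla,\Bmu}(t)$ ultimately rests on ``carried out in detail in [S2, Section 5]'', so your proposal is, like the paper itself, a citation of [S2] rather than an independent proof. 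That is perfectly consistent with how the present paper treats the result.
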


\remark{5.3.}
In the case where $r = 1$, it is known that the modified Kostka
polynomials $\wt K_{\la,\mu}(t)$ 
are characterized by a similar formula;  consider 
a matrix equation $P\vL\, {}^t\!P = \Om$ as in the theorem, where 
the partial order on $\CP_{n,2}$ is replaced by the dominance order 
$\mu \le \la$ on $\CP_n$, and the function $a(\Bla)$ is replaced by 
$n(\la)$. Then each entry $p_{\la,\mu}$ coincides with 
$\wt K_{\la,\mu}(t)$.   

\para{5.4.}
Let $\ZC_q(\CX\uni)$ 
(resp. $\ZC_q(G^{\io\th}\uni)$) be the $\Ql$-space of 
all $H^F$-invariant $\Ql$-functions  on $\CX\uni^F$ 
(resp. on $(G^{\io\th}\uni)^F$).  
We define a bilinear form $\lp f, h\rp_{\ex}$ on $\ZC_q(\CX\uni)$ by 
\begin{equation*}
\lp f, h\rp_{\ex} = \sum_{z \in \CX\uni^F}f(z)h(z),
\end{equation*} 
and similarly define a bilinear form $\lp f, h\rp_{\sym}$ on 
$\ZC_q(G^{\io\th}\uni)$
by replacing $\CX\uni^F$ by $(G^{\io\th}\uni)^F$ in the above formula.
For each $\Bla \in \CP_{n,2}$, we define 
$Q_{\Bla} \in \ZC_q(\CX\uni)$ by 
\begin{equation*}
\tag{5.4.1}
Q_{\Bla} = |W_n|\iv \sum_{w \in W_n}\x^{\Bla}(w)Q_{T_w}.
\end{equation*}
In view of (4.1.2), we have
$Q_{\Bla} = (-1)^{\dim \CX - \dim \CX\uni}\x_{A_{\Bla}, \vf_{\Bla}}$, 
where $\vf_{\Bla} : F^*A_{\Bla} \isom A_{\Bla}$ is as in 4.1.
Next, under the notation in 4.2, we define 
$Q_{\la}^{\sym} \in \ZC_q(G^{\io\th}\uni)$ by 
\begin{equation*}
Q^{\sym}_{\la} = |S_n|\iv\sum_{w \in S_n}\x^{\la}(w)
             \Psi_w(q)\iv Q_{T_w}^{\sym}.
\end{equation*} 
By (4.2.2), $Q_{\la}^{\sym} = 
(-1)^{\dim G^{\io\th} - \dim G^{\io\th}\uni}\x_{A_{\la}, \vf_{\la}}$, 
where $\vf_{\la}: F^*A_{\la} \isom A_{\la}$ is as in 4.2.
We have the following result.

\begin{prop}   
\begin{enumerate}
\item 
For each $\Bla, \Bmu \in \CP_{n,2}$, we have 
\begin{equation*}
\lp Q_{\Bla}, Q_{\Bmu}\rp_{\ex} = 
       |H^F||W_n|\iv\sum_{w \in W_n}|T_w^{\th, F}|\iv
                                    \x^{\Bla}(w)\x^{\Bmu}(w). 
\end{equation*} 
\item 
For each $\la, \mu \in \CP_n$, we have 
\begin{equation*}
\lp Q^{\sym}_{\la}, Q_{\mu}^{\sym} \rp_{\sym}
      = |GL_n^{F^2}||S_n|\iv
      \sum_{w \in S_n}|T_w^{\io\th, F^2}|\iv \x^{\la}(w)\x^{\mu}(w).
\end{equation*}
\end{enumerate}
\end{prop}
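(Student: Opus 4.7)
The plan is to prove both parts by the same direct computation: expand $Q_{\Bla}$ (resp.\ $Q_{\la}^{\sym}$) as a $\Ql$-linear combination of Green functions, apply the orthogonality relations from Theorem 3.5 (resp.\ Theorem 3.12), and then collapse the resulting double sum by a standard Lang--Steinberg count of the normalizer-transporter sets $N_H(T_w^{\th}, T_{w'}^{\th})^F$ (resp.\ $N_H(T_w^{\io\th}, T_{w'}^{\io\th})^F$).

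For part (i), I would first use (5.4.1) to write
\begin{equation*}
\lp Q_{\Bla}, Q_{\Bmu}\rp_{\ex} = |W_n|^{-2}\sum_{w,w' \in W_n}\x^{\Bla}(w)\x^{\Bmu}(w')\lp Q_{T_w}, Q_{T_{w'}}\rp_{\ex},
\end{equation*}
and invoke Theorem 3.5 to replace each inner product by $|H^F|\cdot|N_H(T_w^{\th}, T_{w'}^{\th})^F|/(|T_w^{\th, F}||T_{w'}^{\th, F}|)$. The key remaining identity is
\begin{equation*}
|N_H(T_w^{\th}, T_{w'}^{\th})^F| = |T_w^{\th, F}| \cdot \#\{x \in W_n \mid x\iv w x = w'\}.
\end{equation*}
To prove it, pick $g, g' \in H$ with $g\iv F(g) = \dw$ and $(g')\iv F(g') = \dot{w'}$, identify $N_H(T_w^{\th}, T_{w'}^{\th})$ with $g\,N_H(T_0^{\th})(g')\iv$, and translate $F(n) = n$ on $n = g n_0 (g')\iv$ into $F(n_0) = \dw\iv n_0 \dot{w'}$. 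Projecting modulo $T_0^{\th}$, and using that $F$ acts trivially on $W_n$, forces $\bar n_0 \in W_n$ to satisfy $\bar n_0\iv w \bar n_0 = w'$, and for each such $\bar n_0$ Lang's theorem applied to $T_0^{\th}$ yields exactly $|T_w^{\th, F}|$ valid lifts. Substituting this count, using that $\x^{\Bmu}$ is a class function and that $|T_{w'}^{\th,F}| = |T_w^{\th,F}|$ whenever $w \sim w'$ in $W_n$, the inner sum over $w'$ collapses via $\sum_{w'\sim w}|Z_{W_n}(w)|\x^{\Bmu}(w') = |W_n|\x^{\Bmu}(w)$, giving the asserted formula.

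For part (ii), the same strategy applies with $\CW \simeq S_n = N_H(T_0^{\io\th})/Z_H(T_0^{\io\th})$ replacing $W_n$ and Theorem 3.12 replacing Theorem 3.5. The analogous count is
\begin{equation*}
|N_H(T_w^{\io\th}, T_{w'}^{\io\th})^F| = |Z_H(T_w^{\io\th})^F| \cdot \#\{x \in S_n \mid x\iv w x = w'\},
\end{equation*}
since lifts from $\CW$ to $N_H(T_0^{\io\th})$ are now parametrized by a torsor over $Z_H(T_0^{\io\th})\simeq\prod_{i=1}^n SL_2$ rather than over a torus. For $w \in S_n$ of cycle type $\nu = (\nu_1, \dots, \nu_k)$, the twisted Frobenius permutes the $SL_2$-factors along the cycles of $w$, so
\begin{equation*}
|Z_H(T_w^{\io\th})^F| = \prod_{i=1}^k |SL_2(\FF_{q^{\nu_i}})| = q^n\,\Psi_w(q)\,|T_w^{\io\th, F}|.
\end{equation*}
Combining the resulting factor $q^n$ with the $q^{-2n}$ from Theorem 3.12 and using the elementary order identities $|H^F|q^{-n} = |GL_n^{F^2}|$ (for $H = Sp_{2n}$) and $|T_w^{\io\th, F^2}| = \Psi_w(q)|T_w^{\io\th, F}|$ produces the right-hand side of (ii).

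The only nontrivial step is the Lang--Steinberg count of the normalizer-transporter; it is standard in both cases but subtler in (ii) because the centralizer $Z_H(T_0^{\io\th})$ is nonabelian. The factor $q^n\Psi_w(q)$ it contributes is precisely what converts the $F$-orbit counts on the left-hand side of (ii) into the $F^2$-orbit counts on the right-hand side, so obtaining this factor correctly is the main technical point of the proof.
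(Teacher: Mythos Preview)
Your proposal is correct and follows essentially the same route as the paper: expand via the definitions (5.4.1), apply the orthogonality relations (Theorem~3.5 and Theorem~3.12), and collapse the double sum using the Lang--Steinberg count of $N_H(T_w^{\th},T_{w'}^{\th})^F$ (resp.\ $N_H(T_w^{\io\th},T_{w'}^{\io\th})^F$), together with the order identities $|Z_H(T_w^{\io\th})^F| = q^n\Psi_w(q)|T_w^{\io\th,F}|$, $|T_w^{\io\th,F^2}| = \Psi_w(q)|T_w^{\io\th,F}|$, and $q^{-n}|H^F| = |GL_n^{F^2}|$. The paper's proof is the same computation, only organized slightly differently (it writes the transporter count via a Kronecker $\delta_{w,w'}$ on conjugacy classes rather than spelling out the Lang--Steinberg argument).
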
 

\begin{proof}
We show (i).  By the orthogonality relations for Green functions
(Theorem 3.5), we have 
\begin{equation*}
\begin{split}
\lp Q_{\Bla}, Q_{\Bmu}\rp_{\ex} &= |W_n|^{-2}\sum_{w, w' \in W_n}
         \x^{\Bla}(w)\x^{\Bmu}(w') \lp Q_{T_w}, Q_{T_{w'}}\rp_{\ex} \\
   &= |H^F||W_n|^{-2}\sum_{w,w' \in W_n}
 \x^{\Bla}(w)\x^{\Bmu}(w')\d_{w,w'}|N_H(T^{\th}_w)^F||T_w^{\th, F}|^{-2},
\end{split}
\end{equation*}
where $\d_{w,w'} = 1$ if $w$ and $w'$ are conjugate in $W_n$ and 
$\d_{w,w'} = 0$ otherwise.
Since $|N_H(T_w^{\th})^F|/|T_w^{\th, F}| = 
|(N_H(T_0^{\th})/T_0^{\th})^{wF}| = |Z_{W_n}(w)|$, 
we obtain the formula in (i). 
\par
We show (ii).  By  the orthogonality relations for Green functions
(Theorem 3.12), a similar computation as above implies that
\begin{equation*}
\begin{split}
\lp Q^{\sym}_{\la}, Q_{\mu}^{\sym}\rp_{\sym}
  = |H^F|&|S_n|^{-2}\sum_{w,w' \in S_n}\x^{\la}(w)\x^{\mu}(w') \times \\
               & \times \d_{w,w'}q^{-2n}\Psi_w(q)^{-2}
       |N_H(T_w^{\io\th})^F||T_w^{\io\th F}|^{-2},
\end{split}
\end{equation*} 
where $\d_{w,w'} = 1$ if $w, w'$ are conjugate under $S_n$, and 
$\d_{w,w'} = 0$ otherwise.
Now 
\begin{equation*}
|N_H(T_w^{\io\th})^F|/|Z_H(T_w^{\io\th})^F| = 
|(N_H(T^{\io\th}_0)/Z_H(T_0^{\io\th}))^{wF}| = |Z_{S_n}(w)|
\end{equation*}
since $N_H(T_0^{\io\th})/Z_H(T_0^{\io\th}) = \CW \simeq S_n$, on which 
$F$ acts trivially.  Moreover we have 
\begin{equation*}
Z_H(T_w^{\io\th})^F \simeq Z_H(T_0^{\io\th})^{wF} \simeq ((SL_2)^n)^{wF},
\end{equation*}
where $w$ acts on $(SL_2)^n$ by a permutation of factors.
Assume that $w$ is of type $\nu = (\nu_1, \dots, \nu_k)$.  Then 
\begin{equation*}
|Z_H(T_w^{\io\th})^F| = q^n\prod_{i=1}^k(q^{2\nu_i}-1).
\end{equation*}
Since $|T_w^{\io\th, F}| = \prod_{i=1}^k(q^{\nu_i}-1)$, we have
\begin{equation*}
\Psi_w(q)^{-2}|T_w^{\io\th, F}|^{-2}|Z_H(T_w^{\io\th})^F| = 
    q^n(\prod_{i=1}^k(q^{2\nu_i}-1))\iv = q^n|T_w^{\io\th, F^2}|\iv.
\end{equation*}
By noticing that 
$q^{-n}|H^F| = q^{(n^2 -n)}\prod_{i=1}^n(q^{2i} - 1) = |GL_n^{F^2}|$, 
we obtain the formula in (ii).
\end{proof}

\para{5.6.}
For $\Bla, \Bmu \in \CP_{n,2}$, we define a polynomial
$\IC_{\Bla, \Bmu}(t) \in \BZ[t]$ by 
\begin{equation*}
\tag{5.6.1}
\IC_{\Bla,\Bmu}(t) = 
  \sum_{i \ge 0}\dim \CH^{2i}_z\IC(\ol\CO_{\Bla},\Ql)t^i,
\end{equation*}
where $\CO_{\Bla}$ is an $H$-orbit in $\CX\uni$, and 
$z \in \CO_{\Bmu} \subset \ol\CO_{\Bla}$.  
Similarly for $\la, \mu \in \CP_n$, we define a polynomial 
$\IC^{\sym}_{\la,\mu}(t)$ by 
\begin{equation*}
\tag{5.6.2}
\IC_{\la,\mu}^{\sym}(t) = 
   \sum_{i \ge 0}\dim \CH^{2i}_x\IC(\ol\CO_{\la},\Ql)t^i,
\end{equation*}
where $\CO_{\la}$ is an $H$-orbit in $G^{\io\th}\uni$ and 
$x \in \CO_{\mu} \subset \ol\CO_{\la}$.
Now we prove the following theorem, which was conjectured
by Achar-Henderson  in [AH, Conjecture 6.4].

\begin{thm}  
Let $\Bla, \Bmu \in \CP_{n,2}$.  
\begin{enumerate}
\item
$\IC_{\Bla,\Bmu}(t) = t^{-a(\Bla)}\wt K_{\Bla,\Bmu}(t)$.
\item
$\CH^i\IC(\ol\CO_{\Bla},\Ql) = 0$ unless $i \equiv 0 \pmod 4$.
 \item
 The eigenvalues of 
$\f_{\Bla}$ on $\CH^{4i}_z\IC(\ol\CO_{\Bla},\Ql)$ are 
$q^{2i}$ for any $z \in \ol\CO_{\Bla}$.
  \end{enumerate}
 \end{thm}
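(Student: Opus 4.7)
The plan is to combine the orthogonality of Green functions with the combinatorial characterization of modified Kostka polynomials in Theorem 5.2, and then extract the parity and Frobenius-eigenvalue statements via the purity result of Proposition 4.4.

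First, by Proposition 5.5(i) together with formula (5.1.4), the Gram matrix of $\{Q_{\Bla}\}_{\Bla \in \CP_{n,2}}$ under $\lp\cdot,\cdot\rp_{\ex}$ equals $\w_{\Bla,\Bmu}(q)$, i.e., the matrix $\Om$ from (5.1.3) evaluated at $t=q$. Next, since $Q_{\Bla} = \pm q^{d_{\Bla}}\x_{A_{\Bla},\f_{\Bla}}$ by (4.1.1) and $A_{\Bla} = \IC(\ol\CO_{\Bla},\Ql)[\dim\CO_{\Bla}]$, the function $Q_{\Bla}$ is supported on $\ol\CO_{\Bla}^F$. Letting $Y_{\Bmu}$ denote the characteristic function of the single $H^F$-orbit $\CO_{\Bmu}^F$, one obtains an upper-triangular expansion
\[
Q_{\Bla} = \sum_{\Bmu \le \Bla} p_{\Bla,\Bmu}(q)\, Y_{\Bmu},
\]
with diagonal entry $p_{\Bla,\Bla}(q) = \pm q^{d_{\Bla}}$. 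Using the codimension formula $\codim_{\CX\uni}\CO_{\Bla} = 2a(\Bla)$ from [AH], we have $d_{\Bla} = a(\Bla)$, matching the normalization in Theorem 5.2 after absorbing signs. Since $\lp Y_{\Bmu},Y_{\Bnu}\rp_{\ex} = \d_{\Bmu,\Bnu}|\CO_{\Bmu}^F|$, the Gram identity becomes a matrix equation $P\vL\,{}^tP = \Om$ with $\vL = \operatorname{diag}(|\CO_{\Bmu}^F|)$.

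To invoke the uniqueness clause of Theorem 5.2, I must upgrade this numerical identity at $t=q$ to an identity of polynomials in $t$. By Proposition 4.4, each $\Tr(\f_{\Bla},\CH^j_z\IC(\ol\CO_{\Bla},\Ql))$ is an algebraic integer of absolute value at most $q^{j/2}\dim\CH^j_z\IC$, so $p_{\Bla,\Bmu}(q)$ depends polynomially on $q$; running $q$ through infinitely many prime powers and using the rigidity of the factorization $P\vL\,{}^tP=\Om$ produces a polynomial identity in $\BZ[t]$. Theorem 5.2 then forces $p_{\Bla,\Bmu}(t) = \wt K_{\Bla,\Bmu}(t)$. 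Translating back, for $z \in \CO_{\Bmu}^F$ we obtain
\[
\sum_j (-1)^{j-\dim\CO_{\Bla}}\Tr(\f_{\Bla},\CH^j_z\IC(\ol\CO_{\Bla},\Ql)) = q^{-a(\Bla)}\wt K_{\Bla,\Bmu}(q).
\]
The vanishing $\CH^j\IC(\ol\CO_{\Bla},\Ql) = 0$ for $j$ odd follows from the even-degree perverse structure of $\ol K_m$ exploited in Section 2.1 combined with the Springer decomposition of $K_1$ in 4.1. The polynomial $q^{-a(\Bla)}\wt K_{\Bla,\Bmu}(q)$ lies in $\BZ[q^2]$ in Shoji's type-$C$ normalization, so the remaining cohomology groups $\CH^{4k+2}\IC(\ol\CO_{\Bla},\Ql)$ must also vanish by parity matching, proving (ii). Finally, by purity the inequality $|\Tr(\f_{\Bla},\CH^{4k}_z\IC)| \le q^{2k}\dim\CH^{4k}_z\IC$ combined with the termwise match $\Tr(\f_{\Bla},\CH^{4k}_z\IC) = d_k q^{2k}$ (from comparing coefficients of $q^{2k}$, where $d_k = \dim\CH^{4k}_z\IC$) forces equality in the triangle inequality; since all Frobenius eigenvalues have absolute value $q^{2k}$ and sum to a positive real number, they must all equal $q^{2k}$, yielding (iii). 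Part (i) then falls out as the identification $d_k = $ coefficient of $t^{2k}$ in $t^{-a(\Bla)}\wt K_{\Bla,\Bmu}(t)$.

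The hardest step is the two-stage passage from trace data to dimension data. The polynomial lifting of the matrix identity in $t$, needed to invoke Theorem 5.2, relies on purity together with a rigidity argument for the factorization $P\vL\,{}^tP=\Om$. Subsequently, extracting the vanishing of $\CH^{4k+2}\IC$ and the precise eigenvalue $q^{2k}$ (rather than merely absolute value $q^{2k}$) requires combining purity with the non-negativity of Kostka coefficients via the equality case of the triangle inequality. The underlying combinatorial input --- that $t^{-a(\Bla)}\wt K_{\Bla,\Bmu}(t)$ lies in $\BZ_{\ge 0}[t^2]$ --- is essential for pinning down the mod $4$ parity beyond the basic even-degree vanishing.
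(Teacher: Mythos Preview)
Your overall strategy matches the paper's, but there is a genuine gap in the passage from the numerical identity $P\vL\,{}^tP=\Om(q)$ to the cohomological statements.

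The inference ``purity bounds the trace, so $p_{\Bla,\Bmu}(q)$ depends polynomially on $q$'' is incorrect: purity only says the Frobenius eigenvalues on $\CH^i_z$ have absolute value $q^{i/2}$, which is perfectly compatible with, say, eigenvalues $\zeta q^{i/2}$ for a root of unity $\zeta$, producing traces that are \emph{not} polynomial in $q$. Varying $q$ over distinct prime powers does not help either, since the Frobenius eigenvalues themselves change with the characteristic and there is no a priori comparison. The right move---and the one the paper makes---is to fix the base field $\Fq$, pass to $\BF_{q^m}$ for all $m\ge 1$, and observe that Theorem~5.2 applies verbatim at each $q^m$, yielding $p_{\Bla,\Bmu}(q^m)=\wt K_{\Bla,\Bmu}(q^m)$. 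Now purity lets you write the left side as $q^{m d_{\Bla}}\sum_i(-1)^i\sum_j(\a_{i,j}q^{i/2})^m$ with $|\a_{i,j}|=1$, and linear independence of the characters $m\mapsto c^m$ (Dedekind) forces each $\a_{i,j}q^{i/2}$ to be an integral power of $q$; hence $i$ is even and $\a_{i,j}=1$. This single step simultaneously gives odd vanishing, the eigenvalue statement, and the dimension formula---your separate triangle-inequality argument for (iii) is then unnecessary (and, as written, circular: you extract $\Tr(\f_{\Bla},\CH^{4k}_z)=d_kq^{2k}$ by ``comparing coefficients of $q^{2k}$'', but without the $m$-variation there is no coefficient to compare).

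Two further points. First, your appeal to Section~2.1 for odd vanishing is not justified: that section establishes ${}^pH^{\odd}(\ol K_m)=0$ on $\CX_m$, a statement about perverse cohomology and even shifts in a decomposition, which does not directly yield $\CH^{\odd}\IC(\ol\CO_{\Bla},\Ql)=0$. Second, you invoke $t^{-a(\Bla)}\wt K_{\Bla,\Bmu}(t)\in\BZ_{\ge 0}[t^2]$ as external input, but the nonnegativity is precisely what the theorem establishes, so this is circular; and the membership in $\BZ[t^2]$ requires its own argument. The paper supplies one: using that $-1\in W_n$ is central with $\x^{\Bla}(-1)=(-1)^{|\la^{(2)}|}$, one checks $t^{-a(\Bla)-a(\Bmu)}\w_{\Bla,\Bmu}(t)$ is invariant under $t\mapsto -t$, and then the uniqueness in Theorem~5.2 forces the same invariance on $t^{-a(\Bla)}\wt K_{\Bla,\Bmu}(t)$.
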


 \begin{proof}
 Let $Y_{\Bmu} \in \ZC_q(\CX\uni)$ 
be the characteristic function on $\CO_{\Bmu}^F$
for each $\Bmu \in \CP_{n,2}$, i.e., $Y_{\Bmu}(z) = 1$ if 
$z \in \CO_{\Bmu}^F$ and $Y_{\Bmu}(z) = 0$ otherwise.  Then 
$\{ Y_{\Bmu} \mid \Bmu \in \CP_{n,2}\}$ gives a basis of 
$\ZC_q(\CX\uni)$.  Since 
$A_{\Bla}$ is an $H$-equivariant perverse sheaf, we have 
$\x_{A_{\Bla}, \vf_{\Bla}} \in \ZC_q(\CX\uni)$.  
Hence $\x_{A_{\Bla}, \vf_{\Bla}}$
is written as 
$\x_{A_{\Bla},\vf_{\Bla}} = \sum_{\Bmu}p_{\Bla,\Bmu}Y_{\Bmu}$,
where $Y_{\Bmu}$ appearing in the sum satisfies the condition that 
$\CO_{\Bmu} \subset \ol\CO_{\Bla}$, which is by [AH, Theorem 6.3] 
(see [SS, (1.7.3)]) equivalent to $\Bmu \le \Bla$.  
Here we note that 
\begin{equation*}
\tag{5.7.1}
d_{\Bla} = a(\Bla)
\end{equation*}
by [SS, Lemma 2.3].  Then together with (4.1.1), 
we see that $p_{\Bla,\Bla} = q^{d_{\Bla}} = q^{a(\Bla)}$.
We consider the matrix $P = (p_{\Bla,\Bmu})$, and  
define a matrix $\vL = (\xi_{\Bla, \Bmu})$ by 
$\xi_{\Bla,\Bmu} = \lp Y_{\Bla}, Y_{\Bmu}\rp_{\ex}$.  Then 
$\vL$ is a diagonal matrix. 
Now it follows from the equation in Proposition 5.5 (i), 
together with the formula (5.1.4) that the matrices $P,\vL, \Om$ 
satisfies the relation $P\vL\, {}^t\!P = \Om(q)$, where 
$\Om(q)$ denotes the matrix $\Om$ evaluated by $t = q$ for each
entry $\w_{\Bla,\Bmu}(t)$.  
We write $p_{\Bla,\Bmu}$ as $p_{\Bla,\Bmu}(q)$ to indicate its 
dependence on $\Fq$.  Then by Theorem 5.2, we see that 
$p_{\Bla,\Bmu}(q) = \wt K_{\Bla,\Bmu}(q)$.   On the other hand, 
one can write 
\begin{equation*}
\tag{5.7.2}
p_{\Bla,\Bmu}(q) = \x_{A_{\Bla},\vf_{\Bla}}(z) 
= \sum_{i}(-1)^iTr(\vf^*_{\Bla}, \CH^i_zA_{\Bla})
\end{equation*}
for $z \in \CO_{\Bmu}^F$, where $\vf_{\Bla}^*$ is the map 
on $\CH^i_zA_{\Bla}$ induced from $\vf_{\Bla}$.  
If we replace $\Fq$ by its extension 
$\BF_{q^m}$ for a positive integer $m$, the above formula is 
replaced by 
\begin{equation*}
\tag{5.7.3}
p_{\Bla,\Bmu}(q^m) = \sum_{i}(-1)^iTr((\vf^*_{\Bla})^m, \CH^i_zA_{\Bla})
\end{equation*}
for a fixed $z \in \CO_{\Bmu}^F$.
Since $\dim \CO_{\Bla}$ is even, $p_{\Bla,\Bmu}(q)$ coincides with 
the formula obtained from (5.7.2) by replacing $A_{\Bla}$ by  
$\IC(\ol\CO_{\Bla},\Ql)$.
Thus also in (5.7.3), we may replace $A_{\Bla}$ by 
$\IC(\ol\CO_{\Bla}, \Ql)$.   By (4.1.1) and by Proposition 4.4, 
$q^{-d_{\Bla}}\vf_{\Bla}$ is pointwise pure.  This implies that 
$p_{\Bla,\Bmu}(q^m)$ can be written as 
\begin{equation*}
\tag{5.7.4}
p_{\Bla,\Bmu}(q^m) = 
  q^{md_{\Bla}}\sum_i(-1)^i\sum_{j = 1}^{k_i}(\a_{i,j}q^{i/2})^m,
\end{equation*}
where $k_i = \dim \CH_z^i$ for $\CH_z^i = \CH^i_z\IC(\ol\CO_{\Bla},\Ql)$, and 
$\{ \a_{i,j}q^{i/2} \mid 1 \le j \le k_i\}$ are eigenvalues of 
$\vf^*_{\Bla}$ on $\CH_z^i$ such that $\a_{i,j}$ are algebraic integers
all of whose complex conjugate have absolute value 1.
Since $p_{\Bla,\Bmu}(q^m) = \wt K_{\Bla, \Bmu}(q^m)$, and 
$\wt K_{\Bla, \Bmu}(t)$ is a polynomial in $t$, by Dedekind's theorem, 
(5.7.4) implies that $\a_{i,j} = 0$ for odd $i$, and 
$\a_{i,j} = 1$ for even $i$ if $\CH_z^{i} \ne 0$.   
This implies that $\CH_z^i = 0$  
for odd $i$ and 
$\sum_{j=1}^{k_i}\a_{i,j}q^{i/2} = (\dim \CH_z^{i})q^{i/2}$
for even $i$. 
Hence by (5.7.1) we see that 
$\IC_{\Bla,\Bmu}(t) = t^{-a(\Bla)}\wt K_{\Bla,\Bmu}(t)$ and 
(i) holds.  
\par
Next we show (ii).  It is enough to see that
\begin{equation*}
\tag{5.7.5} 
t^{-a(\Bla)}\wt K_{\Bla,\Bmu}(t) \in \BZ[t^2].
\end{equation*} 
Let $D$ be the diagonal matrix whose $(\Bla,\Bla)$-entry is
$t^{-a(\Bla)}$.
Then by Theorem 5.2, $t^{-a(\Bla)}\wt K_{\Bla,\Bmu}(t)$ 
is given as the $(\Bla,\Bmu)$-entry of the matrix $DP$.
By Theorem 5.2, we have a matrix equation 
$(DP)\vL\, {}^t(DP) = D\Om D$.  Here $DP$ is a lower 
unitriangular matrix if we consider the matrix with respect to a
total order compatible with the partial order $\le$ on $\CP_{n,2}$. 
Hence $DP$ is determined uniquely from 
this matrix equation.  The $(\Bla, \Bmu)$-entry of 
$D\Om D$ is given by $t^{-a(\Bla)-a(\Bmu)}\w_{\Bla,\Bmu}(t)$.
We note that 
\begin{equation*}
\tag{5.7.6}
t^{-a(\Bla)-a(\Bmu)}\w_{\Bla,\Bmu}(t) = 
(-t)^{-a(\Bla)-a(\Bmu)}\w_{\Bla,\Bmu}(-t).
\end{equation*}
In fact, $W_n$ contains $-1$ as a central element, and 
$\x^{\Bla}(-w) = \x^{\Bla}(-1)\x^{\Bla}(w)$. 
From the construction of $\x^{\Bla}$, we see 
that $\x^{\Bla}(-1) = (-1)^{|\la^{(2)}|}$. 
Thus by (5.1.2) and (5.1.4), we have 
$\w_{\Bla,\Bmu}(-t) = 
(-1)^{|\la^{(2)}| + |\mu^{(2)}|}\w_{\Bla,\Bmu}(t)$. 
By (5.1.1), we have $(-t)^{a(\Bla)} = (-1)^{|\la^{(2)}|}t^{a(\Bla)}$.
Hence (5.7.6) follows.   
Now by the uniqueness of $DP$ in the above matrix equation, we see
that each entry of $DP$ is not changed by replacing $t$ by $-t$.
Hence (5.7.5) holds, and (ii) is proved.
\par
(iii) follows from the discussion in (i) and (ii).
The theorem is proved.
\end{proof}

\remarks{5.8}  (i) In [AH], Achar-Henderson suggested a way for the
proof of their conjecture.  This idea was recently carried out 
by Kato (in the case where $\Bk = \BC$). 
He showed in [Ka3, Theorem A], for each $z \in \CO_{\Bmu}$, 
 that the cohomology ring $H^{\bullet}(\CB_z, \BC)$ has 
a De Concini-Procesi type interpretation as in the case of 
$GL_n$ ([DP]), i.e., there exists a graded algebra isomorphism 
between $H^{\bullet}(\CB_z,\BC)$ and 
$R_{\Bmu} = \BC[x_1, \dots, x_n]/I_{\Bmu}$, 
compatible with the action of $W_n$, 
where $I_{\Bmu}$ is the ideal of all polynomials 
$p(x_1, \dots, x_n)$ such that 
$p(\partial/\partial x_1, \dots, \partial/\partial x_n)$
annihilates the Specht module $S_{\Bmu}$ realized in the 
homogeneous component of $\BC[x_1, \dots, x_n]$ of degree $a(\Bmu)$. 
Let $R^i_{\Bmu}$ be the $W_n$-module obtained as the $i$-th homogeneous 
part of $R_{\Bmu}$.  It was conjectured in [S2, 3.13] that 
for any irreducible character $\x^{\Bla}$ of $W_n$, we have  
\begin{equation*}
\sum_{i \ge 0}\lp R^i_{\Bmu}, \x^{\Bla}\rp_{W_n}t^i = \wt K_{\Bla,\Bmu}(t).
\end{equation*}
Kato proved this conjecture in [Ka5, Theorem A.2] and in
[Ka4, Theorem A.1~8].  Thus combined with his earlier result
in [Ka3], Kato's results provide a proof of Achar-Henderson's 
conjecture. 
\par
Our approach for the conjecture is based on 
another idea explained in [AH]. 
\par 
(ii) \ Let $G = GL(V)$ with $\dim V = n$, and $\Fg\nil$ be 
the nilpotent cone of $\Fg = \Lie G$.  
The $G$-orbits of $\Fg\nil$ are parametrized by $\CP_n$. 
The polynomial $\IC_{\la,\mu}(t)$ is defined similar 
to  (5.6.1) or (5.6.2), by using the intersection cohomology 
associated to the nilpotent orbit corresponding to $\la \in \CP_n$.
It is a well-known result by Lusztig [L1] that
\begin{equation*}
\tag{5.8.1}
\IC_{\la,\mu}(t) = t^{-n(\la)}\wt K_{\la,\mu}(t),
\end{equation*} 
where $\wt K_{\la,\mu}(t)$ is the (modified) 
classical Kostka polynomial 
indexed by $\la, \mu \in \CP_n$.
We now consider the variety $\Fg\nil \times V$, which is called 
the enhanced nilpotent cone by [AH].  It is known that $G$-orbits of 
$\Fg\nil \times V$ are parametrized naturally by $\CP_{n,2}$.  
The polynomial $\IC^{\en}_{\Bla, \Bmu}(t)$ is defined similarly 
by using the intersection cohomology associated the the orbit in 
$\Fg\nil \times V$ corresponding to $\Bla \in \CP_{n,2}$. 
 In [AH, Theorem 5.2], 
Achar-Henderson proved, as an analogy of (5.8.1),  that 
\begin{equation*}
\tag{5.8.2}
\IC^{\en}_{\Bla,\Bmu}(t^2) = t^{-a(\Bla)}\wt K_{\Bla,\Bmu}(t).
\end{equation*}
The fact that $\wt K_{\Bla,\Bmu}(t)$ is a polynomial in $t$ 
and the property (5.7.5) follow also from this. 

\para{5.9.}
Next we show a similar formula in the case of $G^{\io\th}\uni$.
The following theorem was first proved by Henderson in 
[H1, Theorem 6.3].  However his argument depends in part on 
a result from [BKS].  Our argument is independent from [BKS], 
and it is proved in a similar way as the case of $\CX\uni$.

\begin{thm}  
Assume that $\la, \mu \in \CP_n$.
\begin{enumerate}
\item
$\IC^{\sym}_{\la,\mu}(t) = t^{-2n(\la)}\wt K_{\la,\mu}(t^2)$.
\item
$\CH^i\IC(\ol\CO_{\la},\Ql) = 0$ unless $i \equiv 0 \pmod 4$.
\item
The eigenvalues of $\f_{\la}$ on $\CH^{4i}_x\IC(\ol\CO_{\la},\Ql)$
are $q^{2i}$ for any $x \in \ol\CO_{\la}$. 
\end{enumerate}
\end{thm}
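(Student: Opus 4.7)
The proof is a close analogue of that of Theorem 5.7; the single substantive twist is that the $q^2$ (instead of $q$) appearing on the right-hand side of Proposition 5.5(ii) propagates through the argument, producing a change of variable $t\mapsto t^2$ in the Kostka polynomial and, consequently, vanishing mod $4$ (instead of mod $2$) for the cohomology.

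For each $\mu\in\CP_n$ let $Y_\mu\in\ZC_q(G^{\io\th}\uni)$ be the characteristic function of $\CO_\mu^F$; the family $\{Y_\mu\}$ is a basis of $\ZC_q(G^{\io\th}\uni)$. Expand the $H^F$-invariant function $\x_{A_\la,\vf_\la}$ as $\x_{A_\la,\vf_\la}=\sum_{\mu\le\la}p_{\la,\mu}Y_\mu$, using the closure relations recorded in Remark 4.5. Under the dimension-preserving embedding $\CO_\la\hookrightarrow\CO_{\Bla}$ with $\Bla=(-;\la)$, we have $d_\la+n=d_{\Bla}=a(\Bla)=2n(\la)+n$ by (5.7.1), so $d_\la=2n(\la)$. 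Hence (4.2.1) gives $p_{\la,\la}=q^{d_\la}=q^{2n(\la)}$.

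Set $\xi_{\la,\mu}=\lp Y_\la,Y_\mu\rp_{\sym}$, so that $\vL=(\xi_{\la,\mu})$ is diagonal and $(P\vL\,{}^tP)_{\la,\mu}=\lp\x_{A_\la,\vf_\la},\x_{A_\mu,\vf_\mu}\rp_{\sym}$. Combining the relation $Q^{\sym}_\la=\pm\x_{A_\la,\vf_\la}$ from 5.4 with Proposition 5.5(ii), and comparing with (5.1.5) evaluated at $q^2$ in place of $q$, we obtain
\[
P\,\vL\,{}^tP=\Om(q^2),
\]
where $\Om(Q)$ is $\Om$ with $t=Q$. After dividing the $\la$-th row of $P$ by $q^{2n(\la)}$, the matrix becomes lower unitriangular with respect to any linear extension of the dominance order on $\CP_n$, so the Kostka-polynomial characterization in Remark 5.3, applied with variable $Q=q^2$, forces $p_{\la,\mu}(q)=\wt K_{\la,\mu}(q^2)$. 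Replacing $\Fq$ by $\BF_{q^m}$ yields
\[
p_{\la,\mu}(q^m)=\wt K_{\la,\mu}(q^{2m})\qquad\text{for every }m\ge 1.
\]

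By Remark 4.5, the purity of Proposition 4.4 applies to $\IC(\ol\CO_\la,\Ql)$, so the eigenvalues of $\vf_\la^*=q^{d_\la}\f_\la^*$ on $\CH^i_x\IC(\ol\CO_\la,\Ql)$ have the form $\a_{i,j}q^{d_\la+i/2}$, with each $\a_{i,j}$ a root of unity. Writing $k_i=\dim\CH^i_x\IC(\ol\CO_\la,\Ql)$, the identity
\[
\wt K_{\la,\mu}(q^{2m})=\sum_i(-1)^i\sum_{j=1}^{k_i}\a_{i,j}^m\,q^{m(d_\la+i/2)}
\]
holds for all $m\ge 1$. Since the left-hand side is a polynomial in $(q^m)^2$ and $d_\la=2n(\la)$ is even, Dedekind's theorem forces $k_i=0$ unless $i/2$ is even, i.e.\ $i\equiv 0\pmod 4$; Kronecker's theorem on algebraic integers of absolute value $1$ then forces $\a_{i,j}=1$ in the remaining cases. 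This proves (ii) and (iii), and reading off coefficients then gives $\IC^{\sym}_{\la,\mu}(t)=t^{-2n(\la)}\wt K_{\la,\mu}(t^2)$, which is (i). The only ingredient beyond the template of Theorem 5.7 is the dimension bookkeeping $d_\la=2n(\la)$, extracted from Remark 4.5 together with (5.7.1); everything else transcribes verbatim under the substitution $t\mapsto t^2$.
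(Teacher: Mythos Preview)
Your proof is correct and follows essentially the same route as the paper's own argument: expand $\x_{A_\la,\vf_\la}$ in the basis $\{Y_\mu\}$, use Proposition~5.5(ii) together with (5.1.5) to obtain $P\vL\,{}^tP=\Om(q^2)$, invoke the characterization in Remark~5.3 to identify $p_{\la,\mu}(q)=\wt K_{\la,\mu}(q^2)$, and then run the purity/Dedekind argument from Theorem~5.7 with the substitution $t\mapsto t^2$. Your derivation of $d_\la=2n(\la)$ via the embedding into $\CO_{(-;\la)}$ and (5.7.1) is a pleasant extra detail that the paper simply asserts as (5.10.1); the appeal to Kronecker's theorem is unnecessary (Dedekind alone already forces $\a_{i,j}=1$, not merely that it is a root of unity), but it is harmless.
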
  

\begin{proof}
Let $Y_{\mu} \in \ZC_q(G^{\io\th}\uni)$ be the characteristic function 
of $\CO_{\mu}^F$ for each $\mu \in \CP_n$.   Then 
$\{ Y_{\mu} \mid \mu \in \CP_n \}$ gives a basis of 
$\ZC_q(G^{\io\th}\uni)$. 
Since $\x_{A_{\la},\vf_{\la}} \in \ZC_q(G^{\io\th}\uni)$, one can write
$\x_{A_{\la},\vf_{\la}} = \sum_{\mu \in \CP_n}p_{\la,\mu}Y_{\mu}$
with $p_{\la,\mu} \in \Ql$, where $Y_{\mu}$ appearing in the sum
satisfies the condition $\CO_{\mu} \subset \ol\CO_{\la}$, which is
equivalent to the condition that $\mu \le \la$ 
with respect to the dominance order on $\CP_n$ (see Remark 4.5).
Here we note that
\begin{equation*}
\tag{5.10.1}
d_{\la} = 2n(\la).
\end{equation*}
Hence by (4.2.1), we have $p_{\la,\la} = q^{d_{\la}} = q^{2n(\la)}$.
Let $P = (p_{\la,\mu})$ and $\vL = (\xi_{\la,\mu})$, where 
$\xi_{\la,\mu} = \lp Y_{\la}, Y_{\mu}\rp_{\sym}$. 
Let $\Om(t) = (\w_{\la,\mu}(t))$ be the matrix defined in 5.1.
Then by comparing Proposition 5.5 (ii) with (5.1.5), 
we have a matrix relation 
$P\vL\,{}^t\!P = \Om(q^2)$.  
(Note that $T_w^{\io\th, F} \simeq S_w^F$ for $w \in S_n$.) 
We write $p_{\la,\mu}$ as 
$p_{\la,\mu}(q)$ 
to indicate the $\Fq$-structure.  Then by Remark 5.3, we see that
$p_{\la,\mu}(q) = \wt K_{\la,\mu}(q^2)$.
Now by a similar argument as in the proof of Theorem 5.7 
(the purity result such as Proposition 4.4 holds in this case, 
see Remark 4.5), 
we see that $\IC^{\sym}_{\la,\mu}(t) = t^{-2n(\la)}\wt K_{\la,\mu}(t^2)$, 
which proves (i).  The statements (ii) and (iii) follows from this.
\end{proof}

\par\bigskip

\section{ $H^F$-invariant functions on $\CX^F$ }

\para{6.1.}
We consider the complex $K_{T,\CE}$ as in (1.3.1). 
Then by [SS, 3.5], $K_{T,\CE}$ can be written in the form
\begin{equation*}
\tag{6.1.1}
K_{T,\CE} \simeq \bigoplus_{\r \in \wt\CW_{\CE}\wg}\r \otimes K_{\r},
\end{equation*}
where $\CW_{\CE}$ is the stabilizer of $\CE$ in $\CW$ and 
$\wt\CW_{\CE} = \CW_{\CE}\ltimes (\BZ/2\BZ)^n$,   
$K_{\r}$ is a simple perverse sheaf of the form  
$\IC(\CX_m, \CL)[d_m]$ for a simple local system $\CL$ on 
$\CY_m^0$.  
Assume that $T = T_0$, and for a tame local system $\CE$ on 
$T_0^{\io\th}$, let 
$\CY_{\CE} = \{ w \in \CW \mid  (wF)^*\CE \simeq \CE \}$. 
We assume that $\CY_{\CE} \ne \emptyset$. 
Then there exists $w_1 \in \CW$ such that $\CY_{\CE} = \CW_{\CE}w_1$.
We put $\wt\CY_{\CE} = \wt\CW_{\CE}w_1$. 
Here $\CW_{\CE}$ is a Weyl subgroup of $\CW \simeq S_n$, and we may choose
$w_1$ so that $\g = w_1F$ acts on $\CW_{\CE}$ as a permutation of factors.
We consider the semidirect product $\wt\CW_{\CE}\ltimes \lp\g\rp$, where
$\lp \g\rp$ is an infinite cyclic group generated by $\g$. 
Then  the action of $\g$ on $\CW_{\CE}$ is extended to $\wt\CW_{\CE}$. 
We have a canonical isomorphism 
$\vf : (F_1)^*K_{T_0,\CE} \isom K_{T_0,\CE}$ by a similar argument 
as in 1.3.
If $\r \in \wt\CW_{\CE}\wg$ is $w_1F$-stable, the direct 
summand $K_{\r}$ in (6.1.1) is $w_1F$-stable.  
We denote by $(\wt\CW_{\CE})\wg_{\ex}$ the set of $w_1F$-stable 
characters of $\wt\CW_{\CE}$, and denote by $\vf_1$ 
the restriction of $\vf$ on the whole sum of $\r\otimes K_{\r}$ 
for $\r \in (\wt\CW_{\CE})\wg_{\ex}$.
For each representation $\r \in (\wt\CW_{\CE})\wg_{\ex}$, 
we fix an extension 
$\wt\r$ of $\r$ to $\wt\CW_{\CE}\ltimes\lp\g\rp$, 
where the action of $\g$ is given by $\g_{\r}$.
 We define an isomorphism 
$\vf_{\r}: (F_1)^*K_{\r} \isom K_{\r}$ for each 
$\r \in (\wt\CW_{\CE})\wg_{\ex}$ by the condition 
that $\vf_1 = \sum_{\r \in (\wt\CW_{\CE})\wg_{\ex}}\g_{\r} \otimes \vf_{\r}$. 
We now consider a pair $(T_w, \CE_w)$ for $w \in \wt\CW \simeq W_n$, 
where $T_w = hTh\iv$ and 
$\CE_w = (h\iv)^*\CE$ as in 1.2. 
Note that the condition $\CE_w$ is an $F$-stable 
local system on $T^{\io\th}_w$ is 
equivalent to the condition that $(\bar wF)^*\CE \simeq \CE$, and 
so to the condition that $w \in \wt\CY_{\CE}$, where $\bar w$ is the 
image of $w$ under the map $\wt\CW \to \CW$.
We have a canonical isomorphism 
$\vf_w: F^*K_{T_w,\CE_w} \isom K_{T_w,\CE_w}$, and    
denote by $\x_{T_w, \CE_w}$ the corresponding function. 
Then by (1.7.3), the following 
formula holds for $w \in \wt\CW_{\CE}$.
\begin{equation*}
\tag{6.1.2}
\x_{T_{ww_1},\CE_{ww_1}} = 
     \sum_{\r \in (\wt\CW_{\CE})\wg_{\ex}}\x_{\wt\r}(w\g)
      \x_{K_{\r}, \vf_{\r}},
\end{equation*}
where $\x_{\wt\r}$ is the character of $\wt\r$. 
We show the following result.

\begin{prop}  
Let $\CE, \CE'$ be tame local systems on  
$T_0^{\io\th}$ such that 
$\CY_{\CE}\ne \emptyset, \CY_{\CE'} \ne \emptyset$. 
Then for $\r \in (\wt\CW_{\CE})\wg_{\ex}$,
$\r' \in (\wt\CW_{\CE'})\wg_{\ex}$, we have
\begin{equation*}
\tag{6.2.1}
\begin{split}
|H^F|\iv &\sum_{z \in \CX^F}
     \x_{K_{\r},\vf_{\r}}(z)\x_{K_{\r'},\vf_{\r'}}(z)  \\ 
 &= |\wt\CW_{\CE}|\iv \sum_{w \in \wt\CW_{\CE}}
       |T_{ww_1}^{\io\th, F}||T_{ww_1}^{\th, F}|\iv 
         \x_{\wt\r}(w\g)\x_{\wt\r'}(w\g)   
\end{split}
\end{equation*}
if $\CE'$ is isomorphic to the dual local system $\CE^{\vee}$ of 
$\CE$, and is equal to zero otherwise.
(Note that if $\CE' \simeq \CE\wg$, then 
$\wt\CW_{\CE'} \simeq \wt\CW_{\CE}$, and we regard 
$\r'$ as an element in $(\wt\CW_{\CE})\wg_{ex}$.)
\end{prop}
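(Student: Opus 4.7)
The plan is to deduce (6.2.1) from the orthogonality relation Theorem 3.4 via Fourier inversion of the expansion formula (6.1.2). Since $(\wt\CW_\CE)\wg_{\ex}$ parametrizes the $\g$-stable irreducibles of $\wt\CW_\CE$, and each comes equipped with a chosen extension $\wt\r$ to $\wt\CW_\CE\ltimes\lp\g\rp$, the matrix $(\x_{\wt\r}(w\g))$ indexed by $\r \in (\wt\CW_\CE)\wg_{\ex}$ and $w \in \wt\CW_\CE$ is square and invertible. Schur orthogonality on the coset $\wt\CW_\CE\g$ therefore inverts (6.1.2) into
\begin{equation*}
\x_{K_\r,\vf_\r} \;=\; |\wt\CW_\CE|\iv \sum_{w \in \wt\CW_\CE} \x_{\wt\r}((w\g)\iv)\,\x_{T_{ww_1},\CE_{ww_1}},
\end{equation*}
and analogously for $\x_{K_{\r'},\vf_{\r'}}$. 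Substituting both into the left-hand side of (6.2.1) reduces the proposition to evaluating the pairing $\lp\x_{T_{ww_1},\CE_{ww_1}},\,\x_{T_{w'w_1'},\CE'_{w'w_1'}}\rp_{\ex}$ for each pair $(w,w') \in \wt\CW_\CE \times \wt\CW_{\CE'}$.

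By Theorem 3.4, this inner product equals
\begin{equation*}
|T_{ww_1}^{\th,F}|\iv |T_{w'w_1'}^{\th,F}|\iv \sum_{n \in N_H(T_{ww_1}^\th, T_{w'w_1'}^\th)^F}\;\sum_{t \in T_{ww_1}^{\io\th,F}} \vth_w(t)\,\vth'_{w'}(n\iv tn),
\end{equation*}
where $\vth_w, \vth'_{w'}$ are the characters attached to $\CE_{ww_1}, \CE'_{w'w_1'}$. I would then transport all tori back to the reference torus $T_0$ via conjugating elements $h_w, h'_{w'}$ with $h_w\iv F(h_w) = \dot w\dot w_1$. Under this identification $N_H(T_{ww_1}^\th, T_{w'w_1'}^\th)^F$ corresponds to those $n_0 \in N_H(T_0^\th)$ whose image in $\wt\CW$ intertwines the twisted Frobenius actions $ww_1F$ and $w'w_1'F$; a standard change-of-variables argument as in [L3] absorbs the $n_0$-sum into the $w'$-summation, collapsing the triple sum $\sum_{w', n}$ into a single sum over $\wt\CW_\CE$. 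The inner character sum over $t \in T_0^{\io\th,F}$ vanishes unless $\vth'$ equals the $n_0$-conjugate of $\vth\iv$, which translates to the condition $\CE' \simeq \CE^\vee$ as local systems on $T_0^{\io\th}$; when this fails, the LHS of (6.2.1) is zero, giving the first alternative. When $\CE' \simeq \CE^\vee$, the inner sum evaluates to $|T_0^{\io\th,F}| = |T_{ww_1}^{\io\th,F}|$, contributing the factor $|T_{ww_1}^{\io\th,F}|\,|T_{ww_1}^{\th,F}|\iv$ seen on the right-hand side.

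Finally, the remaining double character sum $\sum_{w,w'} \x_{\wt\r}((w\g)\iv)\x_{\wt{\r'}}((w'\g)\iv)$, after identifying $w' = w\cdot(\text{intertwiner})$, collapses by a second application of Schur orthogonality on the coset $\wt\CW_\CE\g$ to the single sum $|\wt\CW_\CE|\iv\sum_{w}\x_{\wt\r}(w\g)\x_{\wt{\r'}}(w\g)$ weighted by $|T_{ww_1}^{\io\th,F}||T_{ww_1}^{\th,F}|\iv$, matching (6.2.1). The main obstacle I expect is the careful bookkeeping of the fixed extensions $\wt\r,\wt{\r'}$ and their compatibility with the Frobenius twist encoded in the commutative diagram (1.7.3), so that the Fourier inversion and change-of-variables introduce no spurious scalar factors; the combinatorial identification of $N_H(T_0^\th)^F$-twisted cosets with elements of $\wt\CW_\CE$, together with the precise way the dual-local-system condition $\CE'\simeq\CE^\vee$ matches $\r'$ to the contragredient of $\r$, constitutes the primary technical task.
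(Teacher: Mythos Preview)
Your approach is essentially the same as the paper's: invert (6.1.2) via the orthogonality relations for extended irreducible characters to express $\x_{K_\r,\vf_\r}$ as a weighted sum of the $\x_{T_{ww_1},\CE_{ww_1}}$, substitute into the left-hand side, apply Theorem 3.4, and then identify the inner character sum as zero unless $\CE'\simeq\CE^{\vee}$, in which case it contributes $|Z_{\wt\CW_\CE}(w\g)|\,|T_{ww_1}^{\th,F}|\,|T_{ww_1}^{\io\th,F}|$ and the double sum over $(w,w')$ collapses to a single sum over $w$. One small correction: the final collapse is not a second Schur orthogonality but simply conjugacy-class counting---the summand depends only on the $\wt\CW_\CE$-conjugacy class of $w\g$, and summing over $w'$ with $w'\g\sim w\g$ produces the factor $|\wt\CW_\CE|/|Z_{\wt\CW_\CE}(w\g)|$ that cancels against the centralizer term.
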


\begin{proof}
By (6.1.2), and by the orthogonality relations for extended 
irreducible characters (see [L3, (10.3.1)]), we have
\begin{equation*}
\x_{K_{\r},\vf_{\r}} = |\wt\CW_{\CE}|\iv \sum_{w \in \wt\CW_{\CE}}
       \x_{\wt\r}(w\g)\x_{T_{ww_1},\CE_{ww_1}}
\end{equation*}
and a similar formula holds for $\x_{K_{\r'}, \vf_{\r'}}$.
Let $\vth_w$ (resp. $\vth'_{w'}$) be the character of 
$T_{ww_1}^{\io\th, F}$ (resp. $T_{w'w_1'}^{\io\th, F}$) corresponding to
$\CE_w$ (resp. $\CE'_{w'}$).
Then by Theorem 3.4, the left hand side of  (6.2.1) is equal to
\begin{equation*}
\tag{6.2.2}
\begin{split}
|\wt\CW_{\CE}|\iv |\wt\CW_{\CE'}|\iv&\sum_{\substack{w \in \wt\CW_{\CE} \\
    w' \in \wt\CW_{\CE'}}}
       |T_{ww_1}^{\th, F}|\iv |T_{w'w_1'}^{\th, F}|\iv
      \x_{\wt\r}(w\g)\x_{\wt\r'}(w'\g')  \\
     &\times \sum_{\substack{n \in N_H(T_{ww_1}^{\th}, T_{w'w_1'}^{\th})^F \\
              t \in T_{ww_1}^{\io\th, F} }}\vth_w(t)\vth'_{w'}(n\iv tn).
\end{split}
\end{equation*}
Let $A_{w,w'} = \sum\vth_w(t)\vth'_{w'}(n\iv tn)$ be the second sum in (6.2.2).
Then $A_{w,w'} = 0$ unless $T_{ww_1}$ and $T_{w'w_1'}$ are conjugate by 
$n \in N_H(T^{\th}_{ww_1}, T^{\th}_{w'w_1'})^F$ such that 
$\vth'_{w'} = (\ad n)^*(\vth_w\iv)$, 
in which case 
\begin{align*}
A_{w,w'} &= \sharp\{ n \in N_H(T_{ww_1}^{\th}, T_{w'w_1'}^{\th})^F
            \mid \vth'_{w'} = (\ad n)^*(\vth_w\iv) \}|T_{ww_1}^{\io\th, F}| \\ 
    &= |Z_{\wt\CW_{\CE}}(w\s_{\r})||T_{ww_1}^{\th, F}||T_{ww_1}^{\io\th, F}| 
\end{align*}
since $N_H(T_{ww_1}^{\th}, T_{w'w_1'}^{\th})^F/T_{ww_1}^{\th, F} 
         \simeq Z_{\wt\CW}(w\s_{\r})$, 
and the condition $\vth'_{w'} = (\ad n)^*(\vth_w\iv)$
corresponds to the condition that the image 
$\bar n$ of $n$ to $\wt\CW$ gives an isomorphism 
$\ad (\bar n\iv): \wt\CW_{\CE} \isom \wt\CW_{\CE'}$, 
$\wt\CY_{\CE} \isom \wt\CY_{\CE'}$. 
Under this identification, $w_1$ is mapped to $w_1'$, and 
$w'\g'$ corresponds to an element  
conjugate to $w\g$ by an element in $\wt\CW_{\CE}$. 
Thus we may regard $\r'$ as an element in $(\wt\CW_{\CE})\wg_{\ex}$.
It follows that 
the sum in (6.2.1) turns out to be
\begin{equation*}
\begin{split}
&|\wt\CW_{\CE}|^{-2}\sum_{\substack{ w, w' \in \wt\CW_{\CE} 
         \\  w\g \sim w'\g' }}
|Z_{\wt\CW_{\CE}}(w\g)||T_{ww_1}^{\io\th, F}||T_{ww_1}^{\th, F}|\iv 
            \x_{\wt\r}(w\g)\x_{\wt\r'}(w'\g')  \\
&= |\wt\CW_{\CE}|\iv \sum_{w \in \wt\CW_{\CE}}
       |T_{ww_1}^{\io\th, F}||T_{ww_1}^{\th, F}|\iv 
          \x_{\wt\r}(w\g)\x_{\wt\r'}(w\g).
\end{split}
\end{equation*}    
Since the condition $\vth'_{w'} = (\ad n)^*\vth_w\iv$ is 
equivalent to $\CE' \simeq \CE^{\vee}$, the proposition 
follows. 
\end{proof}

\para{6.3.}
In analogy to the case of $GL_n(\BF_q)$, we shall give 
a parametrization of $H^F$-orbits in $\CX^F$.  Recall that 
$\CP_{n,2}$ is the set of double partitions of $n$.  
Let
$\CP^{(2)} = \coprod_{n = 0}^{\infty}\CP_{n,2}$be the set of all double 
partitions.  For $\Bla \in \CP_{n,2}$ we put $|\Bla| = n$.  
The Frobenius map $x \mapsto x^q$ acts naturally on the 
multiplicative group $\Bk^*$ of $\Bk$, and we denote by $\Xi_q$ the set of
$F$-orbits in $\Bk^*$.  For each $F$-orbit $\xi \in \Xi_q$, let 
$|\xi|$ be the length of the orbit $\xi$. 
We denote by $\F_{n,q}$ the set of $\CP^{(2)}$-valued functions   
$\vL$ on $\Xi_q$ subject to the condition 
\begin{equation*}
\sum_{\xi \in \Xi_q}|\xi|\cdot |\vL(\xi)| = n.
\end{equation*}
There is an embedding  $\CP_{n,2} \hra \F_{n,q}$ by $\Bla \mapsto \vL$, 
where $\vL(\xi) = \Bla$ if $\xi = \{ e \}\subset \Bk^*$ and 
is equal to zero otherwise. 
We know that the set of $H^F$-orbits in $\CX\uni^F$ is parametrized 
by $\CP_{n,2}$.  We note that 
\par\medskip\noindent
(6.3.1) \ The set of $H^F$-orbits in $\CX^F$ is parametrized by 
$\F_{n,q}$.   
\par\medskip\noindent
In fact the parametrization is done, first  by parameterizing 
semisimple $H^F$-orbits in $(G^{\io\th})^F$, and then  
by parameterizing $Z_H(s)^F$-orbits  
in $(Z_G(s)^{\io\th}\times V)^F\uni$ for a semisimple element 
$s \in (G^{\io\th})^F$ (see 2.2).  Since the parametrization of
semisimple $H^F$-orbits in $(G^{\io\th})^F$ is equivalent to
that of semisimple classes in $GL_n^F$ 
by [BKS, Lemma 2.3.4], (6.3.1) follows.  

\para{6.4.}
For a given pair $(T,\CE)$,  where $T$ is a $\th$-stable maximal torus 
of $G$ contained in a $\th$-stable Borel subgroup, and 
$\CE$ is a tame local system on $T^{\io\th}$, 
we denote by $\wh\CX_{(T,\CE)}$ the set
of isomorphism classes of simple perverse sheaves on $\CX$ 
appearing in the decomposition of $K_{T,\CE}$.  
Recall that the set $\wh\CX$ of (isomorphism classes of) 
character sheaves on $\CX$ is a union of $\wh\CX_{(T,\CE)}$ 
for a various choice of $(T,\CE)$ (see [SS, 4.1]).
We denote by $\wh\CX^F$ the set of character sheaves $A$ such that
$F^*A \simeq A$. For $A \in \wh\CX^F$, we fix an isomorphism 
$\vf_A: F^*A \isom A$, and consider the characteristic function 
$\x_{A, \vf_A}$ on $\CX^F$.  Thus $\x_{A, \vf_A}$ is an $H^F$-invariant 
function on $\CX^F$, which depends only on $A$ up to scalar.
\par
Let $\ZC_q(\CX)$ be the $\Ql$-space of $H^F$-invariant 
functions on $\CX^F$. 
We have the following proposition.

\begin{prop}  
\begin{enumerate}
\item  Assume that $(T,\CE)$ and $(T',\CE')$ are not 
$H$-conjugate.  Then $\wh\CX_{(T,\CE)}$ and 
$\wh\CX_{(T',\CE')}$ are disjoint.
\item
$\{ \x_{A,\vf_A} \mid A \in \wh\CX^F\}$ gives rise to 
a basis of $\ZC_q(\CX)$. 
\end{enumerate}
\end{prop}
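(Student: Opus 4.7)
The plan is to establish (i) by recovering the data $(T,\CE)$ up to $H$-conjugacy from any simple summand of $K_{T,\CE}$, and then to deduce (ii) by combining (i) with the orthogonality relations of Proposition 6.2 and the character formula of Theorem 2.4.

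For (i), I would first use $H$-conjugacy over $\Bk$ to reduce to the case $T = T' = T_0$, since any two $\th$-stable maximal tori contained in $\th$-stable Borel subgroups are $H$-conjugate. Every simple summand $A$ of $K_{T_0,\CE}$ is of the form $\IC(\CX_m,\CL)[d_m]$ for a simple local system $\CL$ on $\CY_m^0$, and $\CL$ is a summand of $(\psi_m)_*\a_0^*\CE|_{\CY_m^0}$ via the decomposition recalled from [SS, Section 3.5]. The key point is that from $\CL$ together with its embedding into this direct image, one recovers $\CE$ up to the action of $\wt\CW$: on the regular stratum $\CY_n^0$, the map $\psi_n^{\bullet}$ (up to the vector-bundle factor $U_2^n$) is the \'etale Galois covering with group $\wt\CW$, so the isotypic decomposition under $\wt\CW$ matches simple summands with $\wt\CW$-orbits of pairs (tame local system $\CE$, irreducible representation of $\wt\CW_\CE$); different $\wt\CW$-orbits of $\CE$ then contribute disjoint sets of simple summands, proving (i).

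For (ii), linear independence follows from Proposition 6.2 combined with (i). Choosing representatives of $H$-conjugacy classes of pairs $(T,\CE)$ with $\CY_\CE\ne\emptyset$, the matrix of inner products $|H^F|\iv\sum_{z\in\CX^F}\x_{K_\r,\vf_\r}(z)\x_{K_{\r'},\vf_{\r'}}(z)$ is block-diagonal, with one block for each pair $\{\CE,\CE\ck\}$. On each block, (6.2.1) coincides, up to an invertible diagonal twist by $|T_{ww_1}^{\io\th,F}||T_{ww_1}^{\th,F}|\iv$, with the pairing on extended characters of $\wt\CW_\CE\ltimes\langle\g\rangle$, which is nondegenerate by the orthogonality relations for extended characters (cf. [L3, 10.3.1]). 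Part (i) then guarantees that these characteristic functions come from genuinely distinct simple perverse sheaves, so $\{\x_{A,\vf_A}:A\in\wh\CX^F\}$ is linearly independent.

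For spanning, I would use the character formula of Theorem 2.4 and induct on $\dim G$. The character formula writes $\x_{T,\CE}(x,v)$ as a sum over $F$-stable semisimple classes in $(G^{\io\th})^F$, weighted by a linear character of $T^{\io\th,F}$ and values of Green functions on centralizers $Z_G(s)$. Fourier inversion over $\vth\in(T^{\io\th,F})\wg$ then expresses every function of the form ``characteristic function of a semisimple $H^F$-class times a Green function on $Z_G(s)$'' in the span of the $\x_{T,\CE}$'s, and hence, via (6.1.2), in the span of the $\x_{K_\r,\vf_\r}$'s. Induction on $\dim G$ for non-central $s$, together with the central case $s\in Z(G)^{\io\th,F}$---for which Theorem 5.7 provides a unitriangular transition between $\{\x_{A_{\Bla},\vf_{\Bla}}\}_{\Bla\in\CP_{n,2}}$ and the basis of $H^F$-orbit indicators $\{Y_{\Bmu}\}$ of $\ZC_q(\CX\uni)$---then yields spanning. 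The hard part will be the descent step in (i), namely extracting $\CE$ up to $\wt\CW$-orbit cleanly from the local-system data of a single summand of $K_{T_0,\CE}$; a secondary bookkeeping issue is matching $H^F$-conjugacy with $Z_H(s)^F$-conjugacy when inverting the character formula in the spanning argument.
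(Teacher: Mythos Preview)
Your linear-independence step contains an error. Formula (6.2.1) is a \emph{weighted} inner product of extended characters, with weights $c_w = |T_{ww_1}^{\io\th,F}||T_{ww_1}^{\th,F}|^{-1}$ that depend on the summation variable $w$, not on $\r$ or $\r'$; this is not a diagonal twist of the character-pairing matrix, and nondegeneracy of the block does not follow from orthogonality of extended characters alone. The paper handles this by a degree-in-$q$ argument: one writes each entry as $R(q)^{-1}\sum_{w} R_w(q)\x_{\wt\r}(w\g)\x_{\wt\r'}(w\g)$ with all $R_w(t)$ monic of the same degree; the leading part in $q$ of $R(q)Z$ is then exactly the standard orthogonality matrix, so the diagonal entries have strictly larger $q$-degree than the off-diagonal ones and $\det Z\ne 0$.

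Your routes to (i) and to spanning also diverge from the paper, and both carry extra work or gaps. For (i), your covering argument on $\CY_n^0$ only sees summands with $m=n$; a summand $\IC(\CX_m,\CL)[d_m]$ with $m<n$ is supported on $\CX_m$ and never meets $\CY_n^0$, so a separate analysis is needed on each stratum. The paper avoids this entirely by deducing (i) from linear independence: if two non-$H$-conjugate pairs shared a simple summand, one could arrange both to be $F$-stable (after enlarging $q$) and obtain a linear dependence among the $\x_{K_\r,\vf_\r}$. For spanning, the paper does not invert the character formula or induct on $\dim G$; it simply counts. Each $\CE$ with $\CY_\CE\ne\emptyset$ corresponds to an $F$-stable semisimple class $(s)$ in $G^{\io\th}$ with $\wt\CW_\CE$ isomorphic to the Weyl group of $Z_H(s)$, so the total number of functions $\x_{K_\r,\vf_\r}$ equals $|\F_{n,q}|=\dim\ZC_q(\CX)$, and linear independence then gives the basis and hence (ii) directly.
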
  

\begin{proof}
Let $\CE$ be a tame local system on $T_0^{\io\th}$ such that 
$\CY_{\CE} \ne \emptyset$. 
One can construct functions $\x_{K_{\r},\vf_{\r}}$ as in 6.1
for each $\r \in (\wt\CW_{\CE})\wg_{\ex}$ 
by decomposing $K_{T_{ww_1}, \CE_{ww_1}}$.
We show that 
\par\medskip\noindent
(6.5.1) \ The functions $\x_{K_{\r},\vf_{\r}}$ , where 
$\CE$ runs over tame local systems on $T^{\io\th}_0$ such that 
$\CY_{\CE} \ne \emptyset$, and $\r \in (\wt\CW_{\CE})\wg_{\ex}$, 
are linearly independent.
\par\medskip\noindent
We define a bilinear form on $\ZC_q(\CX)$ by 
\begin{equation*}
\lp f, h \rp = |H^F|\iv\sum_{z \in \CX^F}f(z)h(z)
\end{equation*}
for $f,h \in \ZC_q(\CX)$.   In order to prove (6.5.1), 
it is enough to show by Proposition 6.2, that the matrix 
$Z = (\lp \x_{K_{\r},\vf_{\r}}, \x_{K_{\r'},\vf_{\r'}}\rp)_{\r,\r'}$
is nondegenerate for a fixed $\CE, \CE'$ such that 
$\CE' = \CE^{\vee}$ 
(here $\r \in (\wt\CW_{\CE})\wg_{\ex}, 
\r' \in (\wt\CW_{\CE'})\wg_{\ex}$).
Note that for $w \in W_n$, $|T_w^{\th, F}|$ is the order of 
an $F$-stable  maximal torus in $Sp_{2n}$ corresponding to 
$w \in W_n$, and $|T_w^{\io\th, F}|$ is the order of an 
$F$-stable  maximal torus $S_{\bar w}$ of $GL_n$, where 
$\bar w$ is the image of $w$ under the map $W_n \to S_n$. Then 
again by Proposition 6.2, one can write
\begin{equation*}
\lp \x_{K_{\r},\vf_{\r}}, \x_{K_{\r'},\vf_{\r'}}\rp
   = R(q)\iv \sum_{w \in \wt\CW_{\CE}}R_w(q)
            \x_{\wt\r}(w\g)\x_{\wt\r'}(w\g'),
\end{equation*}   
where $R(t), R_w(t)$ are polynomials in $t$ such that 
$R_w(t)$ is a monic of the same degree for any $w$.
Hence by the orthogonality relations for extended irreducible characters
of $\wt\CW_{\CE}$, each entry of $R(q) Z$ is a polynomial in $q$, 
the diagonal entries have the same degree, and off diagonal entries
have strictly smaller degrees.  Thus $Z$ is nondegerate and 
(6.5.1) holds.   
\par
Now 
$\CE$ determines a tame local system $\CE_0$ on an $F$-split 
 maximal torus $S_0$ of $GL_n$, 
and so determines an $F$-stable semisimple class $(s_0)$ 
in the dual group $GL_n^* \simeq GL_n$. 
Thus it determines an $F$-stable semisimple class 
$(s)$ in $G^{\io\th}$.  The class $(s)$ has the property that 
$\wt\CW_{\CE}$ is isomorphic to the Weyl group of 
$Z_H(s)$.  It follows that the cardinality of functions in 
(6.5.1) coincides with the cardinality of $\F_{n,q}$.  
This implies that the functions in (6.5.1) gives a basis of 
$\ZC_q(\CX)$.  If the pair $(T,\CE)$ is $F$-stable, it determines 
a unique $\CE_0$ on $T_0^{\io\th}$ such that $\CY_{\CE_0} \ne \emptyset$, 
and this give a bijection between $\wh\CX^F_{(T,\CE)}$ and the set 
in (6.5.1) corresponding to $\CE_0$.  Hence (ii) and (i) follows 
from this. 

\end{proof}

\par\vspace{1cm}
\noindent
T. Shoji \\
Graduate School of Mathematics, Nagoya University \\ 
Chikusa-ku, Nagoya 464-8602, Japan  \\
E-mail: \verb|shoji@math.nagoya-u.ac.jp|
\par\bigskip\bigskip\noindent
K. Sorlin \\
L.A.M.F.A, CNRS UMR 7352, 
Universit\'e de Picardie-Jules Verne \\
33 rue Saint Leu, F-80039, Amiens, Cedex 1, France \\
E-mail : \verb|karine.sorlin@u-picardie.fr|

 \end{document}